\newcommand{\X}{\mathcal X}
\newcommand{\C}{\mathcal C}
\newcommand{\f}{\frac}
\newcommand{\R}{\mathbb R}
\newcommand{\p}{\partial}
\newcommand{\ep}{\varepsilon}
\newcommand {\eps}  {\varepsilon}
\DeclareMathOperator{\E}{{\mathbb E}}
\newcommand{\1}{{\mathchoice {\rm 1\mskip-4mu l} {\rm 1\mskip-4mu l}{\rm 1\mskip-4.5mu l} {\rm 1\mskip-5mu l}}}
\newtheorem{remark}{Remark}
\newtheorem{lemma}{Lemma}
\newtheorem{assumption}{Assumption}
\newtheorem{proposition}{Proposition}
\newtheorem{theorem}{Theorem}
\newcommand{\beq}{\begin{equation}}
\newcommand{\eeq}{\end{equation}}
\newcommand{\dst}{\displaystyle}
\newcommand{\diff}{\mathrm{d}}
\newcommand{\Dif}{{ D}}
\title{Scaling limits for a population model with growth, division and cross-diffusion}
\author{Marie Doumic\thanks{Inria and CMAP, team MERGE, IP Paris, École polytechnique, CNRS, 
91128 Palaiseau Cédex. Email : marie.doumic@inria.fr} \and Sophie Hecht\thanks{Sorbonne Universit{\'e}, CNRS, Universit\'{e} de Paris, Laboratoire Jacques-Louis Lions UMR7598, F-75005 Paris. 
Email~:~sophie.hecht@sorbonne-universite.fr} \and Marc Hoffmann\thanks{University Paris Dauphine-PSL and Institut Universitaire de France, CEREMADE, Place du Mar\'echal De Lattre de Tassigny, 75016 Paris, France. Email~:hoffmann@ceremade.dauphine.fr} \and Diane Peurichard\thanks{Inria Paris, team MUSCLEES, Sorbonne Universit{\'e}, CNRS, Universit\'{e} de Paris, Laboratoire Jacques-Louis Lions UMR7598, F-75005 Paris. 
Email : diane.a.peurichard@inria.fr}}
\date{\today}
\begin{document}

\maketitle

\begin{abstract}
    Originally motivated by the morphogenesis of bacterial microcolonies, the aim of this article is to explore models through different scales for a spatial population of interacting, growing and dividing particles. We start from a microscopic stochastic model, write the corresponding stochastic differential equation satisfied by the empirical measure, and rigorously derive its mesoscopic (mean-field) limit. Under smoothness and symmetry assumptions for the interaction kernel, we then obtain entropy estimates, which provide us with a localisation limit at the macroscopic level. Finally, we perform a thorough numerical study in order to compare the three modelling scales. 
\end{abstract}

\textbf{Mathematics Subject Classification (2020)}: 60K35, 60J80, 35Q92, 35K55, 35R09, 65C05, 65M08

\textbf{Keywords}: Interacting measure-valued processes, systems of particles, deterministic macroscopic approximation, aggregation equation, growth-fragmentation equation, cross-diffusion, nonlocal interactions, localisation limit, mathematical biology

\section{Introduction}

When describing a population of $N$ interacting elements -- such as particles, cells or individuals -- and assuming that the population is large, i.e. $N\to\infty,$ three spatial scales appear. First, at the individual or microscopic scale, a stochastic system of particles may be written, where each individual is characterised by  its characteristic traits, its spatial position and its movement. The interaction between them is regulated by attractive or repulsive forces and/or an external potential. Second, mean-field or kinetic equations correspond to a mesoscopic scale, where the number of particles tends to infinity while the interaction kernel remains nonlocal. Finally, localisation limits can be derived, leading to an aggregation equation / porous medium system, where the interaction range tends to zero. Notably, this third scale may reveal more adequate than the mesoscopic one to describe systems with short-range interactions, where each cell interacts only with a limited number of close neighbours. These two types of limits -- kinetic limits and localisation limits -- have attracted much attention in recent years, we refer to~\cite{golse2016dynamics,chaintron2022propagation,serfaty2024lectures} for recent monographs. In this article, we intend to go one step further by considering a population of cells growing and reproducing by fission, with the case of bacterial microcolony morphogenesis in mind~\cite{doumic2020purely}. The difficulties are twofold: first, growth and division make the system  non-conservative, since both the number of individuals and their total volume or mass change with time; second, the characteristic trait is a continuous variable, unlike the multispecies cases cited above, leading to a lack of compactness.

Let us consider a stochastic number $N_{t}$ of spherical individuals (particles/cells) characterised by their center of mass $(X_i)_{1 \leq i \leq N_t}\in {\R^d}^{N_t}$  and radius $(R_i)_{1 \leq i \leq  N_t} \in [0,R]^{N_t}$. The dynamics of the particle system is described by the following stochastic differential equation system:
\begin{align}
        \diff X_i  & = - \f{\lambda}{N}\sum\limits_{j=1}^{ N_t} \nabla_x K (R_i, R_j,X_i-X_j) \diff t +  \sqrt{2 \Dif} \diff B_{t}^i, \label{eq:transport} \\
        \diff R_i & = g(R_i) \diff t, \label{eq:growth}
\end{align}
where $K$ is the interaction potential, $ B_{t}^i$  are independent Brownian motions,  $\Dif \geq 0$ is the diffusion coefficient and $g(R)$ is the  growth rate of a particle of size $R$. In addition, a particle of size $r$ divides into two daughter cells of size $2^{-1/d}r$ (so that the total size is conserved upon division) with an instantaneous probability rate $\beta(r)$. Upon division, the two daughter cells are positioned according to
$X \pm \alpha 2^{-\f{1}{d}} r \times P(2\pi \theta),$ for some parameter $0 \leq \alpha < 1$, where $\theta \in [0,1]^{d-1}$ and $P(2\pi\theta)$ defines the spherical coordinates, which are  either uniformly randomly chosen in $[0,1]^{d-1}$ or according to a probability law $\kappa(\theta)$\footnote{For $d=2,$ we have $P(2\pi \theta)=(\cos (2\pi \theta),\sin(2\pi\theta))$, and for $d=3$ we have $P(2\pi\theta)=(\sin(2\pi \theta_1)\sin(2\pi\theta_2),\sin(2\pi\theta_1)\cos(2\pi\theta_2),\cos(2\pi\theta_1)).$}. This system is a generalisation of the classical and widely studied system of interacting particles of homogeneous and constant size \cite{oelschlager1990large,PHILIPOWSKI2007526,chaintron2022propagation}. Recently, some works have considered the heterogeneity of a particle population by considering several species interacting through potentials $K_{i,j}$ for $i$ and $j$ two populations \cite{chen2019rigorous}, $1\leq i,j\leq K$. Here we go a step further by considering heterogeneity as given by a continuous trait (here the size of the particle). To our knowledge, this  has not yet been considered in the literature. 

In the microscopic model \eqref{eq:transport}--\eqref{eq:growth} the interaction kernel is scaled by a constant $N$ which represents the order of magnitude of the total number of particles $N_t$ -- for instance, in the numerical study, we assume a deterministic initial number of particles $N_0$ and take $N=N_0$. Heuristically, this gives a total interaction strength of order one when $N$ tends to infinity.  Let us therefore consider the point measure\footnote{For a finite point measure $\mu$ on $\R_+ \times \R^d$ which has then representation $\mu(\diff r, \diff x) = \sum_{i = 1}^n \delta_{(r_i, x_i)}((\diff r, \diff x))$ for some $(r_i, x_i) \in \R_+ \times \R^d$, we use the classical notation $\langle \mu, \varphi\rangle = \sum_{i = 1}^n \varphi(r_i,x_i)$.}
\begin{equation}
\label{eq:mu}\mu_t^N(\diff r,\diff x) = N^{-1}\sum_{i = 1}^{\langle N\mu_t^N, {\bf 1}\rangle}\delta_{(R_i(t),X_i(t))}(\diff r,\diff x),
\end{equation}  
where the sum ranges from $1$ to $ N_t \coloneqq N \langle \mu_t^N, {\bf 1}\rangle$, which is finite if the initial number of cells $N \langle \mu_0^N, {\bf 1}\rangle$ at time $0$ is finite. At the limit $N\rightarrow +\infty$, if $\mu_0^N \to \mu_0^\infty$ in distribution, we prove  that the measure $\mu_t^N$ converges in law toward $\mu_t^\infty$  which is a solution in a weak sense of the equation
\begin{equation}\label{eq:croisfragdiff}
    \left\{\begin{array}{ll}
\f{\p}{\p t} \mu^\infty_t +\f{\p}{\p r}\left(g(r)\mu^\infty_t\right) - \lambda \nabla_x\cdot \left( \mu^\infty_t \nabla_x U_K[\mu^\infty_t]\right) +\beta(r)\mu^\infty_t { - \Dif \Delta_x \mu^\infty_t}
\\ \\  \qquad =\dst\int\limits_{[0,1]^{d-1}} 2^{1+\f{1}{d}}\beta(2^{\f{1}{d}}r) \kappa(\theta) \mu^\infty_t( 2^{\f{1}{d}}r,x \pm \alpha r { P(2\pi\theta)})\diff \theta,
\\ \\
\mu^\infty_{t=0}=\mu^\infty_0,\qquad  g(0)\mu_t^\infty (0,x)=0,
\end{array}
\right.
\end{equation}
where
\begin{equation} 
U_K[\mu](r,x) = \int_{\R_+ \times \R^d} 
{ K( r, r',x-x')}
\mu(\diff r',\diff x').
\end{equation}
The assumptions required and the exact weak convergence result are detailed in Section~\ref{sec:micro-meso} and Theorem~\ref{th: weak limit}. The proof follows the strategy developed in~\cite{fournier2004microscopic,tran2008large} for nonconservative systems of particles. The case of conservative size-homogeneous systems, i.e. when we have neither growth nor division or size structure, has been studied in~\cite{oelschlager1990large,PHILIPOWSKI2007526}. In the case without diffusion, we can also note that the point measure $\mu_t^N$ is an {\it exact} weak solution of~\eqref{eq:croisfragdiff} (with $D=g=\beta=0$), see~\cite[Sec. 1.5]{golse2016dynamics}. 
  
Equation \eqref{eq:croisfragdiff} represents the particle system on a mesoscopic scale. It is a mixture of two well-known equations: the aggregation-diffusion equation \cite{Bertozzi2011,Carrillo2019,di2018nonlinear,Giunta2021} and the growth-fragmentation equation \cite{BP,doumic2023individual}. Since it is not the main focus of this paper, the study of the properties of the new model equation~\eqref{eq:croisfragdiff} is limited to the observations made thanks to numerical simulations. However, it is important to note that this new equation is likely to reproduce interesting phenomena (sorting, blow-up, etc) and therefore it would be interesting to study its long-time behavior  in future work.

In the applications we have in mind, another scaling is important, namely the range of interaction between individuals, which is assumed to be very small compared to the size of the domain, and of the same order of magnitude as the average size of the cells. The aim is then to derive what is  called a {\it macroscopic} or a {\it localisation} limit,  where the interaction kernel converges toward a Dirac delta function in space, so that the interaction becomes local.
To consider this limit, we first write the equation in dimensionless variables, and then consider a scaling such that the limit system is given by
\begin{multline}\label{mactot}
 \partial_t u_0 + \partial_r (g(r) u_0) - 
 \nabla_x\cdot \Big( u_0 (t,r,x) \nabla_x \int_0^R   { \Gamma}(r,r') u_0(t,r',x)\diff r' \Big) - {\Dif} \Delta_x u_0 \\ \\
 = 2^{1+\f{1}{d}}\beta(2^{\f{1}{d}}r) u_0(t, 2^{\f{1}{d}}r,x) - \beta(r) u_0(t,r,x),
\end{multline}
where $u_0$ is the density of particles and $\Gamma(r,s)$ is the integral over space of the interaction potential $K(x,r,s)$ (we have taken $\lambda=1$ for simplicity, see Section~\ref{sec:scalin} for details about the scaling). The full localisation limit has been well studied in the case of equal-sized particles, without growth or fragmentation: the articles already cited~\cite{oelschlager1990large,PHILIPOWSKI2007526} do not carry out only the micro-meso but also the localisation limits, and then the vanishing diffusion limit in~\cite{PHILIPOWSKI2007526}. The article \cite{LMG} also considers the case of a single species without diffusion, and~\cite{carrillo2019blob,carrillo2024nonlocal} developed a gradient-flow approach. These latter studies consider the localisation limits in the context of  developing particle methods for approximating aggregation-diffusion equations. Some recent articles have extended the localisation limit to the case of a finite number of interacting species, see~\cite{ChenDausJungel_2018,JungelPortischZurek_2022} for cases with diffusion, and~\cite{burger2022porous,doumic2024multispecies} without, or still~\cite{david2024degenerate} for the inviscid limit in the case of two species. 

To our knowledge, the question that interests us, namely the localisation limit in  the case of a 'continuous' heterogeneity, modelled by the addition of a size variable, has not been addressed yet. However this limit raises a number of difficult questions. First, growth and division render the system non-conservative, preventing the use of energy estimates. Second, the size of the particles, being a continuous trait, leads to new difficulties in obtaining compactness estimates. For these reasons, we derive  the rigorous localisation limit in the case without growth and division, thus eliminating the first issue and focusing on the second one. The general case is formally derived and illustrated with numerical simulations.

Assuming no growth  ($g=0$) and no fragmentation ($\beta =0$), we denote $n_\ep$ the density of particles, and prove the weak convergence of the sequence $n_\ep$ to $n_0$ solution to the aggregation equation~\eqref{eq:n_0} in Theorem~\ref{TH1}. 
Similar to the method used in various studies \cite{ChenDausJungel_2018,jungel2016entropy,JungelPortischZurek_2022,LMG}, we use entropy dissipations to recover compactness estimates. As is often done~\cite{LMG,doumic2024multispecies,carrillo2019blob,carrillo2024nonlocal}, we make the hypothesis that the kernel function is
an auto-convolution, i.e. there exists $\rho $ such that, with $\check{\rho}(r,x):=\rho(r,-x),$ we have
\beq 
K(r,r',x) = [ \check{\rho}(r,\cdot) *_x \rho(r',\cdot)](x)= \int_{\R^d} \check{\rho}(r,y) \rho(r',x-y)\diff y.
 \label{eq:as3}
\eeq
Without size dependence, this hypothesis can be found throughout the literature \cite{oelschlager1990large,LMG,carrillo2024nonlocal,doumic2024multispecies} since it allows to recover  classical a priori estimates from the entropies $\int n \ln n$ and $\int n \, K\ast_x n$ (these classical entropies are generalised to our system), sometimes called  Shannon-type and Rao-type entropies, respectively~\cite{JungelPortischZurek_2022}. This hypothesis, together with the diffusion term $D>0$, provides us with estimates that  lead to a weak convergence of $n_\ep$. Moreover, considering that $\rho \in  H^1([0,R];L^1(\R^d))$ allows us to recover the compactness for $U_{K_\eps}[n_\eps]$, which allows us to conclude. Note also that both the micro-meso and meso-macro limits consider a smoothness hypothesis on the interaction potential, which allows us to avoid blow-up --  much progress has been made recently to derive limits for non-smooth interaction kernels~\cite{serfaty2024lectures}.

\begin{remark}
For example, the hypothesis~\eqref{eq:as3} on the interaction potential is satisfied by the gaussian kernel
\[
K(r,r',x)= \frac{\gamma(r)\gamma(r')}{(2\pi(r^2+r'^2))^{d/2}} \exp \Big( -\f{|x|^2}{2(r^2+r'^2)} \Big),
\]
with $\gamma \in H^1(\R_+)\cap L^\infty(\R_+)$. We have
\[ \begin{aligned}
K(r,r',x)  = \gamma(r)\gamma(r')  \int_{\R^d} \frac{e^{ -\f{|x-y|^2}{2r^2} }}{(2\pi r^2)^{d/2}}  \frac{e^{ -\f{|y|^2}{2r'^2} }}{(2\pi r'^2)^{d/2}} \diff y = [ \check{\rho}(r,\cdot) *_x \rho(r',\cdot)](x),
\end{aligned}
\]
with $\rho(r,x)= \check{\rho}(r,x)=\frac{\gamma(r)}{(2\pi r^2)^{d/2}} \exp \big( -\f{|x|^2}{2r^2} \big) $. This example is  implemented numerically in Section~\ref{section_numerics}. We could also generalize it to the convolution of two Gaussians with variance defined as a function of $r,$ replacing $r^2$ by some $\sigma^2(r).$
\end{remark}

{


{Finally, since the localisation limit of the general mesoscopic model (with growth and fragmentation) is only formally derived, we provide a thorough numerical study to explore the link between the three modelling scales. The microscopic model is numerically discretized with classical methods (explicit Euler scheme), and the meso- and macro-models are discretized with finite-volume schemes with upwind fluxes \cite{Bailo2020} with special attention to the fragmentation terms. Using appropriate observables, we perform a quantitative comparison between the three models, focusing on the role of the number of particles in the microscopic model. We study three different settings (with growth and without fragmentation, with fragmentation and without growth, and with both), which allow us  to study precisely the role of each phenomenon at the different scales. In all cases, we obtain a good qualitative agreement between the three models, both for the spatial distribution and the size distribution at least in early times, and we  show quantitatively that the micro-meso agreement improves as the number of particles $N$ increases in each case. However, separating the three cases allows us to better understand the role of each phenomenon in the convergence of one scale to the other. Indeed, we first observe that the relative $L^1$-error between the micro and meso densities increases with time in the case with growth alone while it remains constant in the case of fragmentation alone, suggesting that the fragmentation process leads to a longer agreement between the micro and meso models compared to the growth process alone. This highlights the key role of repulsive interactions in the agreement between the micro and meso models. In fact, the mesoscopic model is obtained in the limit of an infinite number of particles, and therefore of interactions. By creating particles, the fragmentation process keeps the number of interactions large, whereas  with the growth process alone, the number of interactions only decreases with time as the particles move further apart due to repulsion. These results are reminiscent of several works in the literature showing that when considering micro to meso limits in interacting particle systems, low density regimes are better captured by a large number of particles \cite{Motsch2017, Degond2022}. 

Another major observation is the good quantitative agreement between the meso and macro models even when the localisation scaling parameter is not small. In all cases, the meso and macro models remain close and the relative $L^1$-error between the two densities even decreases with time. This enables to highlight the fact that we are in a  regime where linear diffusion dominates the non-local effects due to the repulsive interactions. In the appendix, we document the role of nonlinear diffusion by considering smaller linear diffusion coefficients.

Finally, we show that when both phenomena (growth and fragmentation) are combined, the meso and macro models produce mass faster than the microscopic dynamics and as a result, the agreement between the micro and meso and macro models is  observed only for early times of the simulations. We interpret this again by the fact that the mesoscopic and macroscopic models are obtained in the limit of infinite number of particles while the microscopic simulations are done with a finite number of particles, for which errors in the initial condition are amplified by the growth-fragmentation process. Similar observations have been made for systems with short-range repulsion and cell division in~\cite{Motsch2017}.

}
}

The organisation of this paper is schematised in Fig.\ref{fig_schema}. In Section~\ref{sec:micro-meso}, we introduce the stochastic microscopic system (Section~\ref{sec:micro}) and  rigorously derive its mean-field limit in the general case (Section~\ref{sec:meanfield}), including growth, division and interaction (Theorem~\ref{th: weak limit}). In Section~\ref{sec:meso-macro}, Theorem~\ref{TH1}, we perform a dimensional analysis (Section~\ref{sec:dim}) and introduce the scaling that leads to the new mesoscopic model (Section~\ref{sec:scalin}). We then establish the localisation limit in the case  without growth and division (Section~\ref{Sec3.3}) -- this could also be thought of as a local-in-time limit, for systems with slow growth and reproduction and fast interaction. Finally in Section~\ref{section_numerics}, we carry out a thorough numerical study in order to compare the models and their sensitivity with respect to scaling parameters.

\begin{figure}[H]
    \centering
   \includegraphics[scale = 0.22]{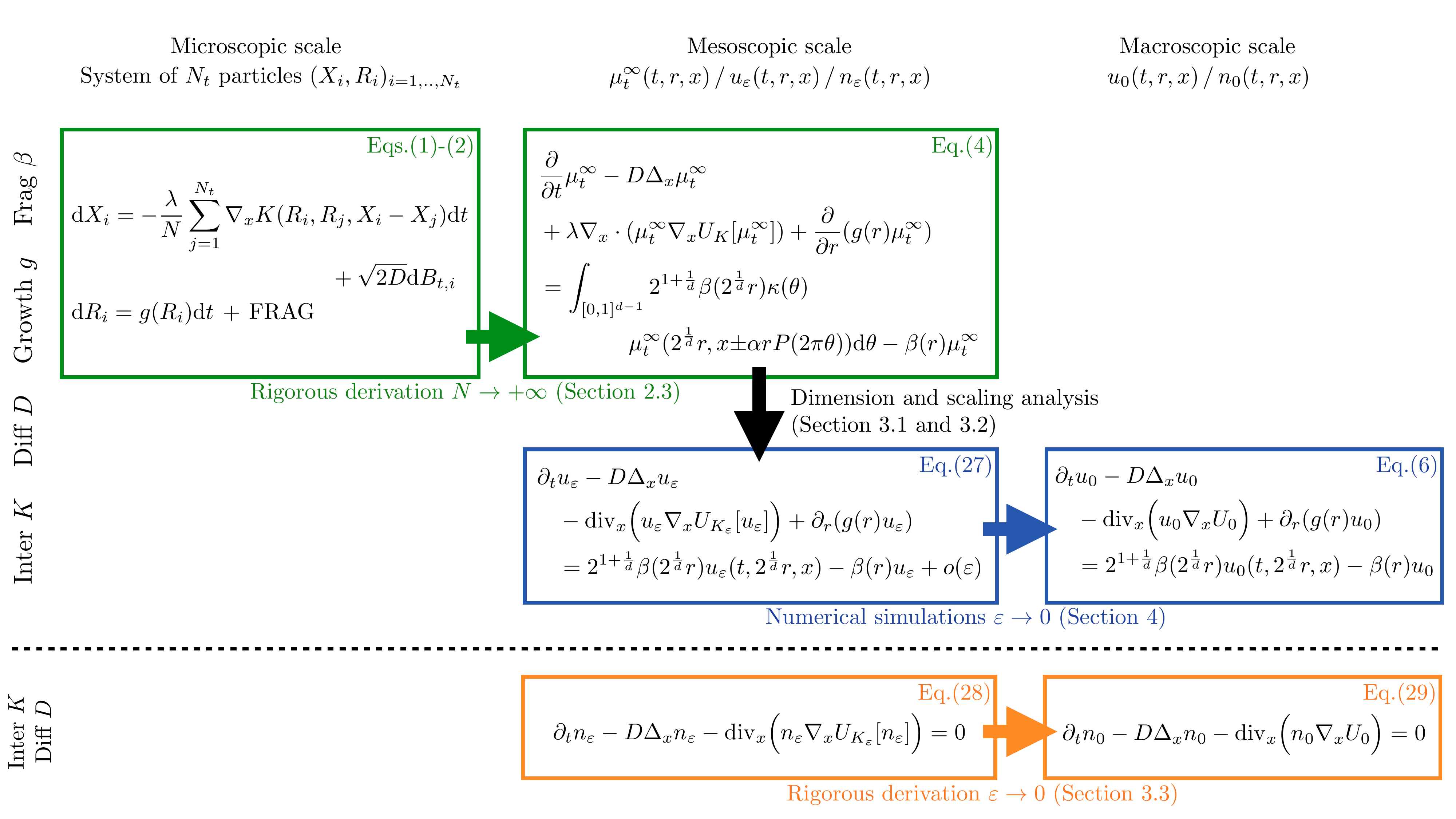}
    \caption{Schematic representation of our main models and results}
    \label{fig_schema}
\end{figure}

\section{A stochastic system of particles with growth, division and interaction}

\label{sec:micro-meso}

{As described in the introduction, we study a large population with a random number of individuals $N_t$, where $N_t$ is of the order of a constant $N \geq 1$. The individuals evolve according  to the spatial interaction and diffusion described by Equation~\eqref{eq:transport}, they grow according to Equation~\eqref{eq:growth}, and divide into two equal-sized daughters, with a rate $\beta(r)$ and a spatial distribution of the daughters  given by the law $\kappa(\theta)$.}
We first give rigorously define the mathematical objects involved in the description of the microscopic model in Section \ref{sec:micro}. In Section \ref{sec: existence micro}, we prove existence and uniqueness of our model as a solution to a non-conservative system of interacting stochastic differential equations with transport, jumps and diffusion. This part is classical and follows \cite{fournier2004microscopic} and \cite{tran2008large}. However, we  need to be careful in some way in order to account for the 
nonlinear (and stochastic) evolution of the system between jumps.

\subsection{Stochastic model} \label{sec:micro}

At time $t$, the size- and space-structured cell population is described by the state (here the size and position) of the living cells, which we denote by
$$\left(\left(r_1(t),x_1(t)\right),\left(r_2(t),x_2(t)\right),\ldots, \left(r_i(t),x_i(t)\right), \ldots \right).$$
We encode this information into the rescaled point measure $\mu_t^N$ introduced in Equation~\eqref{eq:mu}. Abusing the notation slightly, we write $ (r_i(\mu_t^N),x_i(\mu_t^N))=(r_i(t),x_i(t))$.
We denote the state  space of sizes and positions by $\mathcal X =  (0,\infty)\times\R^{ d} $, and the space of finite point measures with values in $\mathcal X$ by $\mathcal M = \mathcal M(\mathcal X)$ . The measure $(N\mu_t^N)_{t \geq 0}$ can be viewed as a random variable taking value in  $\mathcal T \subset \mathbb D([0,\infty), \mathcal M)$, where $ \mathbb D([0,\infty), \mathcal M)$ denotes the set of c\`adl\`ag functions from $[0,\infty)$ with values in the set of non-negative measures on the (closure) of the state space of size and position.  This set $\mathcal T$ is defined as piecewise  continuous finite point measures: $\mu \in \mathcal T$ if $\mu_t$ is a finite point measure and there exists $0 =t_0 < t_1 < \ldots$ with $\lim_n t_n = \infty$ such that $\mu_t $  is continuous for every $t \in [t_i, t_{i+1})$. In particular, this allows us to uniquely define $\mu_{t-}$.\\

We have a complete description of $\mu^N = (\mu_t^N)_{t \geq 0}$ by means of a family of independent Poisson random measures $(M_i(\diff s,\diff \theta, \diff u))_{i \geq 1}$  with intensities $\diff s\otimes \diff \theta \otimes \diff u$ on $\R_+ \times [0,1]^{d-1} \times \R_+$, { and independent Brownian motions $((B_t^i)_{t \geq 0})_{i \geq 1}$, all defined simultaneously} on a sufficiently rich  filtered probability space $(\Omega, \mathcal F, (\mathcal F_t)_{t \geq 0}, \mathbb P)$.\\

It is given by the following stochastic differential equation, written in a weak sense on  test functions $\varphi(t,r,x)$ via the scalar product defined by
\[\langle \mu_t^N, \varphi(t,\cdot, \cdot)\rangle \coloneqq \f{1}{N}\sum_{i = 1}^{N_t}\varphi(t, x_i(\mu_t^N), r_i(\mu_t^N)),\qquad N_t \coloneqq \langle N\mu_t^N, {\bf 1}\rangle,\] as
\begin{equation}\label{eq:stoch:r0}\begin{array}{ll}
& \langle \mu_t^N,\varphi (t,\cdot,\cdot)\rangle   = \langle \mu_0^N,\varphi (0,\cdot,\cdot)\rangle \\ \\
 &+\dst\int\limits_0^t N^{-1}\sum\limits_{i=1}^{\langle N\mu_{s-}^N, {\bf 1}\rangle}{ \int_{[0,1]^{d-1}}}\int_0^\infty\Bigl(2\varphi\big(s,2^{-\f{1}{d}} r_i(\mu_{s-}^N),x_i(\mu_{s-}^N)\pm \alpha 2^{-\f{1}{d}} {r_i(\mu_{s-}^N)} {P(2\pi\theta)}\big) \Bigr.
 \\ \\
 & \Bigl.- \varphi\big(s,r_i(\mu_{s-}^N),x_i(\mu_{s^-}^N)\big)\Bigr) 
\\ \\ &  \times{\1}_{\left\{u \leq \beta\left(r_i(\mu_{s-})\right)\kappa(\theta)\right\}} 
M_i(\diff s,\diff \theta,\diff u) {+ \sqrt{2\Dif} N^{-1}\int_0^t \sum\limits_{i  = 1}^{\langle N\mu_{s-}^N, {\bf 1}\rangle} \nabla_x\varphi(s, r_i(\mu_s^N),x_i(\mu_s^N))\diff B_s^i}\\ \\
& + \int\limits_0^t \big\langle \f{\p}{\p s}\varphi(s,\cdot,\cdot)  + g(r)\f{\p}{\p r}\varphi(s,\cdot,\cdot)  { - \lambda \nabla_x U_K}[\mu_{s^-}^N]\cdot \nabla_x \varphi(s,\cdot,\cdot){ + {  \Dif}\Delta_x \varphi(s,\cdot,\cdot)}, \mu_{s}^N\big\rangle \diff s,
\end{array}
\end{equation}
where
\begin{equation} \label{eq: def noyau}
{U_K}[\mu](r,x) = { \int_{\mathcal X} }
{ K( r, r',x-x')}
\mu(\diff r',\diff x').
\end{equation}
In \eqref{eq:stoch:r0},  we have denoted 
\begin{equation}\label{def:notation1}
2f(y\pm x) \coloneqq f(y+x)+f(y-x)
\end{equation}

\subsection{Existence and uniqueness of \eqref{eq:stoch:r0}} \label{sec: existence micro}

\begin{assumption} \label{hyp: basic} We have
\begin{itemize}
\item (Division rate) $\beta: (0,\infty) \rightarrow \R_+$ is bounded.
\item (Offspring dissemination) $\kappa: [0,1]^{d-1} \rightarrow \R_+$  is a bounded probability density function such that if $P(2\pi \theta_1)=-P(2\pi \theta_2)$ then $\kappa(\theta_1) = \kappa(\theta_2)$ (radial symmetry).
\item (Growth rate) $g: \R_+ \rightarrow \R_+$ is Lipschitz continuous.
\item (Interaction) {$\nabla_x K:  \R_+ \times \R_+\times \R^d $ is continuous and $x \mapsto \nabla_x K(r,r',x)$ is Lipschitz continuous, locally uniformly in $r,r'$.} 
{ \item (Diffusion) Let $D\geq 0$ be the (possibly zero) diffusion coefficient.}
\end{itemize}
\end{assumption}

\begin{theorem} \label{prop: existunicit}
Work under Assumption \ref{hyp: basic}. 
If the $\mathcal F_0$-measurable  finite point  {random} measure $N\mu_0^{N}$ is such that
$\mathbb P(\exists i \neq j, x_i(\mu_0^N) = x_{j}(\mu_0^N))=0$,
and 
$${ \E}\big[\langle N\mu_0^N, {\bf 1}\rangle^p\big] <\infty,\;\;\text{for some}\;\;p\geq 1,$$
then there exists a unique process $(\mu_t^N)_{t \geq 0}$ solution to 
 \eqref{eq:stoch:r0}. Moreover, for every $t>0$, we have 
\begin{equation} \label{eq: moment esti basic}
{ \E}\big[\sup_{0 \leq s \leq t}\langle N\mu_s^N, {\bf 1}\rangle^p\big] <\infty.
\end{equation}
\end{theorem}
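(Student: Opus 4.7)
The plan is to construct the solution pathwise by induction on the successive division times, following the standard strategy of \cite{fournier2004microscopic,tran2008large} for non-conservative particle systems, and adapted here to handle the continuous size variable and the Brownian diffusion between jumps. The process $(N\mu_t^N)_{t\ge 0}$ is a finite particle system whose cardinality $N_t$ increases by one at each division, while between divisions the particles evolve according to the coupled SDE \eqref{eq:transport}--\eqref{eq:growth}. Uniqueness will follow pathwise from the uniqueness of each of these ingredients, given the Poisson measures $M_i$ and Brownian motions $B^i$.

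First, I would fix the integer $n\ge 1$ and show that, starting from any configuration of $n$ particles, the coupled system \eqref{eq:transport}--\eqref{eq:growth} has a unique strong solution on any interval where no division occurs. This is a classical Lipschitz SDE in $\R^{n(d+1)}$: the growth drift $g(R_i)$ is Lipschitz by assumption, the interaction drift $-\frac{\lambda}{N}\sum_j \nabla_x K(R_i,R_j,X_i-X_j)$ is globally Lipschitz in $(X_i)$ for fixed radii and locally Lipschitz in $(R_i)$ on any compact, and the diffusion coefficient $\sqrt{2\Dif}$ is constant. Since radii stay bounded on bounded time intervals by the Lipschitz bound on $g$, standard SDE theory yields strong existence and uniqueness (up to the next jump).

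Next, I would define inductively the jump times $T_0=0<T_1<T_2<\cdots$ using the Poisson random measures $M_i$: given the solution on $[0,T_k]$, each of the $N_{T_k}$ particles carries an instantaneous division rate $\beta(R_i)\le \|\beta\|_\infty$, so $T_{k+1}-T_k$ is stochastically dominated by an exponential of parameter $\|\beta\|_\infty N_{T_k}$. At time $T_{k+1}$ I read the triple $(i,\theta,u)$ selected by $M_i$, replace the dividing particle by its two daughters according to the prescribed rule, and restart the between-jump SDE with $n=N_{T_{k+1}}=N_{T_k}+1$ particles. This yields a well-defined process on $[0,T_\infty)$ where $T_\infty:=\lim_k T_k$.

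The main (and standard) obstacle is to rule out explosion and to establish the moment bound \eqref{eq: moment esti basic}. Applying \eqref{eq:stoch:r0} with test function $\varphi\equiv 1$, only the jump term contributes and one obtains
\[
\langle N\mu_t^N,\mathbf{1}\rangle = \langle N\mu_0^N,\mathbf{1}\rangle + \int_0^t\!\!\int\!\!\int \mathbf{1}_{\{u\le \beta(r_i)\kappa(\theta)\}}\,M_i(\diff s,\diff\theta,\diff u),
\]
so that $(N_t)$ is stochastically dominated by a Yule-type process of per-particle rate $\|\beta\|_\infty$. Compensating the Poisson measure and taking the $p$-th power, one gets
\[
\E\bigl[\sup_{s\le t\wedge T_k}N_s^p\bigr] \le C_p\,\E[N_0^p] + C_p\|\beta\|_\infty\int_0^t \E\bigl[\sup_{u\le s\wedge T_k}N_u^p\bigr]\diff s,
\]
via Burkholder-Davis-Gundy on the martingale part and elementary estimates on the compensator, and Gr\"onwall yields a bound independent of $k$. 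Letting $k\to\infty$ gives $T_\infty=\infty$ a.s.\ together with \eqref{eq: moment esti basic}. Uniqueness is then immediate since two solutions driven by the same $(M_i,B^i)$ must agree on each $[T_k,T_{k+1}]$ by SDE uniqueness and at each $T_{k+1}$ by the deterministic division rule, the non-coincidence assumption on $\mu_0^N$ ensuring that the index $i$ singled out by $M_i$ at each jump is unambiguous.
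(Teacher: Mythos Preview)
Your proposal is correct and follows essentially the same strategy as the paper: a recursive pathwise construction on the jump intervals $[T_k,T_{k+1})$ using strong SDE well-posedness for the between-jump dynamics, together with a Gr\"onwall-type moment bound on $\langle N\mu_t^N,\mathbf{1}\rangle^p$ exploiting $\|\beta\|_\infty<\infty$ to rule out explosion. The only cosmetic differences are that the paper establishes the moment estimate first (via the localising sequence $\tau_k=\inf\{s:\langle N\mu_s^N,\mathbf{1}\rangle\ge k\}$ and a direct pathwise bound on the Poisson integral rather than BDG) and then deduces $T_\infty=\infty$, whereas you interleave construction and non-explosion; both orderings are standard.
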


\begin{proof}[Proof of Theorem \ref{prop: existunicit}] 
{Let us first show \eqref{eq: moment esti basic}. By \eqref{eq:stoch:r0} and neglecting negative jumps, we have
\begin{align*}
&\sup_{s \leq \min(t,\tau_k)} 
\langle N\mu_s^N, {\bf 1}\rangle  \leq \langle N\mu_0^N, {\bf 1}\rangle+2\int_0^{\min(t,\tau_k)}\sum\limits_{i=1}^{  \langle N\mu_{s-}^N, {\bf 1}\rangle}M_i(\diff s, [0,1]{ ^{d-1}} \times [0,\|\kappa\|_\infty\|\beta\|_\infty]),
\end{align*}
where $\tau_k = \inf\{s \geq 0, \langle N\mu_s^N, {\bf 1}\rangle \geq k\}$ is a localising sequence.
Taking $p$-power and expectation, we obtain 
\begin{align*}
\mathbb E\Big[\sup_{s \leq \min(t,\tau_k)}&\langle N\mu_s^N, {\bf 1}\rangle^p\Big]  \\
&
\leq 2^{p-1}\Big(
\mathbb E[\langle N\mu_0^N, {\bf 1}\rangle^p]+2^p\|\kappa \beta\|_\infty^p\mathbb E\Big[
\big(
\int_0^{\min(t,\tau_k)}\langle 
N\mu_{s^-}^N, {\bf 1}
\rangle \diff s
\big)^p
\Big]
\Big) 
\\
& \leq 2^{p-1}\Big(\mathbb E[\langle N\mu_0^N, {\bf 1}\rangle^p]+2^p\|\kappa\beta\|_\infty^pt^{p-1}\int_0^t \mathbb E\big[\sup_{v \leq \min(s,\tau_k)}\,\langle N\mu_v^N, {\bf 1}\rangle^p\big]\diff s\Big). 
\end{align*}
By Gr\" onwall's lemma, we conclude that for every $t \geq 0$, there exists $C_t >0$ independent of $k$ such that
\begin{equation} \label{eq: gron}
{ \mathbb E}\big[\sup_{s \leq \min(t,\tau_k)}\langle N\mu_s^N, {\bf 1}\rangle^p] \leq C_t.
\end{equation}
As a consequence $\sup_k\tau_k = \infty$  almost surely: by contradiction, assume that for some $t_0 < \infty$, we have $\mathbb P(\sup_k \tau_k \leq t_0) \geq \varepsilon_{t_0} >0$. Then,
from
$$\sup_{t \in [0,\min(t_0, \tau_k)]} \langle N\mu_s^N, {\bf 1}\rangle^p \geq k^p \1_{\{\tau_k \leq  t_0\}}$$
valid for every $k \geq 0$, we infer $\mathbb E[\sup_{s \in [0,\min(t_0, \tau_k)]} \langle N\mu_s^N, {\bf 1}\rangle^p] \geq k^p\varepsilon_{t_0}$ for every $k \geq 0$, which contradicts \eqref{eq: gron} for $t=t_0$. We then let $k \rightarrow \infty$ in \eqref{eq: gron} and obtain \eqref{eq: moment esti basic} by Fatou's lemma.\\

We now prove the existence of the process $(\mu_t^N)_{t \geq 0}$.  Let $T_0=0$. Assume that for some $s \geq 0$, $\langle \mu_s, {\bf 1}\rangle$ is finite (this is at least ensured for $s=0$ since $\langle \mu_0, {\bf 1}\rangle$ is finite almost-surely). Then the jump rate of the population at time $s$ is bounded by 
$$\langle N\mu_s^N, {\bf 1} \rangle \|\kappa \beta\|_\infty.$$ 
It is therefore possible to define almost-surely the increasing sequence of jump times $T_k$ of $N\mu_s^N$ and the process $N\mu_s^N$ can be constructed recursively by a simulation algorithm using acceptance-rejection using the Poisson measures and independent Brownian motions at hand for all times $s < T_\infty = \sup_kT_k$.\\

We only need to show that the evolution of $N\mu_s^N(\diff r,\diff x)$ is well defined on $[T_k, T_{k+1})$ for $k \geq 0$. We need some notation. Since the division mechanism is binary, we have exactly $(k+1)\langle N\mu_0^N, {\bf 1}\rangle  = \langle N\mu_{T_k}^N, {\bf 1}\rangle$ particles alive in the  time interval $[T_k, T_{k+1})$.

Consider now the ordinary differential equation
\begin{equation}\label{eq:growthcharac}\tfrac{d}{dt}R(t,t_0,r)=g(R(t,t_0,r)),\qquad R(t_0,t_0,r)=r,
\end{equation}
for arbitrary $t,t_0, r$.
Since $g$ is Lipschitz continuous by Assumption \ref{hyp: basic}, the map $(t,r) \mapsto R(t,t_0, r)$ is well defined. For $s \in [T_k, T_{k+1})$ and $i = 1,\ldots, \langle N\mu_{T_k}^N, {\bf 1} \rangle = (k+1)\langle N\mu_0^N, {\bf 1}\rangle$,  we then let
$$r_i(\mu_s^N) = 
\left\{
\begin{array}{ll}
R(s, T_k, r_i(\mu_{T_k}^N)) & \text{if}\;b(i) < T_{k}, \\ \\
R(s, T_k, 2^{-\f{1}{d}}r_{i^-}(\mu_{T_k-}^N) ) & \text{if}\;b(i) = T_{k},  
\end{array}
\right.$$
where $b(i)$ denotes the time of birth of the particle $i$. In particular, it corresponds to one of the jump times $T_j$, $j=1,\ldots, k$, and it concerns two newborn particles at each time $T_j$.
Next, conditional on $\mu_{T_k}^N$ and $T_k$, we construct a family of random processes $(X_s^i)_{T_k \leq s < T_{k+1}}$ solution to the system of stochastic differential equations, for $i\leq \langle N \mu_{T_k}^N,1\rangle,$
\begin{equation} \label{eq: eds}
X_{s}^i = x_i(\mu_{T_k}){-} { \lambda}N^{-1}\int_{T_k}^s\sum_{j = 1}^{<N\mu_{T_k}^N,1>} \nabla_x K (r_i(\mu_u^N), r_j(\mu_u),X_u^i-X_u^j)\diff u+{ \sqrt{2\Dif}} (B_{{ s}}^i-B_{T_k}^i),
\end{equation}
for $i=1,\ldots, \langle N\mu_{T_k}^N, {\bf 1}\rangle$ and for $T_k \leq { s} < T_{k+1}$ and independent Brownian motions $(B_t^i)_{t \geq 0}$. The Lipschitz property of $\nabla_x K$ implies that the $X^i$ are well defined and unique. Then, for $s \in [T_k, T_{k+1})$, we let

$$x_i(\mu_s^N) = 
\left\{
\begin{array}{ll}
X_{s}^i & \text{if}\;b(i) < T_{k} \text{ or } s>T_k,  \\ \\
X_s^{i-} \\
+u_i\alpha 2^{-\f{1}{d}}{r_{i^-}(\mu_{T_k-})}P(2\pi\theta) & \text{if}\;b(i) = T_{k} \text{ and } s=T_k,   
\end{array}
\right.$$
where $i-$ denotes the index of the mother cell, and 
where $u_j \in \{\pm 1\}$ with $u_i+u_{i'} = 0$ for the two sister particles $i$ and $i'$ that are such that $b(i) = b(i') = T_k$ and $\theta$ is a random variable with distribution $\kappa$ (and independent of the other stochastic components). We now obtain~\eqref{eq:stoch:r0}. 
}

It remains to show that $T_\infty = \infty$ almost surely. Assume on the contrary that for some $T<\infty$, we have $\mathbb P(T_\infty \leq T) >0$. We cannot have $\{T_\infty \leq T\} \subset \{\lim_k\langle \mu_{T_k},{\bf 1}\rangle = \infty\}$, since this would imply  $\{T_\infty \leq T\} \subset \{\sup_k\tau_k < T\}$ and we have a contradiction with $\sup_k\tau_k = \infty$ almost-surely. Hence there exists $M>0$ and $\mathcal A \subset \{T_\infty \leq T\}$ with $\mathbb P(\mathcal A)>0$ such that for every $k \geq 1$, we have $\langle \mu_{T_k},{\bf 1}\rangle < M$ on $\mathcal A$. Then 
we can always assume that the jump times $T_k$ are obtained as a subsequence of a Poisson counting process with intensity { $M \|\kappa \beta\|_\infty$}, for which $\sup_kT_k=\infty$, a new contradiction. The conclusion follows.

\end{proof}

\subsection{Large population limit}\label{sec:meanfield}

We need to make some more stringent assumptions.
\begin{assumption} \label{hyp: strong} We have:
\begin{itemize}
\item[(i)] (Division rate) $\beta: (0,\infty) \rightarrow \R_+$ is differentiable and $\|\beta'\|_\infty < \infty$.
\item[(ii)] (Offspring dissemination) $\kappa: [0,1]^{ d-1} \rightarrow \R_+$ is a bounded probability density function and { $\kappa$ is symmetrical with respect to the sphere, {\it i.e.} $\kappa(\theta_1+\f{1}{2},\theta_2,\cdots,\theta_{d-1}) = \kappa(\theta)$}.
\item[(iii)] (Growth rate) $g: \R_+ \rightarrow \R_+$ is differentiable.
\item[(iv)] (Interaction)  $\nabla_x K: \R^{d} \times \R_+ \times \R_+ \rightarrow \R_+$ is bounded and satisfies
$$\|\nabla_x K\|_\infty + \|D^2_{xx}K\|_\infty+\|\partial_{r'} \nabla_x K\|_\infty +\|\Delta_x (\nabla_x K) \|_\infty \leq C_K < \infty,$$
where the supremum is taken over $(r,r',x) \in  \R_+ \times \R_+\times \R^{d}$. 
\end{itemize}
\end{assumption}

Let $T > 0$ be a fixed and finite time horizon. 
We let $\mathcal M_F = \mathcal M_F(\mathcal X)$ denote the space of finite measures on $\mathcal X$ of positions and sizes. 

We have the following assumption on the initial condition:

\begin{assumption} \label{hyp: init final}
The initial condition { $\mu_0^N$} is a $\mathcal F_0$-measurable random positive measure taking values in $\mathcal M_F$ that satisfies, for a given constant $r_0>0$,
\begin{itemize}
\item[(v)] $\mathbb P(\exists i \neq j, x_i(\mu_0^N) = x_{j}(\mu_0^N))=0$, $\quad \mathbb P(\forall i, r_i(\mu_0^N) \leq r_0)=1$,
\item[(vi)] $\sup_{N \geq 1}\E\big[\langle { \mu_0^N}, {\bf 1}\rangle^3+\langle {         \mu_0^N},|x|+r\rangle\big] <\infty$, 
\item[(vii)] $\mu_0^N \rightarrow \mu_0^\infty \in \mathcal M_F$ (deterministic limit) { in distribution -- for test functions which are continuous and bounded --} as $N \rightarrow \infty$. 
\end{itemize} 
\end{assumption}

\begin{theorem} \label{th: weak limit}
Work under Assumptions \ref{hyp: strong} and \ref{hyp: init final}. Then for all $T>0$, the sequence $\mu^N= (\mu^N_t)_{0 \leq t \leq T}$ solution of \eqref{eq:stoch:r0} converges in law in $\mathbb D([0,T], \mathcal M_F)$ to a deterministic limit $\mu^\infty = (\mu_t^\infty )_{0 \leq t \leq T} \in \mathcal C([0,T], \mathcal M_F)$ as $N \rightarrow \infty$.\\ 
It is the unique measure-valued function 
that satisfies $\sup_{0 \leq t \leq T}\langle \mu_t^\infty, {\bf 1}\rangle < \infty$ and, using the notation~\eqref{def:notation1}, that is solution of~\eqref{eq:croisfragdiff} in a weak sense, {\it i.e.} it satisfies
{ \begin{align}
\langle \mu^\infty_t,\varphi_t\rangle   & = \langle \mu^\infty_0,\varphi_0 \rangle \nonumber\\ 
& +\int_0^t \int_{\mathcal X \times [0,1]}\Big(2\varphi_s \big(2^{-\f{1}{d}}r,x\pm \alpha 2^{-\f{1}{d}}{r} { P(2\pi\theta)}\big) \nonumber\\ 
&\hphantom{kkkkkk} - \varphi_s\big(r,x\big)\Big)\beta(r)\kappa(\theta) \mu^\infty_s(\diff r,\diff x) 
\diff \theta \diff s \nonumber\\ 
&+ \int_0^t \langle \partial_s \varphi_s+g(r)\partial_r \varphi_s { - \lambda \nabla_x U_K}[\mu^\infty_s]\cdot \nabla_x \varphi_s {+{ \Dif} \Delta_x \varphi_s}, \mu^\infty_s \big\rangle \diff s \label{eq:strongtime}
\end{align}}
for any bounded test functions { $(t,r,x)\in [0,T]\times \mathcal X \mapsto \varphi_t(r,x)$ that are { continuously differentiable with bounded derivative in the time and $r$ variables}} and that are twice continuously differentiable in the $x$ variable with bounded second order derivatives. 
\end{theorem}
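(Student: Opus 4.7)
The plan is to follow the now-classical martingale problem approach for nonconservative measure-valued processes developed by Fournier--M\'el\'eard \cite{fournier2004microscopic} and Tran \cite{tran2008large}, here extended to accommodate both the continuous size trait and the Brownian diffusion. First, for any test function $\varphi_t(r,x)$ with the regularity prescribed in the statement, I would compensate the Poisson measures in \eqref{eq:stoch:r0} to write
$$\langle \mu_t^N,\varphi_t\rangle = \langle \mu_0^N,\varphi_0\rangle + A_t^N(\varphi) + M_t^N(\varphi),$$
where $A_t^N(\varphi)$ coincides with the right-hand side of \eqref{eq:strongtime} once $\mu^\infty$ is replaced by $\mu^N$, and $M_t^N(\varphi) = M_t^{N,\mathrm{jump}}(\varphi) + M_t^{N,\mathrm{diff}}(\varphi)$ is a c\`adl\`ag square-integrable martingale. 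A direct computation of its predictable bracket combined with the moment bound \eqref{eq: moment esti basic} of Theorem~\ref{prop: existunicit} and Assumption~\ref{hyp: strong} yields $\E[\sup_{t \leq T}|M_t^N(\varphi)|^2]=O(N^{-1})$, so the martingale part vanishes in the limit.

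Second, I would prove tightness of the laws of $(\mu^N)_{N \geq 1}$ in $\mathbb D([0,T],\mathcal M_F)$ endowed with the weak topology. By the M\'el\'eard--Roelly criterion, this reduces to tightness of each real-valued process $(\langle \mu_t^N,\varphi\rangle)_{0\leq t\leq T}$ for $\varphi$ ranging over a dense subset of $C_b(\X)$, which I verify via the Aldous--Rebolledo criterion applied to the semimartingale decomposition above. Compact containment at the level of $\mathcal M_F$ follows from a Gr\"onwall argument propagating the third moment and the $|x|+r$-moment of Assumption~\ref{hyp: init final}(vi) to $\sup_N\E[\sup_{t \leq T}(\langle\mu_t^N,\mathbf{1}\rangle^3+\langle \mu_t^N,|x|+r\rangle)]<\infty$, using the Lipschitz bound on $g$, the boundedness of $\beta\kappa$ and the moment estimate \eqref{eq: moment esti basic}. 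The modulus-of-continuity control on $A^N(\varphi)$ is uniform in $N$ by Assumption~\ref{hyp: strong}, and the martingale part is handled by the $O(N^{-1})$ bracket above.

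Third, I would identify any subsequential limit $\mu^\infty$ as a solution of \eqref{eq:strongtime}. Since the jumps of $\langle \mu_t^N, \varphi\rangle$ are uniformly of order $N^{-1}$, every limit point is supported on $\mathcal C([0,T],\mathcal M_F)$, and the vanishing of $M^N(\varphi)$ forces $\mu^\infty$ to be deterministic. The main obstacle, as I see it, is the passage to the limit in the nonlinear interaction functional $\mu \mapsto \langle \nabla_x U_K[\mu]\cdot\nabla_x\varphi_s,\mu\rangle$, which is quadratic in $\mu$; here I would use that by Assumption~\ref{hyp: strong}(iv) the integrand $\nabla_x K(r,r',x-x')\cdot \nabla_x \varphi_s(r,x)$ is bounded and jointly continuous on $\X^2$, so that the functional is continuous on $\mathcal M_F\times\mathcal M_F$ for the weak topology along uniformly mass-bounded sequences. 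The jump term passes to the limit thanks to the boundedness and continuity of $\beta\kappa$ together with the symmetry of $\kappa$ from Assumption~\ref{hyp: strong}(ii), while the transport, growth, and Laplacian terms are linear and present no difficulty. Finally, uniqueness of the weak solution with bounded mass is obtained by a Gr\"onwall argument on a bounded-Lipschitz dual distance between two candidate solutions, exploiting the Lipschitz regularity of $g$, $\beta$, $\nabla_x K$ and $D^2_{xx}K$ supplied by Assumption~\ref{hyp: strong}; combined with the tightness step, this upgrades convergence along subsequences to convergence of the whole sequence.
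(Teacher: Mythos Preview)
Your overall strategy mirrors the paper's: semimartingale decomposition, $O(N^{-1})$ vanishing of the martingale bracket, tightness via Aldous--Rebolledo and moment propagation, continuity of limit points from $O(N^{-1})$ jumps, and identification of the limit through continuity of the quadratic interaction functional. The gap is in your uniqueness argument.

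A Gr\"onwall estimate on the bounded-Lipschitz dual distance does not close when $D>0$: for a merely Lipschitz test function $\psi$, the term $\langle D\Delta_x\psi,\mu_s-\nu_s\rangle$ is not even defined. If you upgrade to $C^{1,2}$ test functions (as the paper does, working with the dual norm built on $\|\psi\|_\infty+\|\partial_r\psi\|_\infty+\|\nabla_x\psi\|_\infty+\|\Delta_x\psi\|_\infty$), you still face a loss of derivatives: $\Delta_x\psi$ is only continuous, not $C^{1,2}$, so $\langle D\Delta_x\psi,\mu_s-\nu_s\rangle$ cannot be bounded by $C\|\mu_s-\nu_s\|_{H_\infty^{-(1,2)}}$. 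The Lipschitz regularity of $g,\beta,\nabla_xK,D^2_{xx}K$ that you invoke is of no help here.

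The paper circumvents this by a duality trick you do not mention: instead of a fixed $\psi$, one takes as test function the solution $\varphi_s$ of the \emph{backward} linear transport--diffusion equation
\[
\partial_s\varphi_s + g(r)\partial_r\varphi_s - \lambda\nabla_xU_K[\mu_s]\cdot\nabla_x\varphi_s + D\Delta_x\varphi_s = 0,\qquad \varphi_t=\psi,
\]
and checks (under Assumption~\ref{hyp: strong}) that $\sup_{s\le t}\|\varphi_s\|_{\mathcal C^{1,2}_\infty}\leq C\|\psi\|_{\mathcal C^{1,2}_\infty}$. This choice makes the entire drift block (time derivative, growth, interaction with $\mu$, Laplacian) vanish identically from $\langle\mu_t-\nu_t,\varphi_t\rangle$, leaving only the jump term and the cross term $\langle(\nabla_xU_K[\mu_s]-\nabla_xU_K[\nu_s])\cdot\nabla_x\varphi_s,\nu_s\rangle$. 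Both of these \emph{do} close in $\|\cdot\|_{H_\infty^{-(1,2)}}$, and it is precisely for the latter that the higher-order bounds $\|\partial_{r'}\nabla_xK\|_\infty$ and $\|\Delta_x(\nabla_xK)\|_\infty$ of Assumption~\ref{hyp: strong}(iv) are needed. Without this device (or an equivalent smoothing/semigroup argument), the uniqueness step does not go through, and with it collapse both the determinism of the limit and the upgrade from subsequential to full convergence.
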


{ Note that after integrating by parts~\eqref{eq:strongtime}, $\mu^\infty$ is solution of the equation \eqref{eq:croisfragdiff}
written in a strong sense but to be understood in the weak sense of~\eqref{eq:strongtime}.}

\begin{remark} \label{rem: rzero}
Assumptions \ref{hyp: strong} (iii) and \ref{hyp: init final} (v) imply that for every $i$,  the size of the cell $r_i(\mu_t^N) \leq {R(T,0,r_0) }<\infty$ at all times $0 \leq t \leq T$. Therefore we may assume that { $\mathcal X = (0,R_0] \times \R^d$} { for a given $R_0>0$}. In the proof, we only need this assumption in {\it Step 5}.
\end{remark}

\begin{proof}
We follow the strategy of Theorem 5.3 in \cite{fournier2004microscopic}, see also Section 3 in \cite{tran2008large}.\\

\noindent {\it Step 1. Uniqueness.} 

{ We use a specific form of test functions, namely time-independent test functions  $\psi: \mathcal X \to \R$ continuously differentiable in $r$ and twice continuously differentiable in $x$. For such test functions, ~\eqref{eq:strongtime} becomes} 
\begin{align}
\langle \mu^\infty_t,\psi\rangle   & = \langle \mu^\infty_0,\psi \rangle \nonumber\\ 
& +\int_0^t \int_{\mathcal X \times [0,1]^{d-1}}\Big(2\psi \big(2^{-\f{1}{d}}{r},x{ \pm}\alpha 2^{-\f{1}{d}}{r} P(2\pi\theta)\big) - \psi\big(r,x\big)\Big)\beta(r)\kappa(\theta) \mu^\infty_s(\diff r,\diff x) 
\diff \theta \diff s \nonumber\\ 
&+ \int_0^t \langle g(r)\partial_r \psi { - \lambda \nabla_x U_{K}}[\mu^\infty_s]\cdot \nabla_x \psi {+{ \Dif} \Delta_x \psi}, \mu^\infty_s \big\rangle \diff s \label{eq:strong}.
\end{align} 
Let $\mu, \nu$ be two solutions of \eqref{eq:strongtime} satisfying  
$$\sup_{0 \leq t \leq T}\langle \mu_t+\nu_t, {\bf 1}\rangle < \infty.$$
We equip $\mathcal M_F$ with the $H^{-(1,2)}_\infty$ norm 
$${\|\mu \|_{H_\infty^{-(1,2)}}= \sup_{\|\psi\|_{\mathcal C_\infty^{1,2}} \leq 1}\int_{\mathcal X}\psi(r,x)\mu(\diff r,\diff x),}$$
where we define
\begin{equation} \label{eq: def norm strong}
\|\psi \|_{\mathcal C_\infty^{2,1}} = \|\psi \|_\infty+\|\nabla_x \psi\|_\infty+\|\partial_r \psi\|_\infty+\|\Delta_x \psi\|_\infty.
\end{equation}
Our aim is to obtain an inequality of the type
\begin{align}\label{ineq:Gron}
\|\mu_t-\nu_t\|_{H_\infty^{-{ (1,2)}}}  \leq C(T)  \|\mu_0-\nu_0\|_{H_\infty^{-{(1,2)}}},
\end{align}
which will imply uniqueness. To do so, let first
${ \varphi_t(r,x)}$ be a smooth function on  
$[0,T] \times {\mathcal X}$  such that
$$\sup_{t \in [0,T]}\|{\varphi_t}\|_{\mathcal C^{1,2}_\infty} < \infty.$$ 
We have, for fixed $t\in [0,T]$:
\begin{equation} \label{eq: fund decomp}
\langle \mu_t-\nu_t,{ \varphi_t} \rangle = I + II +III,
\end{equation}
with
\begin{align*}
I & =  \int_0^t \int_{\mathcal X \times [0,1]^{d-1}}\big(2\varphi_s\big(2^{-\f{1}{d}}{r}\big), x{\pm} 2^{-\f{1}{d}} \alpha r P(2\pi\theta) \big) \\
& \qquad - \varphi_s( r,x)\big)   \beta(r)\kappa(\theta) (\mu_s-\nu_s)(\diff r,\diff x) 
\diff \theta \diff s, \\ 
II & =  \int_0^t \langle \tfrac{\partial}{\partial s} \varphi_s+g(r)\tfrac{\partial}{\partial r} \varphi_s   { - \lambda \nabla_x U_K}[\mu_{s}]\cdot \nabla_x \varphi_s+ { \Dif}\Delta_x \varphi_s, \mu_s-\nu_s\rangle \diff s,  \\ 
III&  =  { - \lambda}\int_0^t  \left\langle \left( \nabla_x U_K[\mu_s] - \nabla_x U_K[{ \nu}_s]\right)\cdot \nabla_x \varphi_s, \nu_s \right\rangle \diff s,
\end{align*}
where we used the decomposition
\begin{align*}
& \int_0^t \big(\big\langle { \nabla_x U}_K[\mu_s]\cdot \nabla_x { \varphi_s}, \mu_s\rangle -{ \nabla_x U}_K[\nu_s]\cdot \nabla_x { \varphi_s}, \nu_s\big\rangle \big) \diff s \\
&= \int_0^t \langle { \nabla_x U}_K[\mu_{s}]\cdot \nabla_x { \varphi_s}, \mu_s-\nu_s \rangle \diff s +\int_0^t  \langle ({ \nabla_x U}_K[\mu_s] - { \nabla_x U}_K[\nu_s])\cdot \nabla_x { \varphi_s}, \nu_s \rangle \diff s.
\end{align*}
In order to get rid of the term $II,$ we use the following lemma.
\begin{lemma}
Consider the backward transport-diffusion equation (written in strong form) with terminal condition
\begin{equation} \label{eq: sol free transport term}
\left\{\begin{array}{ll}
\partial_s u_s(r,x) + g(r)\partial_ru_s(r,x)-{ \lambda \nabla_x U}_K[\mu_s] { \cdot} \nabla_x  u_s(r,x) +{ \Dif} \Delta_x u_s(r,x)= 0,\;\;0 \leq s \leq t\\ \\
u_t(r,x)=\psi(r,x),
\end{array}
\right.
\end{equation}
for $\psi\in \mathcal C_\infty^{1,2}$. Under Assumption \ref{hyp: strong}, for every $\mu_s\in {\mathcal C}([0,T],{\mathcal M}_F)$, it admits a classical solution $u$ that satisfies:
$$\sup_{s \in [0,T]}\|u_s\|_{\mathcal C_\infty^{1,2}} \leq C \|\psi\|_{\mathcal C_\infty^{1,2}},$$
where $C$ depends on $g,$ { $({ \nabla_x U}_K[\mu_s])_{0\leq s\leq T}$ and $\Dif.$}
\end{lemma}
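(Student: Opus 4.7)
I would prove the lemma via a Feynman--Kac representation of $u_\sigma(r,x)$ as an expectation along the stochastic flow associated with the transport and diffusion parts of the backward equation. Concretely, for fixed $(\sigma,r,x)\in[0,t]\times\mathcal X$, let $R^{\sigma,r}_s$ denote the solution of the autonomous ODE $\tfrac{d}{ds}R=g(R)$ with $R_\sigma=r$ (globally well-defined because $g$ is Lipschitz by Assumption \ref{hyp: basic}), and on a probability space carrying a Brownian motion $(B_s)$, introduce
\[
X^{\sigma,r,x}_s = x-\lambda\int_\sigma^s\nabla_xU_K[\mu_u]\bigl(R^{\sigma,r}_u,X^{\sigma,r,x}_u\bigr)\diff u+\sqrt{2\Dif}\,(B_s-B_\sigma),\qquad s\in[\sigma,T].
\]
Existence and uniqueness of this SDE follow from the Lipschitz continuity of $x\mapsto\nabla_xU_K[\mu_u]$, itself a consequence of $\|D^2_{xx}K\|_\infty\leq C_K$ in Assumption \ref{hyp: strong}(iv) combined with the finiteness of $\sup_{u\leq T}\langle\mu_u,{\bf 1}\rangle$. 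I then set $u_\sigma(r,x):=\E\bigl[\psi\bigl(R^{\sigma,r}_t,X^{\sigma,r,x}_t\bigr)\bigr]$; an application of It\^o's formula combined with the Markov property of $(R,X)$ shows that, once $u$ is known to be $\mathcal C^{1,2}$, it is a classical solution of \eqref{eq: sol free transport term} with terminal condition $u_t=\psi$.

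To establish simultaneously the regularity and the quantitative bound, I would differentiate the representation in the initial data $(r,x)$. The bound $\|u_\sigma\|_\infty\leq\|\psi\|_\infty$ is trivial. Since the $r$-dynamics is autonomous, $\partial_xR^{\sigma,r}_t=0$, so $\nabla_xu_\sigma=\E\bigl[(\nabla_x\psi)(R_t,X_t)\cdot\nabla_xX_t\bigr]$, where $\nabla_xX_t$ solves a linear SDE with coefficient $D^2_{xx}U_K[\mu_u]$, bounded by $C_K\sup_u\langle\mu_u,{\bf 1}\rangle$; Gr\"onwall yields $\sup_{\sigma,r,x}\E|\nabla_xX_t|^2\leq e^{CT}$, hence control of $\|\nabla_xu_\sigma\|_\infty$. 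For $\partial_ru_\sigma$, the chain rule gives
\[
\partial_ru_\sigma=\E\bigl[(\partial_r\psi)(R_t,X_t)\,\partial_rR_t+(\nabla_x\psi)(R_t,X_t)\cdot\partial_rX_t\bigr],
\]
with $\partial_rR_t=\exp\bigl(\int_\sigma^tg'(R_u)\diff u\bigr)$ uniformly bounded because $g$ is Lipschitz, and $\partial_rX_t$ solution of a linear SDE whose forcing involves a first $r$-derivative of $\nabla_xU_K[\mu_u]$, bounded thanks to the integrability of the relevant derivative of $\nabla_xK$ in Assumption \ref{hyp: strong}(iv). A second Gr\"onwall argument then bounds $\|\partial_ru_\sigma\|_\infty$ by $\|\psi\|_{\mathcal C^{1,2}_\infty}$.

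For the second-order bound $\|\Delta_xu_\sigma\|_\infty$, I differentiate the representation twice in $x$: this produces a contribution in $\nabla_x^2\psi$ paired with the tensor $\nabla_xX_t\otimes\nabla_xX_t$, plus a contribution in $\nabla_x\psi$ paired with the second-order sensitivity $\nabla_x^2X_t$. The latter satisfies a linear SDE whose forcing involves $\Delta_x(\nabla_xK)$, assumed bounded in Assumption \ref{hyp: strong}(iv); a final Gr\"onwall controls it. Combining the three estimates delivers the announced $\mathcal C^{1,2}_\infty$ bound, with a constant $C$ depending only on $g$, $\Dif$, and on $\bigl(\nabla_xU_K[\mu_s]\bigr)_{0\leq s\leq T}$ through $C_K$ and $\sup_{s\leq T}\langle\mu_s,{\bf 1}\rangle$.

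The main obstacle is technical rather than conceptual: rigorously justifying differentiation under the expectation, so that $u$ is a genuine $\mathcal C^{1,2}$ solution and not merely bounded. This proceeds in the spirit of Kunita's theorem on smoothness of stochastic flows: uniform $L^p$-moments for the flow Jacobians $\nabla_xX_t$, $\partial_rX_t$ and $\nabla_x^2X_t$ are obtained via Burkholder--Davis--Gundy together with the boundedness of the first and second derivatives of the drift built into Assumption \ref{hyp: strong}(iv), and dominated convergence then legitimises passing derivatives inside $\E[\cdot]$. The degeneracy of the diffusion in the $r$-direction is harmless, since $r$ enters only through the deterministic ODE $\dot R=g(R)$, whose smoothness in the initial datum is classical.
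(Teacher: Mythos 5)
Your argument is correct in substance, but it takes a genuinely different route from the paper. The paper disposes of this lemma in one line: after the time reversal $s'=t-s$, equation \eqref{eq: sol free transport term} becomes a standard linear drift--diffusion equation in $x$ with a pure transport in $r$ along the characteristics of $g$, and the $\mathcal C^{1,2}_\infty$ bound is quoted from classical parabolic theory. You instead construct the solution probabilistically, via the Feynman--Kac representation $u_\sigma(r,x)=\E[\psi(R_t,X_t)]$ along the deterministic size flow and the $x$-diffusion with drift $-\lambda\nabla_xU_K[\mu_u]$, and you obtain the quantitative estimate by differentiating the stochastic flow in $(r,x)$ and running Gr\"onwall on the Jacobian equations. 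This buys an explicit, self-contained derivation of the constant $C$ (through $C_K$, $\sup_s\langle\mu_s,{\bf 1}\rangle$, the Lipschitz constant of $g$ and $T$), at the price of the usual technical work on differentiating under the expectation, which you correctly identify and handle in the spirit of Kunita. Two small caveats, both of which the paper's appeal to ``standard theory'' shares rather than avoids: (i) your second-order step naturally controls the full Hessian of $u$ by the full Hessian of $\psi$ and consumes the full third-order tensor $D^3_{xxx}K$ in the forcing of $\nabla_x^2X$, whereas Assumption \ref{hyp: strong}(iv) literally lists only $\|\Delta_x(\nabla_xK)\|_\infty$ and the norm \eqref{eq: def norm strong} records only the Laplacian; since the admissible test functions in Theorem \ref{th: weak limit} have all second $x$-derivatives bounded, this is an imprecision of bookkeeping, not a failure of the argument; (ii) your bound on $\partial_rX_t$ uses the derivative of $\nabla_xK$ in its \emph{first} size argument, while the assumption writes $\partial_{r'}\nabla_xK$; this is harmless under the implicit symmetry of $K$ in $(r,r')$ (as in the Gaussian example), but is worth stating. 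With these points made explicit, your proof is a valid and more self-contained alternative to the paper's citation of linear parabolic regularity.
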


{ The proof is immediate, since the change of variable $s'=t-s$ leads to the standard linear drift-diffusion equation, with a pure backward transport in the $r$ variable.} 

Let us now take $\psi \in \C_\infty^{1,2}$ such that $\|
\psi\|_{\mathcal C_\infty^{1,2}} \leq 1.$ We choose $\varphi$ solution to~\eqref{eq: sol free transport term} as the test function in~\eqref{eq:strongtime}, so that we now have $II=0$ in~\eqref{eq: fund decomp}.\\

For the term $I,$ we use that the function
$$H_\varphi(s,x,r) = \int_{[0,1]^{d-1}}\big(2\varphi\big(s,2^{-\f{1}{d}}r, x{ \pm}2^{-\f{1}{d}} \alpha r P(2\pi\theta)\big)- \varphi(s, x,r)\big)\beta(r)\kappa(\theta) \diff \theta$$
satisfies
\begin{align*}
&\sup_{s \in [0,T]}\|H_\varphi(s,\cdot)\|_\infty \leq C \|\varphi\|_\infty\|\beta\|_\infty ,\\
&\sup_{s \in [0,T]}\| \partial_r H_\varphi(s,\cdot)\|_\infty \leq C \left(\|\partial_r \varphi \|_\infty\|\beta\|_\infty+\|\varphi \|_\infty\|\beta'\|_\infty\right),\\
&\sup_{s \in [0,T]}\| \nabla_xH_\varphi(s,\cdot)\|_\infty \leq C \|\nabla_x\varphi\|_\infty\|\beta\|_\infty,\\
&\sup_{s \in [0,T]}\|\Delta_xH_\varphi(s,\cdot)\|_\infty \leq C \|\Delta_x\varphi\|_\infty\|\beta\|_\infty,
\end{align*}
where $C>0$ only depends on $T$ and $d$. Therefore, we have
$$\big| I\big| \leq  C' \int_0^t \|\mu_s-\nu_s\|_{H_\infty^{-(1,2)}}\diff s$$
for some $C' = C''(d,T,\|\beta\|_\infty,\|\beta'\|_\infty)\sup_{{ s} \in [0,t]}\|\varphi({ s},\cdot)\|_{\mathcal C^{1,2}_\infty}\leq C''' \| \psi\|_{\mathcal C^{1,2}_\infty}< \infty$.\\ 
   
To bound the term $III$, we first use  Fubini's theorem,
\begin{align*}
&\langle ({ \nabla_x U_K} [\mu_{s}] - { \nabla_x U_K}[\nu_{s}])\cdot \nabla_x { \varphi_s}, \nu_s \rangle \\
&=-\int_{\X} \left(\int_\X \nabla_x K (r,r',x-x') \left(\mu_s(\diff r',\diff x')- \nu_s(\diff r',\diff x')\right) \right)\cdot \nabla_x \varphi_s (r,x) \nu_s(\diff r,\diff x) 
\\
&= \big\langle -\int_{\mathcal X}\nabla_x K(r,\cdot,x-\cdot)\nabla_x\,\varphi_s(r,x)\nu_s(\diff r,\diff x), \mu_s-\nu_s \big\rangle.
\end{align*}
Moreover, the function 
$$(r',x') \mapsto \widetilde H_\varphi(s,r',x') = -\int_{\mathcal X}\nabla_x K (r,r',x-x')\nabla_x\,\varphi_s(r,x)\nu_s(\diff r,\diff x)$$
satisfies 
\begin{align*}
&\sup_{s \in [0,t]}\|\widetilde H_\varphi(s,\cdot)\|_\infty \leq \|\nabla_x K\|_\infty \|\nabla_x\,\varphi\|_\infty \sup_{s \in [0,T]}\langle \nu_s, {\bf 1}\rangle\\
&\sup_{s \in [0,t]}\|\partial_{r'} \widetilde  H_\varphi(s,\cdot)\|_\infty \leq \|\partial_{r'} \nabla_x K\|_\infty\|\nabla_x\,\varphi\|_\infty \sup_{s \in [0,T]}\langle \nu_s, {\bf 1}\rangle\\
&\sup_{s \in [0,t]}\|\nabla_{x'} \widetilde H_\varphi(s,\cdot)\|_\infty \leq \|D^2_{xx} K\|_\infty \|\nabla_x\,\varphi\|_\infty \sup_{s \in [0,T]}\langle \nu_s, {\bf 1}\rangle,\\
&\sup_{s \in [0,t]}\|\Delta_x \widetilde H_\varphi(s,\cdot)\|_\infty \leq  \|\Delta_x (\nabla_x K)\|_\infty \|\nabla_x \,\varphi\|_\infty \sup_{s \in [0,T]}\langle \nu_s, {\bf 1}\rangle,
\end{align*}

therefore
$$\big|III\big| \leq C \int_0^t\|\mu_s-\nu_s\|_{H_\infty^{-(1,2)}}\diff s$$
with $C  = { C^{st}(\nabla_x K)} \sup_{s \in [0,{ t}]}\|\varphi\|_{\mathcal C^{1,2}_\infty}  \sup_{s \in [0,T]}\langle \nu_s, {\bf 1}\rangle$.
Putting $I$, $II=0$ and $III$ together  we obtain
\begin{align*}
\langle \mu_t-\nu_t,\varphi(t,\cdot) \rangle   =  \langle \mu_t-\nu_t, \psi \rangle   \leq C \int_0^t \|\xi_s-\zeta_s\|_{H_\infty^{-(1,2)}}\diff s.
\end{align*}
where $C$ is a constant depending on $T,\,d,\,K,\,\|\beta\|_\infty, \|\beta'\|_{\infty}$ and $\|\psi \|_{\mathcal C^{1,2}_\infty}.$  Taking the supremum over $|\psi|_{\mathcal C^{1,2}_\infty} \leq 1$ we obtain
\begin{align*}
\|\mu_t-\nu_t\|_{H_\infty^{-{(1,2)}}}  \leq C\int_0^t \|\mu_s-\nu_s\|_{H_\infty^{-(1,2)}}\diff s
\end{align*}
for every $0 \leq t \leq T$. We obtain ~\eqref{ineq:Gron} by Gr\"onwall's lemma, and uniqueness follows.
\\

\noindent {\em Step 2. Moment estimate.} 

We next establish the moment estimate:
\begin{equation} \label{eq: moment esti}
\sup_{N \geq 1} \E^N[\sup_{0 \leq t \leq T}\langle \mu_t^N, {\bf 1}\rangle^3] < \infty.
\end{equation}
Indeed, we can reproduce line by line the beginning of the proof of Proposition \ref{prop: existunicit} with $p=3$ and in the estimate
\begin{align*}
&\sup_{s \leq \min(t,\tau_k)} 
\langle \mu_s^N, {\bf 1}\rangle^p  \\
&\leq 2^{p-1}\big(\langle \mu_0^N, {\bf 1}\rangle^p+\big(2N^{-1}\int_0^{\min(t,\tau_k)}\sum_{i \leq  \langle N\mu_{s-}^N, {\bf 1}\rangle}M_i(\diff s, [0,1]^{d-1} \times [0,\|\kappa\|_\infty\|\beta\|_\infty])\big)^p\big)
\end{align*}
to obtain 
$$
\sup_{N \geq 1}\E\big[\sup_{0 \leq t \leq T}\langle \mu_t^N, {\bf 1}\rangle^3\big] \leq C_T \sup_{N \geq 1}\E\big[\langle \mu_0^N, {\bf 1}\rangle^3\big] 
$$
and we obtain \eqref{eq: moment esti} by Assumption \ref{hyp: init final} {\it (vi)}.\\

\noindent {\em Step 3. Tightness.} 

We now show that the sequence $\mathrm{Law}(\mu^N)$  in $\mathcal P(\mathbb D([0,T], \mathcal M_F))$ of laws of $\mu^N$ is tight, when $\mathcal M_F$ is endowed with the vague topology {(i.e. for test functions that are continuous with compact support)}. The extension to the weak topology { (i.e. for test functions that are continuous and bounded)} is done in {\em Step 6} thanks to a classical argument. Again, by a classical result (see {\it e.g.} around Proposition 2 in \cite{sznitman1991topics}),  the sequence $\mathrm{Law}(\mu^N)$ is tight in $\mathcal P(\mathbb D([0,T], \mathcal M_F))$ if, for every { twice continuously differentiable (bounded) function} $\psi$ on $\mathcal X$, the sequence of the laws of  $\langle \mu^N, \psi\rangle$ is tight in $\mathbb D([0,T], \mathcal X)$.

{ Let us now consider a test function of the form $(s,r,x) \mapsto \varphi_s(r,x)$ which is smooth (differentiable in $s$, in $r$ and twice differentiable in $x$) and that we can consider for fixed $t$ as a function defined on $\mathcal X$. Setting, for $0 \leq s \leq T$,} 

\begin{equation} \label{eq: def Hpsi}
H_\varphi(s,r,x,\theta) = 
2\varphi_s\big(2^{-\f{1}{d}}r_i(\mu_{s-}^N),x_i(\mu_{s-}^N)\pm 2^{-\f{1}{d}}\alpha r_i(\mu_{s-}^N)  P(2\pi\theta)\big) - \varphi_s\big(r_i(\mu_{s-}^N),x_i(\mu_{s-}^N)\big) 
\end{equation}
we have a semimartingale decomposition $\langle \mu_t^N, \varphi_t\rangle = M^N_t(\varphi)+ V^N_t(\varphi)$, where $M_t^N(\varphi) 
$ equals
\begin{align*}
& N^{-1}\int_0^t \sum_{i \leq \langle N\mu_{s-}^N, {\bf 1}\rangle}\int_{[0,1]^{d-1} \times \R_+}H_\varphi(s,r_i(\mu_{s-}^N),x_i(\mu_{s-}^N),\theta) \1_{\{u \leq \beta(r_i(\mu_{s-}^N))\kappa(\theta)\}}M_i(\diff s,\diff \theta, \diff u) \\
&  -\int_0^t \int_{[0,1]^{d-1}}\langle H_\varphi(s,\cdot, \theta)\beta, \mu_s^N \rangle \kappa(\theta)\diff \theta \diff s +\tfrac{\Dif}{N}\int_0^t \sum_{i  = 1}^{\langle N\mu_s^N, {\bf 1}\rangle} \nabla_x\varphi_s( r_i(\mu_s^N),x_i(\mu_s^N))\diff B_s^i.
\end{align*}
It is a squared integrable martingale with predictable bracket
\begin{align*}
\langle M_{\cdot}^N(\varphi) \rangle_t & =  N^{-1}\int_0^t \int_{[0,1]^{d-1}}\langle H_\varphi(s,\cdot, \theta)^2\beta, \mu_s^N \rangle \kappa(\theta)\diff \theta \diff s +{ \Dif} N^{-1}\int_0^t \langle  (\nabla_x\varphi(s,\cdot))^2, \mu_s^N \rangle \diff s,
\end{align*}
and $V^N_t(\varphi) = \langle \mu_t^N, \varphi(t,\cdot)\rangle - M^N_t(\varphi)$ has bounded variation. The tightness of the sequence of the laws of $\mu^N$
 is then a consequence of
\begin{equation} \label{eq: first tight}
\sup_{N \geq 1}\E[\sup_{0 \leq t \leq T}\langle \mu_t^N, \varphi(t,\cdot)\rangle] < \infty,
\end{equation}
\begin{equation} \label{eq: second tight}
\sup_{N \geq 1, S,S'}\E[|M_{S'}^N(\varphi)-M_S^N(\varphi)|] \leq C\delta^{1/2},
\end{equation}
\begin{equation} \label{eq: third tight}
\sup_{N\geq 1, S,S'}\E[|V_{S'}^N(\varphi)-V_S^N(\varphi)|] \leq C\delta,
\end{equation}
for some $C>0$ uniformly over stopping times $0 \leq S\leq S'\leq S+\delta \leq T$
as follows from Aldous' \cite{aldous1978stopping} and Rebolledo's \cite{joffe1986weak} criteria.
Since $\varphi$  is bounded, \eqref{eq: first tight} follows from \eqref{eq: moment esti}. Next
\begin{align*}
\E[|M_{S'}^N(\varphi)-M_S^N(\varphi)|] & \leq \E[\langle M^N(\varphi)\rangle_{S+\delta}-\langle M^N(\varphi)\rangle_{S}]^{1/2} \\
& \leq  N^{-1/2} \big(3\|\varphi \|_\infty \|\beta \kappa \|_\infty^{1/2} +\Dif \|\nabla_x \varphi \|_\infty \big)\E\big[\int_S^{S+\delta} \langle \mu_s^N, {\bf 1}\rangle \diff s\big]^{1/2} \\
\end{align*} 
and \eqref{eq: second tight} follows by \eqref{eq: moment esti}. Finally, $V^N_{S'}-V^N_S$ is bounded by
\begin{align*}
& \int_S^{S+\delta} \Big(\int_{[0,1]^{d-1}}\langle |H_\varphi(\cdot, \theta)|\beta, \mu_s^N \rangle \kappa(\theta)\diff \theta + \langle \partial_s\varphi+g(r)|\partial_r \varphi | { + \lambda|\nabla_x U_{K}}[\mu_s^N]\cdot \nabla_x \varphi | {+ { \Dif}|\Delta_x \varphi|}, \mu_s^N \big\rangle\Big) \diff s \\
&\leq \delta \big(3\|\varphi\|_\infty \|\beta\kappa\|_\infty  +\|\nabla_x K\|_\infty\|\nabla_x \varphi\|_\infty\sup_{0 \leq t \leq T}\langle \mu_t^N, {\bf 1}\rangle + { \Dif} \| \Delta_x \varphi \|_\infty \big)\sup_{0 \leq t \leq T}\langle \mu_t^N, {\bf 1}\rangle
\end{align*}
where we used 
$|{ \nabla_x U_{K}}[\mu_s^N](r,x)| \leq \|\nabla_x K\|_\infty \langle \mu_s^N, {\bf 1}\rangle.$
Thus \eqref{eq: third tight} follows by \eqref{eq: moment esti} likewise.\\

\noindent {\em Step 4.} From
$$\big|\langle \mu_t^N, \varphi_t  \rangle - \langle \mu_{t-}^N, \varphi_t\rangle\big| \leq 3\|\varphi\|_\infty N^{-1},$$
almost-surely, we infer that any process $\mu^\infty$ such that $\mathrm{Law}(\mu^\infty)$ is a limit point of $\mathrm{Law}(\mu^N)$ is almost-surely continuous, in a strong sense,  {\it i.e.} in total variation say.\\

\noindent {\em Step 5.} We now show that almost-surely, any $\mu^\infty$ of {\em Step 4} is the unique solution of~{\eqref{eq:strongtime}}. First,
$\sup_{0 \leq t \leq T}\langle \mu_t^\infty, {\bf 1}\rangle < \infty$ almost surely by \eqref{eq: moment esti}.  For $\nu \in \mathcal C([0,T], \mathcal M_F)$, introduce
\begin{align*}
\Psi_t(\nu) & = \langle \nu_t, \varphi_t\rangle  -  \langle \nu_0,{ \varphi_0} \rangle -\int_0^t \int_{\mathcal X \times [0,1]}H_\varphi(s,x,r,\theta)\beta(r)\kappa(\theta) \nu_s(\diff x,\diff r) 
\diff \theta \diff s \nonumber\\ 
&- \int_0^t \langle \partial_s \varphi+g(r)\partial_r \varphi { - \lambda \nabla_x U_{K}}[\nu_s]\cdot \nabla_x \varphi + { \Dif}\Delta_x \varphi, \nu_s \big\rangle \diff s,\nonumber
\end{align*}
where $H_\varphi$ is defined in \eqref{eq: def Hpsi}. For every $N \geq 1$, we have
\begin{equation} \label{eq: fund mg equality}
M_t^N(\varphi) = \Psi_t(\mu ^N).
\end{equation}
On the one hand,
$$\E[|M_t^N(\varphi)|^2] = \E[\langle M_{\cdot}^N(\varphi)\rangle_t] \leq N^{-1}\big((3\|\varphi\|_\infty)^2\|\beta \kappa\|_\infty  +\Dif^2\|\nabla_x \varphi \|_\infty^2\big)\int_0^t \E[\langle \mu_s^N, {\bf 1}\rangle]\diff s$$
that converges to $0$ thanks to \eqref{eq: moment esti}. On the other hand,  since $\mu^\infty$ is almost surely strongly continuous and $\varphi$ is continuous, using moreover Assumption \ref{hyp: init final}  {\it (vi)} and {\it (vii)}, we have that $\nu \mapsto \Psi_t(\nu)$ is almost-surely continuous\footnote{Here we use the fact that although the mapping $x \in \mathbb D([0,T], \mathcal X) \mapsto x(t)$ is not continuous for the Skorokhod topology, it is continuous at $x$ if $x$ is continuous as a function from $[0,T]$ to $\mathcal X$.} at $\mu^\infty$.
Moreover
$$\Psi_t(\nu) \leq C(1+\sup_{0 \leq s \leq t}\langle \nu_s, {\bf 1}\rangle^2).$$
Therefore, using \eqref{eq: moment esti} again, the sequence $\Psi_t(\mu^N)$ is uniformly integrable and we infer $\E[|\Psi_t(\mu^N)|] \rightarrow \E[|\Psi_t(\mu^\infty)|]$. We conclude $\E[|\Psi_t(\mu^\infty)|]=0$ by \eqref{eq: fund mg equality} and obtain the result.\\

\noindent {\it Step 6.}  Let $\tau >0$ By the preceding results, we have that $\langle \mu^N, \1_{|(r,x)| \leq \tau}\rangle$ converges in law to $\langle \mu^\infty, \1_{|(r,x)| \leq \tau}\rangle$. 
The result holds true replacing $\1_{|(r,x)| \leq \tau}$ by $\bf 1$ up to an error controlled by $\sup_{N \geq 1}\mu^N(\1_{|(r,x)| \geq \tau})+\mu(\1_{|(r,x)| \geq \tau})$. Both terms converge to $0$ as $\tau \rightarrow \infty$. This is a consequence of the fact that $\sup_{N \geq 1}\E\big[\sup_{t \in [0,T]}\langle \mu_t^N,|x|+r\rangle\big] <\infty$, which is proved in the same way as 
 \eqref{eq: moment esti} thanks to the assumption $\sup_{N \geq 1}\E\big[\langle \mu_0^N,|x|+r\rangle\big] <\infty$. (We omit the details.) Hence $\langle \mu^N, {\bf 1}\rangle$ to $\langle \mu^\infty, {\bf 1}\rangle$ in law in $\mathcal D([0,T], \mathcal X)$. Since the limiting process is continuous, the global convergence result also holds true when $\mathcal P(\mathbb D([0,T], \mathcal M_F)$ is equipped with the weak topology, following the criterion proved in Roelly and M\'el\'eard \cite{meleard1993convergences}.\\

The proof of Theorem \ref{th: weak limit} is complete.
\end{proof}

\section{From the mean-field equation to a localisation limit}\label{sec:meso-macro}
%
{ 
 
 \subsection{Dimensionless equations} \label{sec:dim}

We express the problem \eqref{eq:croisfragdiff} in dimensionless variables. From now on, we replace the notation $\mu_t^\infty$ by $u(t,r,x)$ the solution to \eqref{eq:croisfragdiff}, {\it i.e.} we write $u$ for $\mu^\infty$ in the notation of the previous section. Let $t_0$ be the unit of time and $x_0, r_0, \mu_0 = \frac{1}{ r_0 x_0^d}$ be the units of space, size, and distribution function respectively. The scaling of $u$ comes from the fact that it is a probability distribution on $\mathbb{R}_+ \times \mathbb{R}^d $. We define the dimensionless variables:
$$\bar{x} = \frac{x}{x_0}, \bar{r} = \frac{r}{r_0}\; \bar{t} = \frac{t}{t_0}, \; \bar{u}(\bar{t},\bar{r},\bar{x}) = \frac{u(t,r,x)}{\mu_0},$$
and we introduce the following dimensionless parameters:
$$
\bar{\lambda} = \frac{\lambda}{t_0}, \; \bar{g} = g \frac{t_0}{r_0}, \; \bar{\beta} = t_0 \beta, \;  \bar{\Dif} = \Dif \frac{t_0}{x_0^2}, \; \bar{K} { (\bar r,\bar r',\bar x)} = K { (r,r',x)} \frac{t_0^2}{x_0^2}.
$$
We have
$$
\partial_t u(t,r,x) = \frac{1}{t_0 r_0 x_0^d} \partial_{\bar{t}} \bar{u}(\bar{t},\bar{r},\bar{x}),{ \qquad \p_r (gu)=\f{1}{t_0 r_0x_0^d} \p_{\bar r} (\bar g \bar u),}
$$
and { from~\eqref{eq: def noyau}}

\begin{align*}
\nabla_x U_K[u] { (r,x)} &= \int_{\mathbb{R}^d \times \mathbb{R}_+} \nabla_x K(r,r',x-x') u(t,r',x') \diff x'\diff r{ '}\\
&= \int_{\mathbb{R}^d \times \mathbb{R}_+} \frac{x_0^2}{t_0^2} \f{1}{x_0} \nabla_{\bar{x}} \bar{K}(\bar{r},{ \f{r'}{r_0}},\bar{x}-{ \f{x'}{r_0}}) \frac{1}{ r_0 x_0^d} \bar{u}(\bar{t},{ \f{r'}{r_0}},{ \f{x'}{x_0}}) \diff x'  \diff r{ '}\\
&= \frac{x_0}{t_0^2} \int_{\mathbb{R}^d \times \mathbb{R}_+}  \nabla_{\bar{x}} \bar{K}(\bar{r},\bar r',\bar{x}-\bar x') \bar{u}(\bar{t},\bar r',\bar x')  \diff \bar x' \diff \bar r{'}\\
& = \frac{x_0}{t_0^2} \nabla_{\bar{x}} \bar{U}_{\bar K}[\bar{u}]{ (\bar r,\bar x)}.
\end{align*}

In this new set of variables, {thanks to the appropriate links between the scalings}, Equation~\eqref{eq:croisfragdiff} { remains} (omitting the bars for the sake of clarity): 
\begin{multline*}
 \partial_t u + \partial_r (g u) - \lambda \nabla_x \cdot \big(u \nabla_x U_K[u] \big) + \beta(r) u- \Dif \Delta_x u \\ = \int\limits_{[0,1]^{d-1}} 2^{1+\f{1}{d}}\beta(2^{\f{1}{d}}r) \kappa(\theta) u( 2^{\f{1}{d}}r,x {\pm} \alpha r { P(2\pi\theta)})\diff \theta.
\end{multline*}
Finally, choosing the size scale with $R_0$ (see remark \ref{rem: rzero}), the time scale with $\lambda = 1$ and space scale with $\Dif$ gives
\begin{multline*}
 \partial_t u + \partial_r (g u) - \nabla_x \cdot \big( u \nabla_x U_K[u]\big) + \beta(r) u- \Dif \Delta_x u 
 \\
 = \int\limits_{[0,1]^{d-1}} 2^{1+\f{1}{d}}\beta(2^{\f{1}{d}}r) \kappa(\theta) u(t, 2^{\f{1}{d}}r,x {\pm} \alpha r { P(2\pi\theta)})\diff \theta.
\end{multline*}
It is noteworthy that here, we have chosen to follow the system at the diffusion time scale. 

\subsection{Scaling} \label{sec:scalin}
So far, the chosen time and space scales are microscopic ones, and describe the system
at the scale of the agent interactions. In order to describe the system at the macroscopic scale, we introduce a small parameter $\varepsilon \ll 1$ and choose the space, time and size units as $\tilde{x}_0 = \varepsilon^{-1} x_0, \quad \tilde{t}_0 = \varepsilon^{-2} t_0, \tilde{r}_0 = r_0$. The variables $t,r,x$ and unknown $u$ are correspondingly changed to $ \tilde{x} = \varepsilon x, \tilde{t} = \varepsilon^2 t, \tilde{r} = r, u_\varepsilon(\tilde{t},\tilde{r},\tilde{x})= \varepsilon^{-d} u(t,r,x)$. We suppose that $\alpha = O(1)$, that growth and fragmentation are slow processes, i.e $\tilde{g} = \frac{g}{\varepsilon^2}, \tilde{\beta} = \frac{\beta}{\varepsilon^2}$ with $\tilde{g}, \tilde{\beta}$ of order one, and that the interaction function $K$ {acts at the microscopic scale, hence} tends towards a Dirac delta: $$K(r,r',x) = \frac{1}{\varepsilon^d} \tilde{K}(\tilde{r},\tilde{r}',\frac{\tilde{x}}{\varepsilon}),$$
where $\tilde{K}$ is of order 1 { in $L^\infty$}.
We have

    \begin{align*}
  \nabla_x U_K[u](t,r,x) &= \iint_{\mathbb{R}^d \times [0,R]} \nabla_x K(r,r',x-x') u(t,r',x') \diff x'\diff r'\\
  &= \varepsilon \iint_{\mathbb{R}^d \times [0,R]}\frac{1}{\varepsilon^d} \nabla_{\tilde{x}} \tilde{K}(\tilde{r}, \tilde{r'}, \frac{\tilde{x} - \varepsilon x'}{\varepsilon}) \varepsilon^d u_\varepsilon(\tilde{t},\tilde{r}',\varepsilon x') \diff x' \diff \tilde{r}' \\
  & = \varepsilon \iint_{\mathbb{R}^d \times [0,R]}\frac{1}{\varepsilon^d} \nabla_{\tilde{x}} \tilde{K}(\tilde{r}, \tilde{r'}, \frac{\tilde{x} - z}{\varepsilon}) u_\varepsilon(\tilde{t},\tilde{r}',z) \diff z\diff\tilde{r}'\\
  & = \varepsilon \int\limits_0^R [\nabla_{\tilde{x}} K_\varepsilon(\tilde{r},s,.) \ast u_\varepsilon(t,s,.)](\tilde{x}) \diff s,
    \end{align*}

where we have noted $K_\varepsilon(r,r',x) = \frac{1}{\varepsilon^d} \tilde{K}(r,r',\frac{x}{\varepsilon})$. Then, we compute

\begin{multline*}
\int\limits_{[0,1]^{d-1}} 2^{1+1/d} \beta(2^{1/d}) \kappa(\theta) u(t,2^{1/d}r,x \pm \alpha r P(2\pi \theta)) \diff\theta \\ = \int\limits_{[0,1]^{d-1}} 2^{1+1/d} \varepsilon^{2{ +d}} \tilde{\beta}(2^{1/d}) \kappa(\theta) u_\varepsilon(\tilde{t},2^{1/d}\tilde{r},\tilde{x} \pm \varepsilon \alpha r P(2\pi \theta)) \diff \theta \\
=\varepsilon^{2{ +d}} 2^{1+1/d}\tilde{\beta}(2^{1/d}) u_\varepsilon(\tilde{t},2^{1/d}\tilde{r},\tilde{x}) + O(\varepsilon^3),
\end{multline*}
where we have used the fact that $\int\limits_{[0,1]^{d-1}}\kappa(\theta) \diff \theta =1$. Altogether, omitting the tildes for the sake of clarity, we obtain:
\begin{multline}\label{scaledmac}
 \partial_t u_\varepsilon + \partial_r (g u_\varepsilon) - \nabla_x \cdot \bigg(u_\varepsilon \nabla_{x} \int\limits_0^R [ K_\varepsilon({r},s,.) \ast u_\varepsilon(t,s,.)]({x}) \diff s  \bigg) + \beta(r) u_\varepsilon- \Dif \Delta_x u_\varepsilon \\ 
 = 2^{1+1/d} \beta(2^{1/d}) u_\varepsilon(t,2^{1/d}r,x) + O(\varepsilon),    
\end{multline}

\begin{remark}
   Under these scaling assumptions, we account for the fact that particles are very small compared to the space scale. {In this regime, the small parameter $\ep$ takes into account the local character of the interaction, which typically occurs at the same scale as the radius $r$ of a particle, whereas we consider a large number of cells, occupying a domain of much larger size.}  Another possibility would have been to fix the space scale and rescale the size variable. This would be accompanied by different scaling assumptions for the dimensionless parameters. Here, the spatial interaction is kept of order 1 by setting the interaction strength of order $\frac{1}{\varepsilon^d}$. In the limit $\varepsilon \rightarrow 0$, the interaction kernel therefore converges towards a Dirac delta function. This particular regime is usually called the localisation limit.
\end{remark}

{As detailed in the introduction, the rigorous derivation of the localisation limit for this system presents several difficulties { due to (a)} the { growth and division} terms { which lead to a nonconservative equation and prevent the} use of entropy estimates, and { (b) the size-structure of the density which requires new estimates to ensure compactness}.  Therefore in this paper, we present} the rigorous proof of the localisation limit for a simplified system without growth or fragmentation. This work is presented in the next Section \ref{Sec3.3}. The case with growth and fragmentation is illustrated numerically in Section \ref{Sec4}. }

{
\subsection{Localisation limit for a system without growth and fragmentation} \label{Sec3.3}}

Let us consider Equation~\eqref{scaledmac} without the growth and fragmentation terms, namely: 
\beq
\p_t n_\eps (t,r,x) -  \nabla_x\cdot \Big( n_\eps (t,r,x) \nabla_x { U_{K_\eps}[n_\eps]} \Big)= \Dif \Delta_x n_\eps(t,r,x) ,
\label{eq:n}
\eeq
{ with $U_{K_\eps}[n_\eps] = \int_0^R [K_\eps(\cdot, r,s) *_x n_\eps(t,s,\cdot)](x) \diff s $.}
In the following, we establish rigorously the limit $ \eps \rightarrow 0 $ of the model \eqref{eq:n} towards the following equation:
\beq
\p_t n_0 (t,r,x) -  \nabla_x\cdot \Big( n_0 (t,r,x) \nabla_x { U_0[n_0]}\Big)= \Dif \Delta_x n_0(t,r,x),
\label{eq:n_0}
\eeq
{ with $U_0[n_0] = \int_0^R   \Gamma(r,s) n_0(t,s,x)\diff s$ and $\Gamma(r,s) = \int_{\R^d} K (r,s,x) \diff x $}.

\

Before stating our main result, we list a set of regularity assumptions on the interaction function and the initial data.  
{ \begin{assumption}[Interaction function]
   \label{as:interaction} We assume that 
\beq \label{eq:as1}
{\Gamma}(r,s) := \int_{\R^d} K (r,s,x) \diff x  \in L^\infty([0,R]^2),\qquad \forall (r,s) \in [0,R]^2.
\eeq
In addition we suppose that there exists
\beq 
\rho \in  H^1([0,R];L^1(\R^d)) 
\label{eq:as3bis}
\eeq
such that, with $\check{\rho}(r,x):=\rho(r,-x),$ we have
\beq 
K(r,s,x) = [ \check{\rho}(r,\cdot) *_x \rho(s,\cdot)](x).
\eeq
\end{assumption}}
{ The reason for this somewhat technical assumption, used in many and various studies e.g.~\cite{oelschlager1990large,LMG,doumic2024multispecies,carrillo2019blob}, lies in the following} computation. First, we obviously have that $ K_\eps(r,s,x) = [\check{\rho}_\eps(r,\cdot) *_x \rho_\eps(s,\cdot)](x) $. Under Assumption~\ref{as:interaction}, we may write that for a given function $f\in L^p([0,R]\times\R^d)$,
\beq \begin{aligned} \label{sys:calcul_convol}
  \int_{ \R^d}  \int_0^R  \int_0^R &  f(r,x) \; [K_\eps(r,s,\cdot) *_x f(s,\cdot)](x)  \; \diff s \diff r  \diff x    \\
  & =  \int_{ \R^d} \int_0^R  \int_0^R  f(r,x) \; [\check{\rho}_\eps(r,\cdot) *_x \rho_\eps(s,\cdot) *_x f(s,\cdot)](x)  \; \diff s \diff r \diff x
\\
&= \int_{ \R^d}  \int_0^R  \int_0^R  [\rho_\eps(r,\cdot)  *_x f(r,\cdot)](y) \; [   \rho_\eps(s,\cdot)  *_x f(s,\cdot)](y)  \; \diff s \diff r \diff y \\
 & =  \int_{ \R^d} \Big( \int_0^R [\rho_\eps(r,\cdot)  *_x f(r,\cdot)](y)  \; \diff r \Big)^2 \diff y \geq 0
\end{aligned}
\eeq
This type of computation will be used several times in the following section, and is a key point for several estimates.

{ \begin{assumption}[Initial data]\label{as:init}
    We define ${ \mathcal X_R}=[0,R]\times \R^d$ and we assume that 
\beq 
\begin{cases}
n^{ini}_{\varepsilon} := n_\eps(0,\cdot,\cdot)   \geq 0, \qquad  n^{ini}_{\varepsilon} \in L^\infty([0,R];L^1(\R^d)\cap L^2(\R^d)), \\
 \dst\int_{{ \mathcal X_R}}|x|^2 n^{ini}_{\varepsilon} \diff r \diff x <+\infty, \qquad n^{ini}_{\varepsilon} \ln(n^{ini}_{\varepsilon}) \in L^1({ \mathcal X_R}),
\end{cases}
 \label{eq:as4}
\eeq
{ with uniform bounds with respect to $\eps$ in the respective functional spaces.}
\end{assumption}}
We are now in position to state the main result of this section.
\begin{theorem} \label{TH1}
Let $T>0$ and $R>0$. We define $  \mathcal X_{T,R}=[0,T] \times \mathcal X_R $. 
Let $K_\eps$ and $(n^{ini}_{\varepsilon})$ satisfy Assumptions~\ref{as:interaction} and~\ref{as:init}. 
After extraction of subsequences, the density $n_{\varepsilon}$ solution to~\eqref{eq:n} converges weakly in $L^{1+ \f 2 d}({ \mathcal X_{T,R}})$ as $\varepsilon \rightarrow 0$ to a limit $n_0 \in L^{1+\f 2 d}({ \mathcal X_{T,R}})$ which satisfies~\eqref{eq:n_0} { in the sense of distributions}.
\end{theorem}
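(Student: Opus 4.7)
The plan is to combine entropy-dissipation estimates exploiting the auto-convolution identity~\eqref{sys:calcul_convol} with the size regularity of $\rho$ from Assumption~\eqref{eq:as3bis}, in order to derive uniform $\eps$-bounds, pass to a weak $L^{1+2/d}$ limit, and identify the limit via strong compactness of the nonlocal potential $U_{K_\eps}[n_\eps]$.

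First, I would introduce the auxiliary field
\beq\label{plan-F}
F_\eps(t,x) = \int_0^R(\rho_\eps(r,\cdot)*_x n_\eps(t,r,\cdot))(x)\diff r,
\eeq
for which \eqref{sys:calcul_convol} applied to $f=n_\eps$ gives the Rao-type energy $E_\eps(t) := \tfrac12\iint n_\eps U_{K_\eps}[n_\eps]\diff r\diff x = \tfrac12\|F_\eps(t,\cdot)\|_{L^2(\R^d)}^2 \geq 0$, while a variant of the same computation (Fubini plus one integration by parts) with $f = \nabla_x n_\eps$ yields the positivity identity $\iint\nabla_x n_\eps\cdot\nabla_x U_{K_\eps}[n_\eps]\diff r\diff x = \|\nabla_x F_\eps\|_{L^2(\R^d)}^2 \geq 0$. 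Testing~\eqref{eq:n} against $\ln n_\eps$ then gives
\beq\label{plan-shannon}
\f{d}{dt}\iint n_\eps\ln n_\eps\diff r\diff x + \|\nabla_x F_\eps\|_{L^2(\R^d)}^2 + D\iint\f{|\nabla_x n_\eps|^2}{n_\eps}\diff r\diff x = 0,
\eeq
while differentiating $E_\eps$ and using the symmetry of $K_\eps$ yields $\tfrac{d}{dt}E_\eps = -\iint n_\eps|\nabla_x U_{K_\eps}[n_\eps]|^2 - D\|\nabla_x F_\eps\|_{L^2}^2$; integrating and using $E_\eps(T)\geq 0$ bounds $\int_0^T\iint n_\eps|\nabla_x U_{K_\eps}[n_\eps]|^2\diff r\diff x\diff t$ uniformly in $\eps$. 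Combined with a Gr\"onwall argument for the second moment (fed by this $L^1_t$-bound on $\iint n_\eps|\nabla_x U_{K_\eps}[n_\eps]|^2$), one obtains $\eps$-uniform bounds on $n_\eps$ in $L^\infty(0,T;L^1\cap L\log L(\mathcal X_R))$, on $F_\eps$ in $L^2(0,T;H^1(\R^d))\cap L^\infty(0,T;L^2(\R^d))$, and on $\nabla_x\sqrt{n_\eps}$ in $L^2(\mathcal X_{T,R})$. Gagliardo--Nirenberg interpolation of $\sqrt{n_\eps}$ between $L^\infty_t L^2_x$ and $L^2_t H^1_x$ then delivers the stated $L^{1+2/d}$ bound, so that $n_\eps\rightharpoonup n_0$ weakly in $L^{1+2/d}(\mathcal X_{T,R})$ up to extraction.

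The crux is to establish strong compactness of $U_{K_\eps}[n_\eps](t,r,x) = (\check\rho_\eps(r,\cdot)*_x F_\eps(t,\cdot))(x)$ in $L^2_{loc}$. Strong compactness of $F_\eps$ in $L^2(0,T;L^2_{loc}(\R^d))$ follows from its $L^2_t H^1_x\cap L^\infty_t L^2_x$-bound combined with a time-regularity bound on $\p_t F_\eps$ in a negative Sobolev space (extracted from~\eqref{eq:n} tested against smooth functions), via Aubin--Lions. Compactness in the size variable $r$ comes from Assumption~\eqref{eq:as3bis}: Young's inequality yields
\beq\label{plan-r-reg}
\int_0^R\|\p_r U_{K_\eps}[n_\eps](t,r,\cdot)\|_{L^2(\R^d)}^2\diff r\leq \|\p_r\rho\|_{L^2_r L^1_x}^2\,\|F_\eps(t,\cdot)\|_{L^2(\R^d)}^2
\eeq
uniformly in $\eps$. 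Since $\check\rho_\eps(r,\cdot)*_x g\to \gamma(r)\,g$ in $L^2_{loc}$ as $\eps\to 0$ for $g\in L^2_{loc}$, with $\gamma(r):=\int_{\R^d}\rho(r,z)\diff z$, the strong convergence of $F_\eps$ transfers to $U_{K_\eps}[n_\eps]\to U_0[n_0]$ strongly in $L^2_{loc}(\mathcal X_{T,R})$; the identification uses $\Gamma(r,s) = \gamma(r)\gamma(s)$, which follows directly from the auto-convolution structure.

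Finally, integrating by parts rewrites the nonlinear term in the weak formulation of~\eqref{eq:n} as $-\int U_{K_\eps}[n_\eps](\nabla_x n_\eps\cdot\nabla_x\varphi + n_\eps\Delta_x\varphi)\diff r\diff x\diff t$; the strong convergence of $U_{K_\eps}[n_\eps]$, paired with the weak convergence of $n_\eps$ in $L^{1+2/d}$ and of $\nabla_x n_\eps = 2\sqrt{n_\eps}\nabla_x\sqrt{n_\eps}$ in $L^{(d+2)/(d+1)}(\mathcal X_{T,R})$ (by H\"older, combining the $L^{1+2/d}$-bound on $n_\eps$ with the $L^2$-bound on $\nabla_x\sqrt{n_\eps}$), enables the passage to the limit and yields~\eqref{eq:n_0} in the distributional sense. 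The hardest step is the $r$-compactness of $U_{K_\eps}[n_\eps]$, where the $H^1$-regularity~\eqref{eq:as3bis} on $\rho$ is indispensable: without it, the lack of uniform control on $\nabla_x U_{K_\eps}[n_\eps]$ as $\eps\to 0$ would obstruct the standard Rellich--Kondrachov argument in the size variable.
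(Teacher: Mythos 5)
Your proposal is correct and follows the paper's strategy for all the a priori estimates: your field $F_\eps$ is exactly the paper's $q_\eps$ from \eqref{def:qeps}, and the Rao-type identity, the Shannon entropy balance, the second-moment Gr\"onwall argument fed by $\int n_\eps|\nabla_x U_{K_\eps}[n_\eps]|^2$, the Gagliardo--Nirenberg step giving \eqref{eq:nLp}--\eqref{eq:DnLp}, and the final weak--strong pairing after integrating by parts are all the same as in Section~\ref{Sec3.3}. Where you genuinely diverge is the compactness/identification step. The paper works directly on $U_{K_\eps}[n_\eps]$: it proves a uniform $L^2_t H^1_{r,x}$ bound (including $\partial_r U_{K_\eps}[n_\eps]$ via \eqref{eq:as3bis}), writes $\partial_t U_{K_\eps}[n_\eps]=\nabla_x\cdot P_\eps$ with $P_\eps$ uniformly in $L^1$, deduces the time-translation estimate \eqref{eq:Comp_t}, and concludes by the Riesz--Fr\'echet--Kolmogorov theorem in $L^1_{loc}$, identifying the limit $U_0=\int_0^R\Gamma(r,s)n_0\,\diff s$ by a duality argument (moving the convolution onto the test function). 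You instead get strong $L^2_tL^2_{loc}$ compactness of the $r$-independent field $F_\eps$ by Aubin--Lions and then transfer it to $U_{K_\eps}[n_\eps]=\check\rho_\eps(r,\cdot)*_xF_\eps$ through the approximate-identity property, using the factorisation $\Gamma(r,s)=\gamma(r)\gamma(s)$; this is an attractive simplification because compactness in $r$ becomes essentially automatic. Two points need care to make it airtight: (i) the approximate identity acts on the $\eps$-dependent family $F_\eps$, so the fixed-function statement $\check\rho_\eps(r,\cdot)*g\to\gamma(r)g$ does not apply directly -- you should either split $\check\rho_\eps(r,\cdot)*F_\eps-\gamma(r)F_0=\check\rho_\eps(r,\cdot)*(F_\eps-F_0)+(\check\rho_\eps(r,\cdot)*F_0-\gamma(r)F_0)$ (handling the localisation tails in the first term by Young's inequality) or invoke the translation equicontinuity supplied by the uniform bound on $\nabla_xF_\eps$ in $L^2$, together with $\sup_r\|\rho(r,\cdot)\|_{L^1}<\infty$ from the embedding $H^1([0,R];L^1)\hookrightarrow C([0,R];L^1)$ to integrate in $r$; and (ii) your closing remark slightly misattributes the role of the $\partial_r$-estimate: in your own route the $H^1$-in-$r$ assumption enters mainly through this continuity/uniform $L^1_x$ bound in $r$ (and through the paper's bound on $\|q_\eps(0)\|_{L^2}$), whereas the bound on $\partial_rU_{K_\eps}[n_\eps]$ is what the paper's Kolmogorov argument uses for $r$-compactness, not a control on $\nabla_xU_{K_\eps}[n_\eps]$. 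With these clarifications your plan yields the theorem.
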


The main difficulty of the proof lies in the nonlinearity of the term $\nabla \cdot (n_\eps \nabla U_{K_\eps}[n_\eps] )$ in \eqref{eq:n}, which requires strong compactness of $U_{K_\eps}[n_\eps]$ to pass to the limit. In Section~\ref{sec:a priori}, we show different a priori estimates which we use in Section~\ref{sec:compactness} to show the compactness of $(U_{K_\eps}[n_\eps])$ in  $L_{loc}^1([0,T]\times { [0,R] \times } \R^d)$, thus allowing us to extract a strongly converging subsequence. Finally in Section~\ref{sec:convergence}, we prove that the limit of an extracted subsequence of $n_\eps$ is solution of~\eqref{eq:n_0}, which concludes the proof of Theorem~\ref{TH1}. 

{ Let us also note that, having already proved by Theorem~\ref{th: weak limit} the existence and uniqueness of a weak solution to~\eqref{eq:n}, the series of a priori estimates used for Theorem~\ref{TH1} implies, under Assumptions~\ref{as:interaction} and~\ref{as:init}, the existence of a strong solution.}

\subsubsection{A priori estimates}\label{sec:a priori}

In this section we present estimates for $n_\eps$ and $ q_\eps$ defined as follows:
\beq\label{def:qeps} q_\eps(t,x) \coloneqq \int_0^R  [\rho_\eps(r,\cdot)  *_x n_\eps(t,r,\cdot)](x) \diff r .\eeq
Then we can write
\[ \begin{aligned}
 { U_{K_\eps}[n_\eps]} (t,r,x)&=\int_0^R [K_\eps(\cdot, r,r') *_x n_\eps(t,r',\cdot)](x) \diff r'  = \int_0^R  [ \check{\rho}_\eps(r,\cdot) *_x \rho_\eps(r',\cdot) *_x n_\eps(t,r',\cdot)](x) \diff r' \\
& = \int_{\R^d} { \check{\rho}}_\eps(r,x-y) \Big( \int_0^R   [\rho_\eps(r',\cdot) *_x n_\eps(t,r',\cdot)](y) \diff r' \Big) \diff y \\
& = [ \check{\rho}_\eps(r,\cdot)  *_x q_\eps(t,\cdot)](x).
\end{aligned}
\]
Thus Equation~\eqref{eq:n} can be rewritten as
\beq
\p_t n_\eps(t,r,x)  -  \nabla_x\cdot \Big( n_\eps  \nabla_x [\check{\rho}_\eps(r,\cdot)  *_x q_\eps(t,\cdot)](x) \Big)= \Dif \Delta_x n_\eps(t,r,x) , \qquad \mbox{ on } \;{ \mathcal X_{T,R}}.
\label{eq:n2}
\eeq

\begin{proposition}
{ Work under Assumptions~\ref{as:interaction} and~\ref{as:init} and let }$n_\eps$ be solution to~\eqref{eq:n2}. Then 
$$n_\eps \geq 0 \quad \mbox{ on } \quad { \mathcal X_{T,R}}.$$
In addition, for $t\in[0,T]$,  uniformly  { with respect to} $\ep$,
$$  \int_{ { \mathcal X_R}} |x|^2 n_\eps(t,r,x) \diff r \diff x   < +\infty \quad \mbox{ and } \quad \int_{ { \mathcal X_R}} n_\eps(t,r,x) \,|\ln(n_\eps(t,r,x))| \diff r \diff x  < +\infty . $$
Moreover we have the following estimates on $n_\eps$  uniformly  { with respect to} $\ep$,\
\begin{align}
& n_\eps \in L^\infty([0,T]\times[0,R];L^1(\R^d)) \label{eq:nL1} \\
& \sqrt{n_\eps} \in L^2([0,T]\times[0,R];H^1(\R^d)) \label{eq:sqrtnH1} \\
& n_\eps \in L^{1+ \f 2d}({ \mathcal X_{T,R}})  \label{eq:nLp} \\
&  \nabla_x n_\eps \in L^{\f {d+2}{d+1}}({ \mathcal X_{T,R}}),  \label{eq:DnLp} 
\end{align}
and the following estimates on $q_\eps$  defined by~\eqref{def:qeps},  uniformly { with respect to} $\ep$,
\begin{align}
& q_\eps \in L^\infty([0,T];L^1(\R^d)) \label{eq:qL1} \\
& q_\eps \in L^2([0,T];H^1(\R^d)) \label{eq:qH1} .
\end{align}
Finally we also have  uniformly  { with respect to} $\ep$
\beq
n_\eps |\nabla_x  \check{\rho}_\eps *_x q_\eps |^2 \in L^{ 1 }({ \mathcal X_{T,R}}) .\label{eq:est1}
\eeq

\end{proposition}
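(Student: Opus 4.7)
The plan combines mass conservation with two energy identities that crucially use the auto-convolution structure of Assumption~\ref{as:interaction}, supplemented by standard interpolation. Nonnegativity of $n_\eps$ follows from a parabolic maximum principle applied to \eqref{eq:n2} seen as a linear advection-diffusion equation with drift $\nabla_x U_{K_\eps}[n_\eps]$, or directly from the empirical-measure representation of Section~\ref{sec:micro-meso}. Testing \eqref{eq:n2} against $\1$ in $x$ gives $\tfrac{d}{dt}\int n_\eps\,\diff x=0$, whence \eqref{eq:nL1}; Young's convolution inequality together with the Sobolev embedding $H^1([0,R])\hookrightarrow L^\infty([0,R])$ applied to $r\mapsto\|\rho(r,\cdot)\|_{L^1}$ then yields \eqref{eq:qL1}.

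The heart of the argument is to derive two energy identities. Multiplying \eqref{eq:n2} by $U_{K_\eps}[n_\eps]$, using the symmetry $K(r,r',x)=K(r',r,-x)$ (a direct consequence of the auto-convolution) to identify $\int U_{K_\eps}[n_\eps]\,\p_t n_\eps\,\diff r\,\diff x=\tfrac{1}{2}\tfrac{d}{dt}\int q_\eps^2\,\diff x$, and applying the calculation \eqref{sys:calcul_convol} to obtain $\int n_\eps\Delta_x U_{K_\eps}[n_\eps]\,\diff r\,\diff x=-\int|\nabla_x q_\eps|^2\,\diff x$, yields
\begin{equation*}
\tfrac{1}{2}\tfrac{d}{dt}\int q_\eps^2\,\diff x+\Dif\int|\nabla_x q_\eps|^2\,\diff x+\int n_\eps|\nabla_x U_{K_\eps}[n_\eps]|^2\,\diff r\,\diff x=0,
\end{equation*}
from which both \eqref{eq:est1} and a uniform $L^\infty_tL^2_x$ bound on $q_\eps$ follow after integration in time. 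Multiplying instead by $\ln n_\eps+1$, and using the same auto-convolution identity for the cross term $\int\nabla_x n_\eps\cdot\nabla_x U_{K_\eps}[n_\eps]$, produces the Shannon-type balance
\begin{equation*}
\tfrac{d}{dt}\int n_\eps\ln n_\eps\,\diff r\,\diff x+4\Dif\int|\nabla_x\sqrt{n_\eps}|^2\,\diff r\,\diff x+\int|\nabla_x q_\eps|^2\,\diff x=0.
\end{equation*}

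To extract \eqref{eq:sqrtnH1} and \eqref{eq:qH1} I need a lower bound on $\int n_\eps\ln n_\eps$, provided by a Jensen-type inequality against a Gaussian reference density, $\int n\ln n\geq M\ln M-\tfrac{d}{2}M\ln\pi-\int|x|^2 n$, which requires uniform control of the second moment. Testing \eqref{eq:n2} against $|x|^2$ and using Young's inequality gives $\tfrac{d}{dt}\int|x|^2 n_\eps\leq 2d\Dif\|n_\eps\|_{L^1}+\int|x|^2 n_\eps+\int n_\eps|\nabla_x U_{K_\eps}[n_\eps]|^2$, which closes via Grönwall's lemma combined with \eqref{eq:est1} and Assumption~\ref{as:init}. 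Putting these together yields \eqref{eq:sqrtnH1}, \eqref{eq:qH1}, the uniform control of the second moment, and the finiteness of $\int n_\eps|\ln n_\eps|$ (since $\int n_\eps\ln n_\eps$ is bounded above by the entropy identity and below by the Jensen bound).

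The remaining $L^p$ estimates are standard interpolation: Gagliardo-Nirenberg applied to $\sqrt{n_\eps(t,r,\cdot)}$ in $\R^d$ gives $\int n_\eps^{1+2/d}\,\diff x\leq C\|n_\eps(t,r,\cdot)\|_{L^1}^{2/d}\|\nabla_x\sqrt{n_\eps}\|_{L^2}^2$, which integrates in $r$ and $t$ to \eqref{eq:nLp} using \eqref{eq:nL1} and \eqref{eq:sqrtnH1}; the factorisation $\nabla_x n_\eps=2\sqrt{n_\eps}\,\nabla_x\sqrt{n_\eps}$ combined with Hölder's inequality with conjugate exponents $(\tfrac{2(d+2)}{d},2)$ then delivers \eqref{eq:DnLp}. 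The main structural obstacle is the derivation of the two energy identities: both the closed-form dissipation of $|\nabla_x q_\eps|^2$ in the entropy balance and the identity for $\int n_\eps|\nabla_x U_{K_\eps}[n_\eps]|^2$ rely fundamentally on the auto-convolution form and would fail for a generic kernel.
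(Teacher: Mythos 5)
Your proposal is correct and follows essentially the same strategy as the paper: positivity and mass conservation, the Rao-type identity (testing the equation for $n_\eps$ with $U_{K_\eps}[n_\eps]$ is equivalent to the paper's computation on the equation satisfied by $q_\eps$), the second-moment bound closed with \eqref{eq:est1} (your Young-plus-Gr\"onwall version is a harmless simplification of the paper's square-root inequality), the Shannon entropy balance with a Gaussian relative-entropy lower bound in place of the paper's explicit splitting of $\int n_\eps|\ln n_\eps|_-$, and the same Gagliardo--Nirenberg and H\"older interpolations for \eqref{eq:nLp} and \eqref{eq:DnLp}. One small precision: a two-sided bound on $\int n_\eps\ln n_\eps$ does not by itself control $\int n_\eps|\ln n_\eps|$; you also need the negative-part estimate $\int n_\eps|\ln n_\eps|_-\leq \int |x|^2 n_\eps + C$, which follows from the same Gaussian comparison restricted to $\{n_\eps\leq 1\}$ (or the paper's splitting) combined with your second-moment bound, so the fix is immediate with the tools you already have.
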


\begin{proof}
\noindent {\em Step 1. Positivity and $L^1$ bounds~\eqref{eq:nL1} and~\eqref{eq:qL1}.} 

Let us first prove that $n_\eps \geq 0$. We multiply the equation \eqref{eq:n2} by $-\1_{n_\ep { \leq} 0}$ { and use the notation $|n_\ep |_- = - n_\ep \1_{n_\ep \leq 0}$} to get
\[\p_t |n_\ep |_- \leq \nabla_x\cdot \Big ( |n_\ep |_- \nabla_x [\check{\rho}_\eps  *_x q_\eps] \Big) +   \Dif \Delta_x |n_\eps |_- .
\]
The last term is found using the so-called Kato's inequality 
$\Delta f(y) = f''(y) |\nabla_x y|^2 +f'(y) \Delta y \geq f'(y) \Delta y $ for $f(y) = { |y|_-}$ and $y = n_\ep$, { see~\cite{brezis2004kato}}.
Integrating in space gives
\[ \f{\diff}{\diff t}  \int_{\R^d} |n_\ep |_- \leq 0.
\]
Hence $|n_\ep(t,r,x)|_- \leq |n^{ini}_{\varepsilon}(r,x)|_-=0$, for all $(t,r,x)\in { \mathcal X_{T,R}}$, which gives us  positivity. Additionally, integrating~\eqref{eq:n2} in space gives
\[ \frac{\diff}{\diff t} \int_{\R^d} n_\ep (t,r,x) \diff x= \int_{\R^d} \nabla_x\cdot \Big( n_\eps (t,r,x) \nabla_x [\check{\rho}_\eps(r,\cdot)  *_x q_\eps(t,\cdot)](x) \Big) +  \int_{\R^d} \Dif \Delta_x n_\eps(t,r,x)  = 0 . \]
Therefore $ \int_{\R^d} n_\ep (t,r,x) \diff x= \int_{\R^d} n^{ini}_{\varepsilon}(r,x) \diff x \leq \| n_\ep^{ini} \|_{L^\infty\left([0,R];L^1(\R^d)\right)}$ and  we have estimate \eqref{eq:nL1}. Moreover
\[ \begin{aligned}
\| q_\eps \| _{|L^1(\R^d)} &  \leq \int_0^R \| \rho_\eps(r,\cdot)  *_x n_\eps(t,r,\cdot) \| _{|L^1(\R^d)}  \diff r \\
& \leq \int_0^R \| \rho_\eps(r,\cdot) \| _{|L^1(\R^d)} \| n_\eps(t,r,\cdot) \| _{|L^1(\R^d)} \diff r \\
& \leq \| \rho \| _{|L^1([0, R]\times\R^d)} \| n_\eps(t,\cdot,\cdot) \| _{|L^\infty([0,R];L^1(\R^d))},
\end{aligned} \]
so we have estimates \eqref{eq:qL1} uniformly with respect to $\ep$. 

\medskip
\noindent {\em Step 2. $L^2$ bounds~\eqref{eq:qH1} and~\eqref{eq:est1} -- { "Rao-type" entropy inequality}.}
For the  $L^2$ control, we consider the equation satisfied by $q_\eps,$ { obtained by integrating the convolution product of~\eqref{eq:n2} with $\rho_\ep$ on $[0,R]$:} 
\beq \label{eq:q}
\p_t q_\eps (t,\cdot)= \int_0^R \rho_\eps(r,\cdot) *_x \p_t n_\eps (t,r,\cdot) \diff r =  \nabla_x\cdot \Big(  \int_0^R \rho_\eps(r,\cdot) *_x [ n_\eps(t,r,\cdot) \nabla_x \check{\rho}_\eps (r,\cdot) *_x q_\eps ] \diff r\Big ) +  \Dif \Delta_x q_\eps.
\eeq
We multiply equation \eqref{eq:q} by $q_\eps$ and integrate over space
\[ \begin{aligned}
 \f{\diff }{\diff t} \int_{ \R^d} |q_\eps|^2    &=  -  \int_{ \R^d} \int_0^R (\rho_\eps *_x [ n_\eps \nabla_x \check{\rho}_\eps *_x q_\eps ])\diff r \cdot\nabla_x q_\eps -  \Dif \int_{ \R^d} | \nabla_x q_\eps|^2 \\
 &= - \int_{ \R^d}  \int_0^R [ n_\eps \nabla_x  \check{\rho}_\eps *_x q_\eps ]\cdot ( \check{\rho}_\eps *_x \nabla_x q_\eps) \diff r - \Dif \int_{ \R^d} | \nabla_x q_\eps|^2 \\
 &= - \iint_{{ \mathcal X_R}}    n_\eps |\nabla_x \check{\rho}_\eps *_x q_\eps |^2 -  \Dif \int_{ \R^d} | \nabla_x q_\eps|^2 .
 \end{aligned}
\]
Thus by integrating over time
\[ \int_{ \R^d} |q_\eps|^2 + \Dif \int_0^T \int_{ \R^d} | \nabla_x q_\eps|^2  +   \int_0^T \int_{{ \mathcal X_R}}  n_\eps |\nabla_x  \check{\rho}_\eps *_x q_\eps |^2  \leq \int_{ \R^d} |q^0_\eps|^2.
\]
Given that $\int_{ \R^d} |q_\eps(t,\cdot)|^2 \leq R \| \rho_\eps \| _{|{H^1([0,R];L^1(\R^d)}}^{{ 2}} \| n^0_\eps \| _{|L^\infty([0,R];L^2(\R^d))}^{{ 2}} \in L^\infty([0,T])$, it proves the results \eqref{eq:qH1} and \eqref{eq:est1}. 

\begin{remark} This last computation of the $L^2$ norm of $q_\eps$ is equivalent to considering the classical entropy $\int_{\R^d} \int_0^R n_\eps (r,x)  \int_0^R [K_\eps(\cdot, r,s) *_x n_\eps(s,\cdot)](x) \diff s \diff r \diff x${, sometimes called a "Rao-type entropy".}
\end{remark}

\medskip
\noindent {\em Step 3. Second moment control.} 

{ Multiplying~\eqref{eq:n2} by $\vert x\vert^2$ and integrating on $\mathcal X_R$,} we have
\[ \begin{aligned}
 \f{\diff}{\diff t}  \int_{{ \mathcal X_R}} |x|^2 n_\eps  & =  \int_{{ \mathcal X_R}} |x|^2 \nabla_x\cdot [n_\ep  \nabla_x (  \check{\rho}_\eps *  q_\ep)] +  \Dif \int_{{ \mathcal X_R}} |x|^2 \Delta_x n_\ep\\
 & = - \int_{{ \mathcal X_R}} 2 n_\ep x \cdot \nabla_x ( \check{\rho}_\eps *  q_\ep) -\Dif \int_{{ \mathcal X_R}} 2x \cdot \nabla_x n_\ep  \\
 & \leq  2 \Big(\int_{{ \mathcal X_R}} |x|^2 n_\ep \Big)^{1/2} \Big( \int_{{ \mathcal X_R}} n_\ep |\nabla_x   \check{\rho}_\eps * q_\ep|^2\Big)^{1/2} +  2 { d} \Dif \int_{{ \mathcal X_R}} n_\ep 
\end{aligned}
\]
 Hence after integration, denoting ${ V}_\eps(t) \coloneqq \Big(   \int_{{ \mathcal X_R}} |x|^2 n_\eps \Big)^{1/2}$, we have
\[
 { V}^2_\eps(t) \leq { V}^2_\eps(0) +  2\Dif { d} T  \| n_\ep \|_{L^\infty([0,T];L^1( { \mathcal X_R}))} + 2 \int_0^t { V}_\eps(s) \Big( \int_{{ \mathcal X_R}} n_\ep |\nabla_x  \check{\rho}_\eps * q_\ep|^2\Big)^{1/2}(s) \diff s  ,
\]
and thanks to~\cite[Theorem 5]{dragomir},
\[
{ V}_\eps(t) \leq { V}_\eps(0) +   \sqrt{2\Dif  { d} T \| n_\ep \|_{L^\infty([0,T];L^1( { \mathcal X_R}))}} +   \int_0^T \Big( \int_{{ \mathcal X_R}} n_\ep |\nabla_x  \check{\rho}_\eps * q_\ep|^2\Big)^{1/2} . 
\]
Applying the Cauchy-Schwarz inequality to the second member of the right-hand side of the equation and taking the square, we obtain 
\[  \int_{{ \mathcal X_R}} |x|^2 n_\eps \leq   3\int_{{ \mathcal X_R}} |x|^2 n^{ini}_{\varepsilon} +  6\Dif  { d} T  \| n_\ep \|_{L^\infty([0,T];L^1({ \mathcal X_R}))} +  3 T \int_{\mathcal X_{T,R}} n_\ep |\nabla_x  \check{\rho}_\eps * q_\ep|^2  < +\infty
\]
thanks to~the initial condition \eqref{eq:as4}, to~\eqref{eq:nL1} and to~\eqref{eq:est1}.

\medskip
{\em Step 4. $H^1$ norm estimate~\eqref{eq:sqrtnH1} for $\sqrt{n_\eps}$ and entropy estimate.}  We consider the classical { "Shannon-type"} entropy $ \mathcal{E}(n_\eps) \coloneqq \int_{{ \mathcal X_R}} n_\eps \ln n_\eps$ and compute its derivative

\[\begin{aligned}
\f{\diff}{\diff t} \int_{\mathcal X_R} n_\eps \ln n_\eps &= \int_{\mathcal X_R} (1+ \ln n_\eps) \p_t n_\eps
\\
& = - \int_0^R \int_{\R^d} \nabla_x n_\eps \cdot \nabla_x [\check{\rho}_\eps *_x q_\eps] - \Dif \int_{\mathcal X_R}\f{1}{n_\eps} |\nabla_x n_\eps|^2 \\
& = -  \int_{\R^d} | \nabla_x q_\eps |^2 - 4  \Dif \int_{\mathcal X_R}|\nabla_x \sqrt{n_\eps}|^2.
\end{aligned} \]
Then after integrating in time 
\[   \int_{{ \mathcal X_R}} n_\eps |\ln n_\eps| +   \int_0^T  \int_{\R^d}| \nabla_x q_\eps |^2 +  4 \Dif   \int_0^T  \int_{{ \mathcal X_R}} | \nabla_x \sqrt{n_\eps} |^2  \leq \int_{{ \mathcal X_R}} n^0_\eps \ln n^0_\eps + \int_{{ \mathcal X_R}} n_\eps |\ln n_\eps|_-.
\]
Moreover $\int_{{ \mathcal X_R}} n_\eps |\ln n_\eps|_-  \diff r \diff x$ can be decomposed as follows:
\[\int_{{ \mathcal X_R}} n_\eps |\ln n_\eps|_- \diff x \diff r= \int_{{ \mathcal X_R}} n_\eps |\ln n_\eps|_- \1_{n_\ep \geq e^{-|x|^2}} \diff x \diff r + \int_{{ \mathcal X_R}} n_\eps |\ln n_\eps|_- \1_{n_\ep \leq e^{-|x|^2}} \diff x \diff r.
\]
We then bound each term, noticing first that if $n_\ep \geq e^{-|x|^2}$ then $ |\ln n_\eps|_- =-\ln (n_\eps) \1_{n_\ep \leq 1} \leq |x|^2$
\[
\int_{{ \mathcal X_R}}n_\eps |\ln n_\eps|_- \1_{n_\ep \geq e^{-|x|^2}} \diff x \diff r =\int_{{ \mathcal X_R}}n_\eps |\ln n_\eps|_- \1_{n_\ep \geq e^{-|x|^2}}  \diff x
\leq 
\int_{{ \mathcal X_R}} |x|^2 n_\eps \diff x \diff r < +\infty,
 \]
 and, noticing now that if $x\geq 1$ and if $y \leq e^{-|x|^2}$ then $ y |\ln (y)| \leq e^{-|x|^2} |\ln (e^{-|x|^2})|$ because $u\mapsto u|\log(u)|$ is increasing on $(0,e^{-1})$ (with a maximum on $e^{-1}$) so that
 \begin{multline*}
   \int_{  { \mathcal X_R}} n_\eps |\ln n_\eps|_- \1_{n_\ep \leq e^{-|x|^2}}\diff x \diff r 
  \\ \leq  \int_0^R\int_{|x|\leq 1}n_\eps |\ln n_\eps|_- \1_{n_\ep \leq 1} \diff x \diff r
  +
    \int_0^R \int_{ |x|\geq 1} |x|^2 e^{-|x|^2} \diff x \diff r < + \infty.  
 \end{multline*} 
 This gives  $ \nabla_x \sqrt{n_\eps} $ in $ L^2({ \mathcal X_{T,R}})$. Together with estimates \eqref{eq:nL1} we have estimate \eqref{eq:sqrtnH1}. {Notice that we have used the assumption $D>0$ in a crucial way here for \eqref{eq:sqrtnH1}, but that estimate \eqref{eq:qH1} would still hold with $D=0$.}

\medskip
{\em Step 5. $L^{1+\f 2 d}$ norm estimate~\eqref{eq:nLp} for $n_\eps$ and $L^{\f {d+2}{d+1}} $ norm estimate~\eqref{eq:DnLp} for $\nabla_x n_\eps$. }

{ We want to apply the Gagliardo-Nirenberg interpolation inequality on $\R^d$, which states that for $1\leq q, r\leq \infty$ and $\theta\in [0,1]$ such that $\f{1}{p}=\theta(\f{1}{r}-\f{1}{d}) + \f{1-\theta}{q}$ there exists some constant $C$ depending only on $d,q,r,\theta$ such that for any $f:\R^d\to \R$ we have
\[\Vert f \Vert_{L^p(\R^d)}  \leq C \Vert \nabla f \Vert_{L^r(\R^d)}^\theta \Vert  f \Vert_{L^q(\R^d)}^{1-\theta}.\]
We apply it to $f=\sqrt{n_\ep}$, $q=r=2$ and}
 $\theta =\frac{d}{d+2},$ { so that $p=2+\f{4}{d}$,} we integrate on $[0,T]\times[0,R],$ and we have
\[\begin{aligned}
\| n_\eps \|^{1+\f 2 d} _{L^{1+\f 2 d}({ \mathcal X_{T,R}})} &= \int_0^T \int_0^R \| \sqrt{n_\eps} \|^{2+\f 4 d} _{L^{2+\f 4 d}(\R^d)} \\
& \leq C \int_0^T \int_0^R \| \nabla_x \sqrt{n_\eps} \|^{(2+\f 4 d)\theta} _{L^2(\R^d)} \: \| \sqrt{n_\eps} \|^{(2+\f 4 d)(1-\theta)}  _{L^2(\R^d)} \\
& \leq C \int_0^T \int_0^R \: \| \nabla_x \sqrt{n_\eps} \|_{L^2(\R^d)}^{ 2} \: \| \sqrt{n_\eps}  \|^{\f 4 d} _{L^2(\R^d)} \\
& \leq C \: \| \nabla_x \sqrt{n_\eps} \|_{L^2({ \mathcal X_{T,R}})}^{ 2} \: \| n_\eps \|^{\f 2 d}  _{L^{\infty}([0,T]\times[0,R];L^1(\R^d))},
\end{aligned} \]
which { thanks to~\eqref{eq:nL1} and~\eqref{eq:sqrtnH1},} gives \eqref{eq:nLp}. Then given H\"older's inequality \footnote{ applied under the form $\Vert fg\Vert_{L^r}\leq \Vert f\Vert_{L^p} \Vert g\Vert_{L^q}$ if $\f{1}{r}=\f{1}{p}+\f{1}{q}$} with $ \frac{d+1}{d+2} = \frac{1}{2} +\frac{d}{2d+4}$ we have
\[\begin{aligned}
\| \nabla_x n_\eps \|_{L^{\frac{d+2}{d+1}}({ \mathcal X_{T,R}})} & =  2\| \sqrt{n_\eps}  \nabla_x \sqrt{n_\ep} \|_{L^{\frac{d+2}{d+1}}({ \mathcal X_{T,R}})} \\
& \leq 2\| \sqrt{n_\eps} \|_{L^{\frac{2d+4}{d}}({ \mathcal X_{T,R}})} \|  \nabla_x \sqrt{n_\ep} \|_{L^{2}({ \mathcal X_{T,R}})} \\
& \leq 2\| n_\eps \|^{ 2}_{L^{1+\f 2 d}({ \mathcal X_{T,R}})} \| \nabla_x \sqrt{n_\ep}\|_{L^{2}({ \mathcal X_{T,R}})}.
\end{aligned} \]
This provides us with the last inequality~{ \eqref{eq:DnLp}} and concludes the proof.
\end{proof}

\subsubsection{Compactness}\label{sec:compactness}

{The estimates~\eqref{eq:nLp} and~\eqref{eq:DnLp} provide us with sufficient compactness for $n_\ep,$ so that
we now focus on the variable $ { U_{K_\eps}}$}, for which we recall that it can be rewritten as
        $$ { U_{K_\eps}[n_\eps]}(t,r,x) =  \int_0^R [K_\eps(\cdot, r,r') *_x n_\eps(t,r',\cdot)](x) \diff r' =  \check{\rho}_\eps(r,\cdot) *_x q_\eps(t, \cdot) (x)  .$$ 
Given estimate \eqref{eq:qL1}  we have 
\[
     \| {U_{K_\eps}[n_\eps]} (t,\cdot,\cdot)\|_{L^1({ \mathcal X_R})}
\leq \| \check{\rho_\ep}\|_{L^1({ \mathcal X_R})} \| q_\ep (t,\cdot)\|_{L^1(\R^d)} 
\\ \leq  \| \rho\|_{L^1({ \mathcal X_R})} \| q_\ep \|_{L^\infty\left(0,T;L^1(\R^d)\right)} \leq C,
\]
and given estimate~ \eqref{eq:qH1} 
\[\begin{array}{lll}
 \int_{{ \mathcal X_{T,R}}} |{ U_{K_\eps}[n_\eps]} (t,r,x)|^2 \diff r \diff x \diff t
&\leq &\int_0^T \int_0^R \| \check{\rho}_\ep (r,\cdot)\|_{L^1(\R^d)}^2 \| q_\ep (t,\cdot) \|_{L^2(\R^d)}^2 \diff r \diff t
\\
&\leq &\| \rho \|_{L^2\left(0,R;L^1(\R^d)\right)}^2
\|q_\ep \|_{L^2([0,T]\times \R^d)}^2 \leq C
\end{array}
\]
and
\[\begin{array}{lll}
 \int_{{ \mathcal X_{T,R}}} |\nabla_x { U_{K_\eps}[n_\eps]} (t,r,x)^2 \diff x\diff r \diff t
&\leq &\int_0^T \int_0^R \| \check{\rho}_\ep (r,\cdot)\|_{L^1(\R^d)}^2 \| \nabla_x q_\ep (t,\cdot) \|_{L^2(\R^d)}^2 \diff r \diff t
\\
&\leq &\| \rho \|_{L^2\left(0,R;L^1(\R^d)\right)}^2
\|q_\ep \|_{L^2([0,T]; H^1(\R^d))}^2 \leq C
\end{array}
\]
hence uniformly  { with respect to} $\ep$
$$ 
{ U_{K_\eps}[n_\eps]} \in L^\infty([0,T],L^1([0,R]\times\R^d)) \qquad \mbox{ and } \qquad  { U_{K_\eps}[n_\eps]} \in L^2([0,T]\times[0,R],H^1(\R^d)) .
$$ 
In addition we have $\p_r { U_{K_\eps}[n_\eps]} = (\p_r \check{\rho}_\eps) *_x q_\eps $, thus thanks to assumption \eqref{eq:as3bis} 
 \[\begin{array}{lll}
 \int_{{ \mathcal X_{T,R}}} |\partial_r { U_{K_\eps}[n_\eps]} (t,r,x)|^2 \diff r\diff x \diff t
&\leq &\int_0^T \int_0^R \| \partial_r\check{\rho}_\ep (r,\cdot)\|_{L^1(\R^d)}^2 \| q_\ep (t,\cdot) \|_{L^2(\R^d)}^2 \diff r \diff t
\\
&\leq &\| \partial_r \rho \|_{L^2\left(0,R;L^1(\R^d)\right)}^2
\|q_\ep \|_{L^2({ \mathcal X_{T,R}})}^2 \leq C,
\end{array}
\] so finally
\beq \label{eq:pH1}
{ U_{K_\eps}[n_\eps]} \in L^2([0,T],H^1([0,R] \times \R^d)) .
\eeq 
Therefore we have immediately
\beq \label{es:WKF}
\begin{array}{cc} 
    &  { U_{K_\eps}[n_\eps]} \in L^1_{\mbox{\scriptsize{loc}}}({ \mathcal X_{T,R}}),  \\
    &  \nabla_x { U_{K_\eps}[n_\eps]} \in L^1_{\mbox{\scriptsize{loc}}}({ \mathcal X_{T,R}}),  \\
    &  \p_r { U_{K_\eps}[n_\eps]} \in L^1_{\mbox{\scriptsize{loc}}}({ \mathcal X_{T,R}}) , 
\end{array}
\eeq
and { writing the equation satisfied by $U_{K_\ep},$ obtained by integrating in $[0,R]$ the convolution product of~\eqref{eq:n2} with $K_\ep,$ we have}
\beq
\p_t { U_{K_\eps}[n_\eps]} = \nabla_x\cdot \Big( \underbrace{\int_0^R K_\eps(\cdot, r,r') *_x \Big[ n_\eps(t,r',\cdot)  \nabla_x  { U_{K_\eps}[n_\eps]}(t,r',\cdot) + \Dif \nabla_x { n_\eps(t,r',\cdot)}  \Big] (x) \diff r' }_{P_\ep(t,r,x)}\Big).
\eeq
We now prove that $P_\ep(t,r,x) \in L^1({ \mathcal X_{T,R}})$  uniformly  { with respect to} $\ep$, since:
\begin{align*}
    \| P_\ep\|_{L^1({ \mathcal X_{T,R}})} & \leq \int_0^T \int_0^R \int_{\R^d} \int_0^R  |K_\eps(\cdot, r,r') | *_x \Big| n_\eps(t,r',\cdot)  \nabla_x { U_{K_\eps}[n_\eps]}(t,r',\cdot)  + \Dif \nabla_x { n_\eps(t,r',\cdot)}  \Big|(x) \diff r' \diff x \diff r \diff t  
    \\
    &  \leq \int_0^T \int_0^R \int_0^R \| K_\eps(\cdot , r,r')\|_{L^1(\R^d)} \Big[ \| n_\eps(t,r',\cdot)  \nabla_x { U_{K_\eps}[n_\eps]}(t,r',\cdot) \|_{L^1(\R^d)} 
    \\ &\qquad + \Dif  \| \nabla_x { n_\eps(t,r',\cdot)} \|_{L^1(\R^d)} \Big] \diff r'\diff r \diff t
    \\
    &  \leq R \| K(\cdot , r,r')\|_{L^\infty([0,R]^2,L^1(\R^d))} \Big[  \| n_\eps \|_{L^\infty([0,T]\times[0,R],L^1(\R^d))}^{\f 1 2}  \| n_\eps |\nabla_x {\check{\rho}}_\eps *_x q_\eps |^2 \|_{L^1({ \mathcal X_{T,R}})}^{\f 1 2} 
    \\
     & \qquad + { 2 D} \| \sqrt{ { n_\eps(t,r',x)}} \|_{L^2({ \mathcal X_{T,R}})} \| \nabla_x \sqrt{ { n_\eps(t,r',x)}} \|_{L^2({ \mathcal X_{T,R}})} \Big],
\end{align*}
{ where we have done two Cauchy-Schwarz inequalities, writing respectively 
\[n_\ep \nabla_x U_{K_\ep} =n_\ep^{\f{1}{2}} n_\ep^{\f{1}{2}} \nabla_x (\check{\rho}_\ep * q_\ep),\qquad \text{and} \qquad\nabla_x n_\ep = 2\sqrt{n_\ep} \nabla_x \sqrt{n_\ep}.\]
Finally, the right-hand side is uniformly bounded with respect to $\ep$
thanks to the assumption~\eqref{eq:as1} and to the estimates~\eqref{eq:nL1},~\eqref{eq:est1} and~\eqref{eq:sqrtnH1}.}

Then, with the same  method as for the multispecies case~\cite[Section 3.2.]{doumic2024multispecies}, we can show that for a compact set $K \subset { [0,R]\times}\R^d$ and for any $k>0$, we have
\begin{multline} \label{eq:Comp_t}
\Vert { U_{K_\eps}[n_\eps]}(t+k,r,x) - { U_{K_\eps}[n_\eps]}(t,r,x) \Vert_{L^1((0,T)\times K)} 
\\ \leq C(K,T) k \Vert { U_{K_\eps}[n_\eps]} \Vert_{L^2((0,T), H^1( K))} \leq C(K,T){ \sqrt{k}}.
 \end{multline}
 
 Then, given \eqref{es:WKF}, ${ U_{K_\eps}[n_\eps]}$ satisfies the assumptions of the Weil-Kolmogorov-Frechet theorem on $L^1((0,T)\times K),$ hence the sequence ${ U_{K_\eps}[n_\eps]}$ is compact in this space. 

\subsubsection{Convergence}\label{sec:convergence}

Given the previous estimates, we can extract a subsequence (we do not change the notation for the sake of clarity) such that
\begin{align} 
 { U_{K_\eps}[n_\eps]} & \rightarrow U_0 \quad \mbox{ strongly in } \quad L^1((0,T)\times(0,R);L^1_{\mbox{\scriptsize{loc}}}(\R^d)),
 \label{sys:lim_L1strong}\\ 
 n_\ep  & \rightharpoonup  n_0 \quad \mbox{ weakly in } \quad L^{1+\f 2 d}({ \mathcal X_{T,R}}), \label{sys:lim_nL1weak} \\
\nabla_x n_\ep  & \rightharpoonup  \nabla_x  n_0 \quad \mbox{ weakly in } \quad L^{\f{d+2}{d+1}}({ \mathcal X_{T,R}}) \label{sys:lim_nH1weak}.
\end{align}
 In addition, from the uniform bound  ${ U_{K_\eps}[n_\eps]} \in L^2({ \mathcal X_{T,R}})$ thanks to~\eqref{eq:pH1}, and the strong limit \eqref{sys:lim_L1strong}, we have
 \beq \label{sys:lim_Lpstrong}
 {U_{K_\eps}[n_\eps]}   \rightarrow U_0 \quad \mbox{ strongly in } \quad L^p((0,T){ \times (0,R)};L^
q_{\mbox{\scriptsize{loc}}}(\R^d)) \quad \forall 1\leq p < 2, \; 1\leq q < 2.
 \eeq

 We are left to show the convergence in equation \eqref{eq:n}. Let us define $\phi \in C_c^\infty({ \mathcal X_{T,R}})$. Then multiplying equation \eqref{eq:n} by $\phi$ and integrating by parts, we have 
\begin{align*}
 \int_{\mathcal X_{T,R}}  n_\eps \p_t \phi &=  \int_{\mathcal X_{T,R}}  n_\eps \nabla_x { U_{K_\eps}[n_\eps]} \cdot \nabla_x \phi + \Dif \int_{\mathcal X_{T,R}}   \nabla_x n_\eps \cdot  \nabla_x \phi \\ 
 & =  - \int_{\mathcal X_{T,R}} \left(\nabla_x n_\eps \cdot \nabla_x \phi \right)  {U_{K_\eps}[n_\eps]}  - \int_{\mathcal X_{T,R}}  n_\eps \left(\Delta_x  \phi  \right) { U_{K_\eps}[n_\eps]}  + \Dif \int_{\mathcal X_{T,R}}  \nabla_x n_\eps \cdot  \nabla_x \phi.
 \end{align*}
 The left-hand side converges thanks to the weak convergence \eqref{sys:lim_nL1weak} of $n_\eps$. The first and second terms of the right-hand side converge thanks to the strong convergence \eqref{sys:lim_Lpstrong}  of ${ U_{K_\eps}[n_\eps]}$ and the weak convergence \eqref{sys:lim_nH1weak} of $\nabla_x n_\eps$ and \eqref{sys:lim_nL1weak} of $ n_\eps$  respectively. Finally, the last term converges thanks to the weak convergence \eqref{sys:lim_nH1weak} of $\nabla_x n_\eps$. Then passing to the limit and integrating by part, we have
\beq \label{eq:eqlim1}
\int_0^T \int_0^R \int_{\R^d}  n_0 \p_t \phi =  \int_0^T \int_0^R \int_{\R^d}  n_0 \nabla_x U_0 \cdot \nabla_x \phi + \Dif \int_0^T \int_0^R \int_{\R^d}   \nabla_x n_0 \cdot  \nabla_x \phi .
\eeq

Finally we are left to show that $U_0 = \int_0^R {{ \Gamma}(r,s) }n_0{ (t,s,x)\diff s}$. Let us define $\phi \in C_c^\infty({ \mathcal X_{T,R}})$, then
$$
\begin{aligned}
     & \Big| \int_{\mathcal X_{T,R}} \Big ({ U_{K_\eps}[n_\eps]}(t,r,x) - \int_0^R  { \Gamma}(r,s) n_0(t,s,x) \diff s \Big ) \phi(t,r,x) \diff x \diff r \diff t \Big|\\
     & \qquad \leq  \Big| \int_{\mathcal X_{T,R}}\Big ( \int_0^R  [K_\eps(\cdot, r,s) *_x n_\eps(t,s,\cdot)](x) - { \Gamma}(r,s) n_0(t,s,x)  \diff s \Big ) \phi(t,r,x) \diff x \diff r \diff t \Big| \\
     & \qquad \leq  \Big|\int_0^T \int_0^R \int_0^R \int_{\R^d}  \Big( [K_\eps(\cdot, r,s) *_x n_\eps(t,s,\cdot)](x) -  { \Gamma}(r,s)n_\eps(t,s,x) \Big) \phi(t,r,x) \diff x \diff r \diff s \diff t \Big| \\
      & \qquad \qquad + \Big| \int_0^T \int_0^R \int_0^R \int_{\R^d}  \Big( n_\eps(t,s,x) - n_0(t,s,x) \Big) { \Gamma}(r,s)\phi(t,r,x) \diff x \diff r \diff s \diff t \Big|
      \\
      & \qquad \leq  \underbrace{ \int_0^T \int_0^R \int_0^R \int_{\R^d}  \Big| [K_\eps(\cdot, r,s) *_x\phi(t,\cdot,r)](x) - { \Gamma}(r,s)\phi(t,r,x) \Big| n_\eps(t,x,s) \diff x \diff r \diff s \diff t }_{I_\eps}  \\
      & \qquad \qquad + \underbrace{ \int_0^T \int_0^R \int_0^R \int_{\R^d}  \Big| n_\eps(t,s,x) - n_0(t,s,x) \Big||{ \Gamma}(r,s)| | \phi(t,r,x)| \diff x \diff r \diff s \diff t }_{{II}_\eps}
\end{aligned}
$$
For the first term, applying H\"older's inequality with $1 =  \frac{2}{2+d}+\frac{d}{2+d}$ we have
  $$
  I_\eps  \leq \| [K_\eps(\cdot, r,s) *_x\phi(t,\cdot,r)](x) -  { \Gamma}(r,s) \phi(t,r,x) \|_{L^{\frac{2+d}{{ 2}}}([0,T]\times[0,R]^2\times\R^d)} \, R \| n_\eps\|_{L^{\frac{2+d}{{ d}}}({ \mathcal X_{T,R}})}.
$$
Since $\phi \in C_c^\infty({ \mathcal X_{T,R}})$ and $\int_{\R^d} K_\eps(x, r,s) \diff x={ { \Gamma}(r,s)} $, we have for all $(r,s)\in[0,R]^2$, thanks to the continuity in $r$ and $s$ of ${ \Gamma}$ linked to Assumption~\eqref{eq:as3bis}
  $$ [K_\eps(\cdot, r,s) *_x\phi(t,r,\cdot)] \rightarrow {{ \Gamma}(r,s)} \phi(t,r,\cdot)  , \qquad \mbox{ in } L^\frac{2+d}{{ 2}}(\R^d).$$
In addition 
$$  \|[K_\eps(\cdot, r,s) *_x\phi(t,\cdot,r)](x) - {{ \Gamma}(r,s)}\phi(t,r,x)\|_{L^{\frac{2+d}{d}}(\R^d)} \leq { 2 \|{ \Gamma}\|_{L^\infty([0,R]^2)}} \|\phi(t,\cdot,r)\|_{L^{\frac{2+d}{2}}(\R^d)}  \in L^1([0,T]\times[0,R]^2.)  $$
Thus thanks to the dominated convergence theorem,
 $$  \| [K_\eps(\cdot, r,s) *_x\phi(t,r,\cdot)](x) - { { \Gamma}(r,s)} \phi(t,r,x) \|_{L^{\frac{2+d}{2}}([0,T]\times[0,R]^2\times\R^d)} \rightarrow 0 ,$$
 and since $ n_\eps \in L^{1+{ \f 2d}}({ \mathcal X_{T,R}})$ uniformly  { with respect to} $\eps$ by~\eqref{eq:nLp} we have $  I_\eps\rightarrow 0 $.
 
The second term can be rewritten as
  $$ {II}_\eps  \leq \|{ \Gamma}\|_{L^\infty([0,R]^2)}  \int_0^T \int_0^R \int_{\R^d}  \Big| n_\eps(t,s,x) - n_0(t,s,x) \Big|   \Big( \int_0^R| \phi(t,r,x)| \diff r \Big) \diff x \diff s \diff t ,
  $$
  with $ \int_0^R| \phi(t,r,x)| \diff r \in L^{1+\f d2}({ \mathcal X_{T,R}})$, therefore thanks to the weak convergence \eqref{sys:lim_nL1weak} we have $  {II}_\eps\rightarrow 0 $ and $p_0  (t,r,x)= \int_0^R  { \Gamma}(r,s) n_0 (t,x,s)\diff s$. Thus given \eqref{eq:eqlim1}, $n_0$ is solution of \eqref{eq:n_0}, { and this ends the proof of Theorem~\ref{TH1}}.


\

 We can now deduce formally the localisation limit of the complete model \eqref{scaledmac}. Denoting by $u_0(t,r,x)$ the formal limit of $u_\varepsilon(t,r,x)$ solution of \eqref{scaledmac}, we  deduce that $u_0$ is solution of the local equation~\eqref{mactot}.

\section{Numerical simulations} \label{Sec4}

\label{section_numerics}
In this section, we numerically investigate the link between the microscopic model of Section~\ref{sec:micro-meso}, the mesoscopic model given by Equation~\eqref{eq:croisfragdiff} and the macroscopic model given by Equation~\eqref{mactot}. For all simulations, we consider a spatial square domain $\Omega \subset \mathbb{R}^2$ taken large enough so that the discrete particles do not go outside the domain in the observation timeframe, and with no-flux boundary conditions for the mesoscopic and macroscopic models.

\subsection{Numerical setting for the microscopic model}
 We throw initially $N_0$ particles  uniformly distributed in a ball centered in the center of the domain and having radius $S$. The $N_0$ individual particle radii are initially chosen randomly from a uniform distribution $\mathcal{U}([r_{\min}, r_{\max}])$. We consider discrete times $t^n = \sum_{i=0}^n \Delta t^i$ with some carefully chosen time step $\Delta t^n$ (see below), and we consider a splitting scheme for the different mechanisms (spatial motion, growth and division). 

\textit{Space motion of particles.}
The equation of motion for the particles given by Equation~\eqref{eq:transport} is solved using an explicit Euler scheme: Given a configuration at time $t^n$ $(X^n_i, R^n_i)_{1\leq i \leq N_n}$, the displacement of the particles during a time step $\Delta t^n$ is given by
$$
X_i^{n+1/2} = X_i^n - \frac{\Delta t^n}{N} \sum_{j=1}^{N_n} \nabla K \big(X_i^n-X_j^n, R_i^n,R_j^n\big) + \sqrt{2 \Dif \Delta t^n} \; \eta_i,
$$
where $\eta_i$ is randomly chosen from a normal distribution $\mathcal{N}(0,1)$. For all simulations, we consider the following interaction potential:
\begin{equation}\label{choiceK}
K(x,r,s)= \frac{\gamma(r)\gamma(s)}{(2\pi(r^2+s^2))^{d/2}} \exp \Big( -\frac{|x|^2}{2(r^2+s^2)} \Big),
\end{equation}
with $\gamma(r) = r$. 

\textit{Particle growth.}
The particle radii are actualized between times $t^n$ and $t^n + \Delta t^n$ according to 
$$
R_i^{n+1/2} = \min(r_{\max}, R_i^n + \Delta t^n g(R_i^n)).
$$
In the following, we consider a constant growth term $g(r) = g$. Notice that the growth law has been truncated to ensure that particle radii do not exceed the threshold $r_{\max}$.

\textit{Particle division.}
The division process is modelled by a Poisson process of radial-dependent frequency $\beta(r)$: for each cell $i$, the probability to divide between time steps $t^n$ and $t^n + \Delta t^n$ is given by
$$
\mathbb{P}{(\text{cell }}i{\text{ divides between }}t^n{\text{ and }}t^n + \Delta t^n) = 1 - e^{- \beta(R_i^{n+1/2}) \Delta t^n}.                                     
$$
We use a rejection method to decide upon the division of a cell based on this probability. To ensure that cell radii do not exceed $r_{\max}$, we virtually consider that $\beta(r_{\max}) = + \infty$, i.e cells of maximal radius divide with probability one. 

When a cell $i$ divides, its radius and position are actualized according to
$$\begin{cases}
X_i^{n+1} \leftarrow X_i^{n+1/2} - \alpha \frac{R_i^{n+1/2}}{\sqrt{2}} (\cos(\theta_i), \sin(\theta_i))\\
R_i^{n+1} \leftarrow \frac{R_i^{n+1/2}}{\sqrt{2}},
\end{cases}
$$
where $\theta_i$ is chosen randomly from a uniform distribution $\mathcal{U}([0,2\pi]$. Simultaneously, a new cell of radius $\frac{R_i^{n+1/2}}{\sqrt{2}}$ is created at position $X_i^{n+1/2} + \alpha \frac{R_i^{n+1/2}}{\sqrt{2}} (\cos(\theta_i), \sin(\theta_i))$. 

The positions and radii of all the cells $k$ that were not subjected to division in between $t^n$ and $t^n + \Delta t$ are then set to 
$$
\begin{cases}
X_k^{n+1} \leftarrow X_k^{n+1/2} \\
R_k^{n+1} \leftarrow R_k^{n+1/2}.
\end{cases}
$$

We choose the fragmentation rate function $\beta(r)$ such that
$$
\beta(r) = \begin{cases}
0 \qquad \qquad \qquad \text{   for $r<\sqrt{2}\; r_{\min}$}\\
\bar{\beta} \frac{r-\sqrt{2}\; r_{\min}}{r_{\max} - \sqrt{2}\; r_{\min}} \; \text{   for $r\in [\sqrt{2}\; r_{\min}, r_{\max})$}\\
+ \infty \qquad \qquad \quad \text{for $r\geq r_{\max}$}
\end{cases}
$$
Finally, we ensure that the motion of particles during two time steps does not exceed a given threshold by using an adaptative time step. More specifically, we set
$$
\Delta t^n = \min\big(\frac{\delta}{{ \frac{1}{N_0}}\max_i |\sum_{j=1}^{N(t)} \nabla { K} (|X_i^n-X_j^n|, R_i^n,R_j^n|)|}, \frac{\delta^2}{4D}, \frac{0.1r_{\min}}{||\beta||_{L^1}}, \frac{0.1r_{\min}}{2g}  \big).
$$

The numerical parameters used for the microscopic simulations are summarized in Table~\ref{tab:micro_param}. 

\begin{table}
    \centering
    \begin{tabular}{|c|c|c|}
         \hline
    Parameters & Value & Description\\
    \hline
         $x_{\min}$ & -50 & Minimal domain boundary in x- and y- directions\\
         $x_{\max}$ & 50 & Maximal domain boundary in x- and y- directions\\
         $r_{\min}$ & 0.2 & Minimal cell radius\\
         $r_{\max}$ & 1 & Maximal cell radius\\
         $N_0$ &  $[500, 5000]$& Initial number of cells\\
         $\Dif$ & $0.01$ & Diffusion coefficient\\
         $g$ & 0.008 & Growth rate\\
         $\bar{\beta}$ & 0.05 & Maximal division rate (for $r<r_{\max}$)\\
         $\alpha$ & 0.1 & Position of the daughter cells after division\\
         $T_f$ & 100 & Simulation time\\
         $S$ & 2 & Radial support of the initial condition\\
         \hline
    \end{tabular}
    \caption{Model parameters for the microscopic simulations}
    \label{tab:micro_param}
\end{table}

\subsection{Numerical settings for the meso and macro models} For the mesoscopic and macroscopic models, the spatial domain $\Omega$ is discretized into $N_x \times N_x$ regularly spaced points with space step $\Delta x$ and the radial domain $[r_{\min}, r_{\max}]$ is discretized into $N_r$ regularly spaced points with radial step $\Delta r$.

We consider discretes times $t_n = n \Delta t$ for $n\geq 0$ and introduce a Cartesian 3D mesh consisting of the cells $C_{ijk} = [x_{i-1/2}, x_{i+1/2}] \times [y_{j-1/2}, y_{j+1/2}] \times [r_{k-1/2}, r_{k+1/2}]$ which for the sake of simplicity we consider of uniform size, i.e for which $x_{i+1/2} - x_{i-1/2} = \Delta x \;  \forall i$, $y_{j+1/2} - y_{j-1/2} = \Delta x \; \forall j$ and $r_{k+1/2} - r_{k-1/2} = \Delta r \; \forall k$. 

{ Let $u$ be a solution of the mesoscopic equation \eqref{scaledmac} or the macroscopic equation \eqref{mactot}. Because in the microscopic model the parameter $\alpha$ is chosen small, the non-local term of the mesoscopic model can be considered local for the numerical simulations. Thus, both equation can be rewritten under the form:
$$\begin{array}{ll} \partial_t u + \partial_r (g(r) u) - &\nabla_x\cdot \Big( u(t,r,x) \nabla_x \,\xi \Big) - {\Dif} \Delta_x u \\
 &= 2^{1+\f{1}{d}}\beta(2^{\f{1}{d}}r) u(t, 2^{\f{1}{d}}r,x) - \beta(r) u(t,r,x). \end{array} $$
 We define
$$
\bar{u}_{i,j,k}(t) \approx \frac{1}{\Delta x \Delta y \Delta r} \iiint_{C_{i,j,k}} u(t,r,x) dx dy dr 
$$
as the cell averages of the calculated solution. A general semi-discrete finite-volume scheme for these equations can be written in the form}
\begin{align}
    \frac{\diff \bar{u}_{i,j,k}}{\diff t} = & { -}\frac{F^x_{i+1/2,j,k} - F^x_{i-1/2,j,k}}{\Delta x} { -} \frac{F^y_{i,j+1/2,k} - F^y_{i,j-1/2,k}}{\Delta y} \nonumber\\
    &  - \frac{G_{i,j,k+1/2} - G_{i,j,k-1/2}}{\Delta r} + 2^{1+\f{1}{d}} \beta(r_{\tilde{k}}) \bar{u}_{i,j,{\tilde{k}}} - \beta(r_k) \bar{u}_{i,j,k}. \label{scheme_macro}
\end{align}
 and $F^x_{i+1/2,j,k}, F^y_{i,j+1/2,k}, G_{i,j,k+1/2}$ the upwind numerical fluxes at the interfaces in the x-, y- and r- direction which approximate the continuous fluxes $-u(t,r,x) \partial_x \xi - \Dif \partial_x u$, $-u(t,r,x) \partial_y \xi - D \partial_y u$ and $g(r) u(t,r,x)$, respectively. To obtain formulae for numerical fluxes, we first construct piecewise constant functions in each cell $C_{i,j,k}$: 
$$
\tilde{u} (r,x) = \bar{u}_{i,j,k}, \; (r,x) \in C_{i,j,k}.
$$
Equipped with this reconstructed $\tilde{u}_{i,j,k}(r,x)$, the upwind fluxes are computed as
\begin{align*}
    F^x_{i+1/2,j,k} &= u_{i,j,k} \max(0,v^x_{i+1/2,j,k}) + u_{i+1,j,k} \min(0,v^x_{i+1/2,j,k}) - \Dif \frac{u_{i+1,j,k}-u_{i,j,k}}{\Delta x}\\
    F^y_{i,j+1/2,k} &= u_{i,j,k} \max(0,v^y_{i,j+1/2,k}) + u_{i,j+1,k} \min(0,v^y_{i,j+1/2,k}) - \Dif \frac{u_{i,j+1,k}-u_{i,j,k}}{\Delta y}\\
    G_{i,j,k+1/2} &= g u_{i,j,k}, 
\end{align*}
where we have used the fact that the growth term is a positive constant $g \geq 0$, and where
\begin{align*}
v^x_{i+1/2,j,k} = { -}\frac{\xi_{i+1,j,k} - \xi_{i,j,k}}{\Delta x}\\
v^y_{i,j+1/2,k} = { -}\frac{\xi_{i,j+1,k} - \xi_{i,j,k}}{\Delta y}.
\end{align*}
{ In the macroscopic model,} $\xi_{i,j,k}$ is a numerical approximation of $\int_{r_{\min}}^{r_{\max}} { \Gamma}(r_k,r') \tilde{u}(t,r',x_i,y_j,s) \diff r'$ at the point $(x_i,y_j,r_k)$ given by 
$$
\xi_{i,j,k} = \Delta r \sum_{\ell} { \Gamma}(r_k,r_{\ell}) \bar{u}_{i,j,{\ell}},
$$
{ and in the mesoscopic model $\xi_{i,j,k}$ is a numerical approximations of $\int_{r_{\min}}^{r_{\max}} (K^{\eps}(r_k,r',\cdot,\cdot) *_x\tilde{u}(t,r',\cdot,\cdot))(x_i,y_j) dr'$ at point $(x_i,y_j,r_k)$ given by
$$
\xi_{i,j,k} = \Delta r \sum_{\ell} \sum_{p,q} K(r_k,r_{\ell},x_p,y_q) \bar{u}_{i-p+1,j-q+1,{\ell}}.
$$
}
As mentionned earlier, we consider no-flux boundary conditions by setting $F^x_{1/2,j,k} = F^x_{N_x+1/2,j,k} = F^y_{i,1/2,k} = F^y_{i,N_x+1/2,k} = 0$, $\forall i,j,k$. As for the radii, we consider zero-flux boundary conditions at $r_{\min}$ and $r_{\max}$ to ensure mass conservation. This is accounted for by taking the fluxes at the interfaces in the $r$-direction such that 
$$
G_{i,j,N_r+1/2} = G_{i,j,1/2} = 0.
$$
\textit{Treatment of the fragmentation term}. The second fragmentation term corresponds to the approximation of $\frac{1}{\Delta x \Delta y \Delta r} \iiint_{C_{i,j,k}} \beta(r) \tilde{u}(t,r,x) dxdr$, where we have approximated $\int_{r_{k-1/2}}^{r_{k+1/2}} \beta(r) dr \approx \Delta r \beta(r_k)$. The integral $\frac{1}{\Delta x \Delta y \Delta r} \iiint_{C_{i,j,k}} \beta(\sqrt{2} r) \tilde{u}(t,\sqrt{2}r,x) dxdr$ from Equation~\eqref{mactot} requires a bit more attention. By performing a change of variable $s = \sqrt{2} r$, we have:
\begin{align*}
    \frac{1}{\Delta x \Delta y \Delta r} \iiint_{C_{i,j,k}}& \beta(\sqrt{2} r) \tilde{u}(t,\sqrt{2}r,x) dxdydr \\
    &= \frac{1}{\sqrt{2} \Delta x \Delta y \Delta r} \int_{x_{i-1/2}}^{x_{i+1/2}} \int_{y_{j-1/2}}^{y_{j+1/2}} \int_{\sqrt{2} r_{k-1/2}}^{\sqrt{2} r_{k+1/2}} \beta(r) \tilde{u}(t,r,x) dxdydr\\
    & \approx \beta(r_{\tilde{k}}) \bar{u}_{i,j,\tilde{k}},
\end{align*}
where $\tilde{k}$ is the index of the grid in which $\sqrt{2} r_k$ lives, i.e for which $\sqrt{2} r_{k} \in [r_{\tilde{k}-1/2} r_{\tilde{k}+1/2}]$. Notice that more sophisticated methods could be envisionned to account for the different numerical boxes that contain the boundaries $\sqrt{2} r_{k-1/2}$ and $\sqrt{2} r_{k+1/2}$ of the integral, but with our choice of parameters, we find that this approximation is sufficient for our purpose. 

\textit{Initial condition}
For {both the mesoscopic and} the macroscopic model, we  consider the continuum equivalent of the initial condition for the microscopic simulations, i.e a uniform distribution in size and radius. To this aim, we  consider the domain $[x_{min},x_{max}] \times [x_{min},x_{max}] \times [r_{\min}, r_{\max}]$ and set 
$$
u_0(0,x,r) = \frac{1}{\pi S^2(r_{\max}-r_{\min})} \1_{B(0,S)}(x), 
$$
Ensuring that the initial mass is $\iiint_{\Omega \times [r_{\min}, r_{\max}]} u_0(0,x,r) dx dy dr = 1$ so that $u_0$ corresponds to the density distribution of the individual particles initially considered for the microscopic simulations.

Finally, the semi-discrete scheme \eqref{scheme_macro} is discretized in time by the forward Euler method with non-uniform time steps $\Delta t^n$ to account for the CFL condition. The adaptative time-step is chosen such that:
$$
\Delta t^n = \min\bigg(\frac{\delta \min(\Delta x, \Delta y) }{\lVert F^x[u^n] \rVert_{\infty},\lVert F^y[u^n] \rVert_{\infty}},\frac{\Delta x \Delta y}{2 D}, \frac{\Delta r}{\lVert \beta \rVert_{L^1}}, \frac{\Delta r}{2g} \bigg).
$$
The parameters used for simulations of the macroscopic model are be the same as the ones used for the microcopic simulations, summarized in Table~\ref{tab:micro_param}. 

\subsection{Results}
In this section, we numerically explore the three (microscopic, mesoscopic and macroscopic) models in different settings. In Section~\ref{numerics_case1}, we consider the case where particles grow in size but do not divide, in Section~\ref{numerics_case2} we activate the cell division process but without particle growth, and finally Section~\ref{numerics_case3} presents the results of the general model with all ingredients. For comparing the different models, we look at different observables:

\textit{Total density.}
For the macroscopic model, we consider the density $\big(\bar{u}_{i,j,k} \big)_{1\leq i \leq N_x, 1 \leq j \leq N_y, 1\leq k \leq N_r}$ of the  model computed on the numerical grid points, and reconstruct the one from the microscopic simulation according to:
$$\bar{u}^N_{i,j,k} = \frac{1}{N \Delta x \Delta y \Delta r} \#_\ell \{ (i-1) \Delta x \leq X_\ell \leq i \Delta x, (j-1) \Delta y \leq Y_\ell \leq j \Delta y, (k-1) \Delta r \leq R_\ell \leq k \Delta r \}. $$

\textit{Spatial distribution.}
We denote  $u^{\text{spatial}}$ the spatial distribution, obtained as the integral in radii of the total density:
$$
u^{\text{spatial}}(x,t) = \int \tilde{u}(t,r,x) dr,
$$
computed in the microscopic setting to 
$$\bar{u}^{N,\text{spatial}}_{i,j} = \frac{1}{N \Delta x \Delta y} \#_\ell \{ (i-1) \Delta x \leq X_\ell \leq i \Delta x, (j-1) \Delta y \leq Y_\ell \leq j \Delta y \}. $$

\textit{Size distribution.}

We  denote $u^{\text{size}}$  the size distribution, obtained as the integral in space of the total density:
$$
u^{\text{size}}(r,t) = \iint \tilde{u}(t,r,x) dxdy,
$$
computed in the microscopic setting to 
$$\bar{u}^{N,\text{size}}_{k} = \frac{1}{N \Delta r} \#_\ell \{ (k-1) \Delta r \leq R_\ell \leq k \Delta r \}. $$

\textit{Radial density.}
As the particles are spreading radially, an efficient way to characterise the
dynamics is to compute a radial distribution $\big(u^{N,\text{radial}}_{i,k} \big)_{1\leq i \leq N_x, 1 \leq k \leq N_r}$, which gives the average number of cells in rings of size $\Delta x$:
$$
u^{N,\text{radial}}_{i,k} = \frac{1}{N (2i-1) \pi \Delta x^2} \#_\ell \{ (i-1)\Delta x \leq \sqrt{X_\ell^2+Y_\ell^2} \leq i \Delta x, (k-1) \Delta r \leq R_\ell \leq k \Delta r\}.
$$
In the macroscopic setting, this quantity will be referred to as $u^{\text{radial}}_{i,k} = \iint_{(i-1)\Delta x \leq |x| \leq i \Delta x} \tilde{u}(t,r_k,x) dx$.

\subsubsection{Case 1: No particle fragmentation, constant growth}\label{numerics_case1}
{ In this section, we deactivate particle fragmentation and consider a constant growth rate with $g=0.008$. The other parameters are given in Table~\ref{tab:micro_param}. In order to account for the stochasticity of the particle system, we perform 6 simulations for each parameter set and average the densities. 

In Fig.\ref{figgrowthnofrag}, we show the solutions of the three models at times $t=4$ (panel A) and $t=60$ (panel B). In each panel, the first column shows the spatial density $u^{spatial}(x,y,t)$ computed on the three models and plotted as function of the space variables $x$ (abscissa) and $y$ (ordinates). The second column shows the radial density $u^{radial}(\lambda,r,t)$ as function of the radial variable $\lambda$ (abscissa) and size variable $r$ (ordinates). In each column, the first line shows the solution of the microscopic model for $N = 100$ particles, the second line for $N_0=2000$ particles, the third line the solution of the mesoscopic model and the fourth line the solution of the macroscopic model. The figure on the top right represents the size density $u^{size}(r,t)$ as function of the particle size variable $r$, while the bottom right panel shows the radial distribution $u^{radial}(\lambda, t)$ as function of the radial variable $\lambda$. Blue curves are the macroscopic quantities, orange curves the mesoscopic quantities, and the dotted lines the microscopic quantities (yellow for $N_0=100$, green for $N_0 = 500$,  light blue for $N_0=2000$ and red for $N_0 = 4000$). }

\begin{figure}[H]
\centering
   \includegraphics[scale = 0.67]{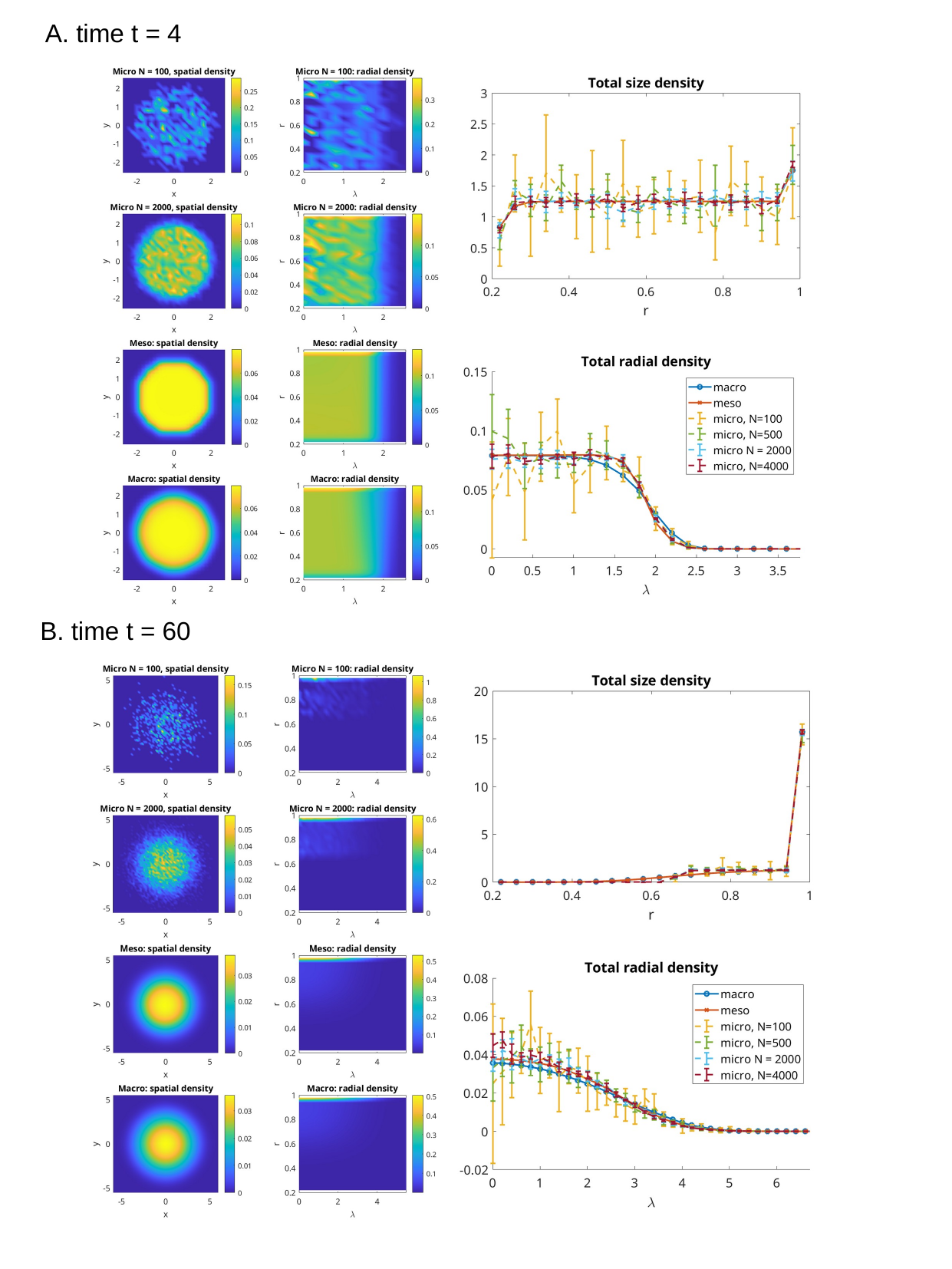}
\vspace{-1cm}
   \caption{Numerical simulations without particle fragmentation and with constant growth, at times $t=4$ (panel A) and $t=60$ (panel B). In each panel, the first column shows the spatial density $u^{spatial}(x,y,t)$ plotted as function of the space variables $x$ (abscissa) and $y$ (ordinates). The second column shows the radial density $u^{radial}(\lambda,r,t)$ as function of the radial variable $\lambda$ (abscissa) and size variable $r$ (ordinates). In each column, the first line shows the solution of the microscopic model for $N = 100$ particles, the second line for $N_0=2000$ particles, the third line the solution of the mesoscopic model and the fourth line the solution of the macroscopic model. The figure on the top right represents the size density $u^{size}(r,t)$ as function of the particle size variable $r$, while the bottom right panel shows the radial distribution $u^{radial}(\lambda, t)$ as function of the radial variable $\lambda$. Blue curves are the macroscopic quantities, orange curves the mesoscopic quantities, and the dotted lines the micro quantities (yellow for $N_0=100$, green for $N_0 = 500$,  light blue for $N_0=2000$ and red for $N_0 = 4000$). \label{figgrowthnofrag}}
\end{figure}

{As one can observe in Fig. \ref{figgrowthnofrag}, we obtain a very good agreement between the simulations of the three models both for the size distributions (top right panel) and for the radial distributions (bottom right panel). Although there is a large variability in the simulations for $N = 100$ particles (yellow errorbar curves), the average density is already quite close to the macroscopic quantities, and the agreement is even better for $N_0=2000$ particles (light blue curves). As expected, the particle radii concentrate at $r_{\max}=1$ because of the growth term, (second column and top right figure in panel B) and we observe a radial spatial diffusion (first columns in panels A and B).} 

{In order to quantify the errors between the models, we plot in Fig. \ref{fig_growthnofragERRORS} the relative $L^1$ errors between the three quantities of interest. 

Given a density $u_b(t,r,x)$, we compute three relative errors between $u_b$ compared with the mesoscopic density $u_{mes}$ defined as:

\begin{align*}
    E_{tot} &= \frac{\lVert u_{mes} - u_{b}\rVert_{L^1(\Omega \times [r_{\min}, r_{\max}])}}{\lVert u_{mes} \rVert_{L^1(\Omega \times [r_{\min}, r_{\max}])}}\\
    E_{spatial} &= \frac{\lVert u^{\text{spatial}}_{mes} - u^{\text{spatial}}_b\rVert_{L^1(\Omega)}}{\lVert u^{\text{spatial}}_{mes}\rVert_{L^1(\Omega)}}\\
        E_{size} &= \frac{\lVert u^{\text{size}} - u^{\text{size}}_b\rVert_{L^1([r_{\min}, r_{\max}])}}{\lVert u^{\text{size}}_{mes}\rVert_{L^1([r_{\min}, r_{\max}])}}
\end{align*}

In Fig. \ref{fig_growthnofragERRORS}, dotted lines are obtained when $u_b$ is the microscopic density for $N_0=100$ (blue curves), $N_0=500$ (orange curves) and $N_0=2000$ (yellow curves), $ N_0=4000$ (purple curves) while the continuous black line are obtained when $u_b$ is the density of the macroscopic model. Note that in any case, we use the density of the mesoscopic model as reference for computing the relative errors. }

\begin{figure}[H]
    \centering
   \includegraphics[scale = 0.8]{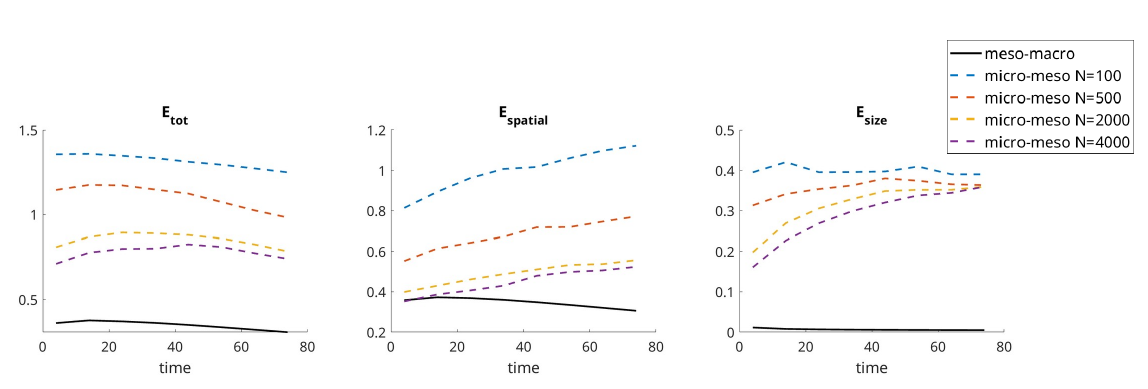}
    \caption{ $L^1$ relative errors between the three quantities of interest as function of time without particle fragmentation and with constant growth: $E_{tot}$ (left figure), $E_{spatial}$ (middle figure) and $E_{size}$ (right figure). Dotted lines are the relative errors between the meso and micro models for $N_0=100$ (blue curves), $N_0=500$ (yellow curves), $N_0=2000$ (yellow curves) and $N_0=4000$ (purple curves). Black continuous lines are the relative errors between the meso and macro models. }
    \label{fig_growthnofragERRORS}
\end{figure}
{As one observes in Fig. \ref{fig_growthnofragERRORS}, all the $L^1$ relative errors between the micro and meso models decrease as the number of particles $N_0$ increases (compare blue, red, yellow and purple dotted lines respectively). These results show that the microscopic model gets closer to the mesoscopic model as $N$ increases. It is noteworthy that the relative errors between the microscopic and mesoscopic models increase in time both when comparing the spatial and the size distributions (dotted lines in the middle and right panels of Fig. \ref{fig_growthnofragERRORS}). This may be due to the fact that microscopic simulations are made with a finite number of particles while the mesoscopic model is obtained in the limit of an infinite number of particles. As time goes, the average distance between particles diminishes because of repulsion, leading to less effective repulsive interactions in the microscopic setting compared to the mesoscopic one. These observations are supported by the fact that the discrepancy between the micro and meso models is slower when increasing the number of particles.     

Moreover, we observe that the relative errors between the macroscopic and the mesoscopic models are very small (black curves). This shows that the macroscopic model is a good approximation of the mesoscopic model in this regime of parameters. The fact that the two models are very close already for $\varepsilon = 1$ (where the meso model features non-local interactions while the macroscopic model is in the limit of local interactions) hints to the fact that linear diffusion dominates the non-local effects due to the repulsive interactions (non-linear diffusion term in the macro model). These observations are supported by the fact that the relative error between the two models decreases in time. Indeed, as time goes particles get farther from each other because of repulsion, decreasing the repulsive interactions for the benefit of linear diffusion. Therefore, it is expected that in time non-local effects vanish and we get a better agreement between the mesoscopic and macroscopic dynamics. We send the interested reader to Appendix~\ref{sec_nogrowthnofrag}, where we illustrate the role of non-local interactions by using a smaller linear diffusion coefficient.
}

\subsubsection{Case 2: No growth, particle division}\label{numerics_case2}
{In this section, we explore the case where particles undergo cell division but no growth. Because of the choice of the fragmentation kernel $\beta$, we expect all particles to end with the minimal radius $r_{\min}$. We adopt the same visualization as in previous section and show in Fig. \ref{fignogrowthfrag} the simulation results for $\Dif=0.01$, no growth and cell division, and in Fig. \ref{fig_nogrowthfragERRORS} the evolution in time of the relative errors between the three models. 
}

\begin{figure}[H]
  \centering \includegraphics[scale = 0.67]{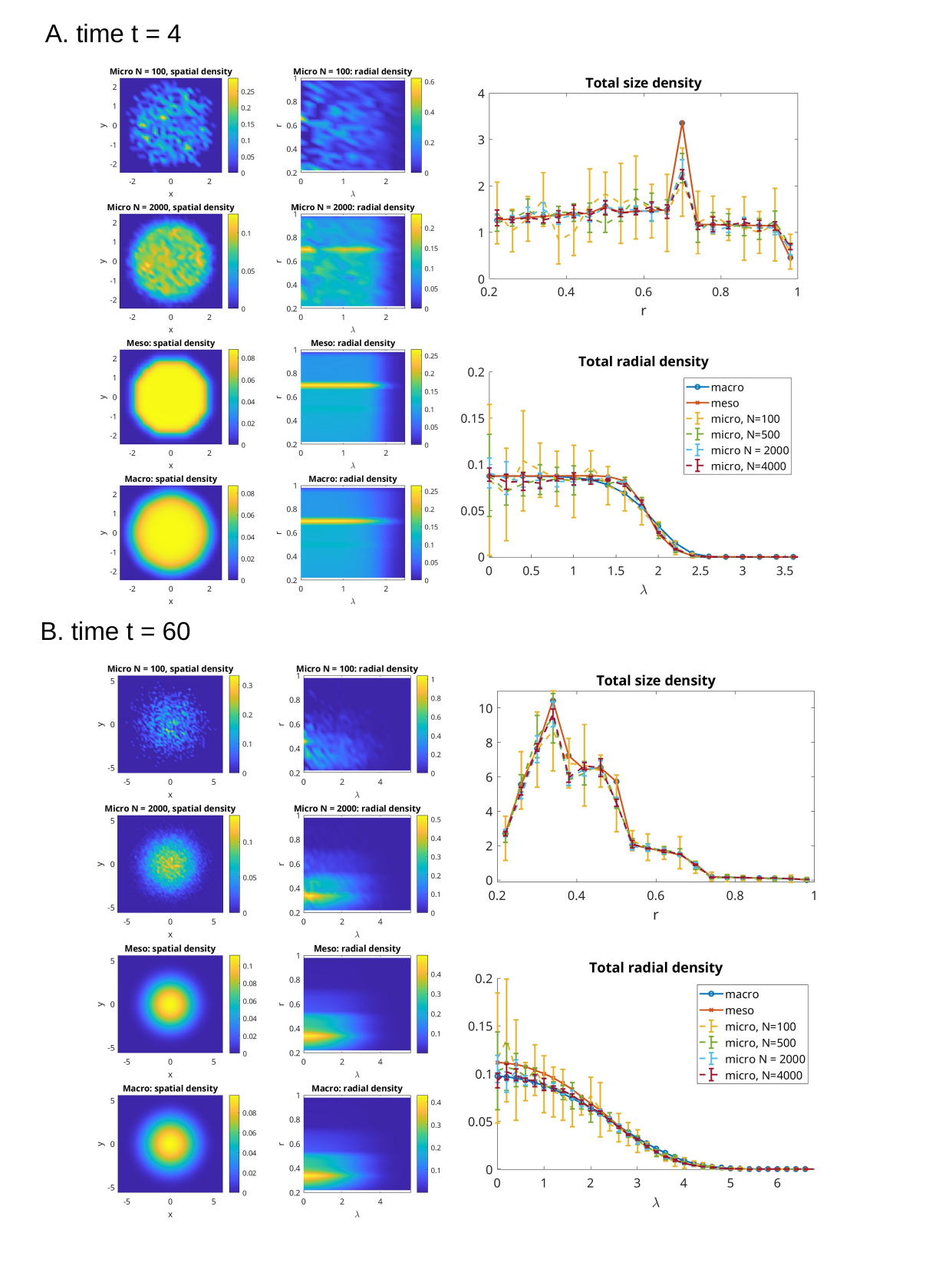}
    \vspace{-1cm}
    
    \caption{Numerical simulations with particle fragmentation and no growth, at times $t=4$ (panel A) and $t=60$ (panel B). In each panel, the first column shows the spatial density $u^{spatial}(x,y,t)$ plotted as function of the space variables $x$ (abscissa) and $y$ (ordinates). The second column shows the radial density $u^{radial}(\lambda,r,t)$ as function of the radial variable $\lambda$ (abscissa) and size variable $r$ (ordinates). In each column, the first line shows the solution of the microscopic model for $N = 100$ particles, the second line for $N_0=2000$ particles, the third line the solution of the mesoscopic model and the fourth line the solution of the macroscopic model. The figure on the top right represents the size density $u^{size}(r,t)$ as function of the particle size variable $r$, while the bottom right panel shows the radial distribution $u^{radial}(\lambda, t)$ as function of the radial variable $\lambda$. Blue curves are the macroscopic quantities, orange curves the mesoscopic quantities, and the dotted lines the micro quantities (yellow for $N_0=100$, green for $N_0 = 500$,  light blue for $N_0=2000$ and red for $N_0 = 4000$). \label{fignogrowthfrag}}.
\end{figure}

\begin{figure}[H]
    \centering
   \includegraphics[scale = 0.8]{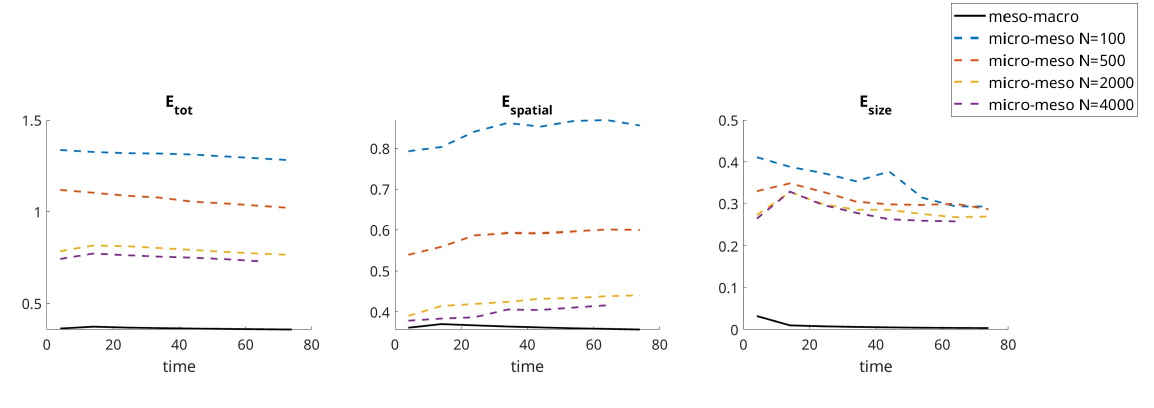}
    \caption{$L^1$ relative errors between the three quantities of interest as function of time with particle fragmentation and without growth: $E_{tot}$ (left figure), $E_{spatial}$ (middle figure) and $E_{size}$ (right figure). Dotted lines are the relative errors between the meso and micro models for $N_0=100$ (blue curves), $N_0=500$ (yellow curves), $N_0=2000$ (yellow curves) and $N_0=4000$ (purple curves). Black continuous lines are the relative errors between the meso and macro models. }
    \label{fig_nogrowthfragERRORS}
\end{figure}

{As one can observe, we again obtain a very good agreement between the micro, meso and macro models, in the temporal evolution of the radius as well as of  the spatial distributions. The distribution in radii (top right panels of Fig. \ref{fignogrowthfrag}) evolves in steps, concentrating successively to the attractive points $\frac{r_{\max}}{\sqrt{2}}$, $\frac{r_{\max}}{2}$, $\frac{r_{\max}}{2\sqrt{2}}$, $\frac{r_{\max}}{4}$, etc. From Fig. \ref{fig_nogrowthfragERRORS}, we observe that the $L^1$ relative error remains constant in time, which suggests that the fragmentation process leads to a longer agreement between the micro and meso models compared to the growth process alone (see previous section). These results can be due to the fact that the fragmentation process creates particles, therefore keeps the number of interactions high, enabling the micro system to spread efficiently. In all cases again, we note that the error decreases as the number of particles $N$ increases, suggesting that the mesoscopic model is a good approximation of the particle dynamics as $N$ increases.
}

\subsubsection{Case 3: Growth, particle division}\label{numerics_case3}
{Finally, we  look at the simulations activating both the growth and fragmentation. As previously, we show in Figs. \ref{figgrowthfrag}-\ref{figgrowthfrag2} the simulation results for $D=0.01$ with growth and cell division for $t=4$, $t=20$ (Fig. \ref{figgrowthfrag}) and $t=60$ (Fig. \ref{figgrowthfrag2}). }

\begin{figure}[H]
   \includegraphics[scale = 0.67]{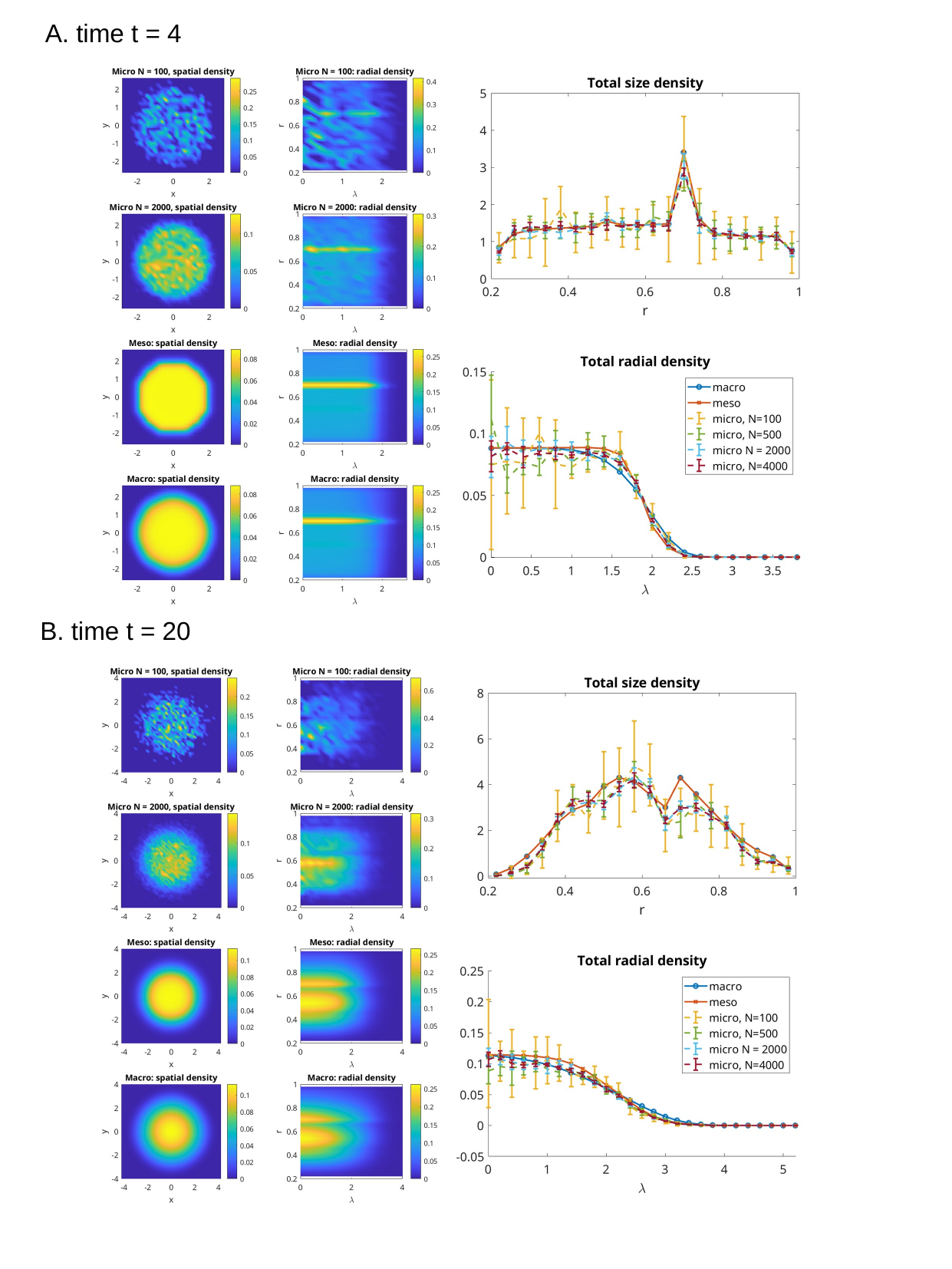}
    \vspace{-1cm}
    
    \caption{Numerical simulations with particle fragmentation and growth, at times $t=4$ (panel A) and $t=20$ (panel B). In each panel, the first column shows the spatial density $u^{spatial}(x,y,t)$ plotted as function of the space variables $x$ (abscissa) and $y$ (ordinates). The second column shows the radial density $u^{radial}(\lambda,r,t)$ as function of the radial variable $\lambda$ (abscissa) and size variable $r$ (ordinates). In each column, the first line shows the solution of the microscopic model for $N = 100$ particles, the second line for $N_0=2000$ particles, the third line the solution of the mesoscopic model and the fourth line the solution of the macroscopic model. The figure on the top right represents the size density $u^{size}(r,t)$ as function of the particle size variable $r$, while the bottom right panel shows the radial distribution $u^{radial}(\lambda, t)$ as function of the radial variable $\lambda$. Blue curves are the macroscopic quantities, orange curves the mesoscopic quantities, and the dotted lines the micro quantities (yellow for $N_0=100$, green for $N_0 = 500$,  light blue for $N_0=2000$ and red for $N_0 = 4000$). \label{figgrowthfrag}}.
\end{figure}

\begin{figure}[H]
   \includegraphics[scale = 0.7]{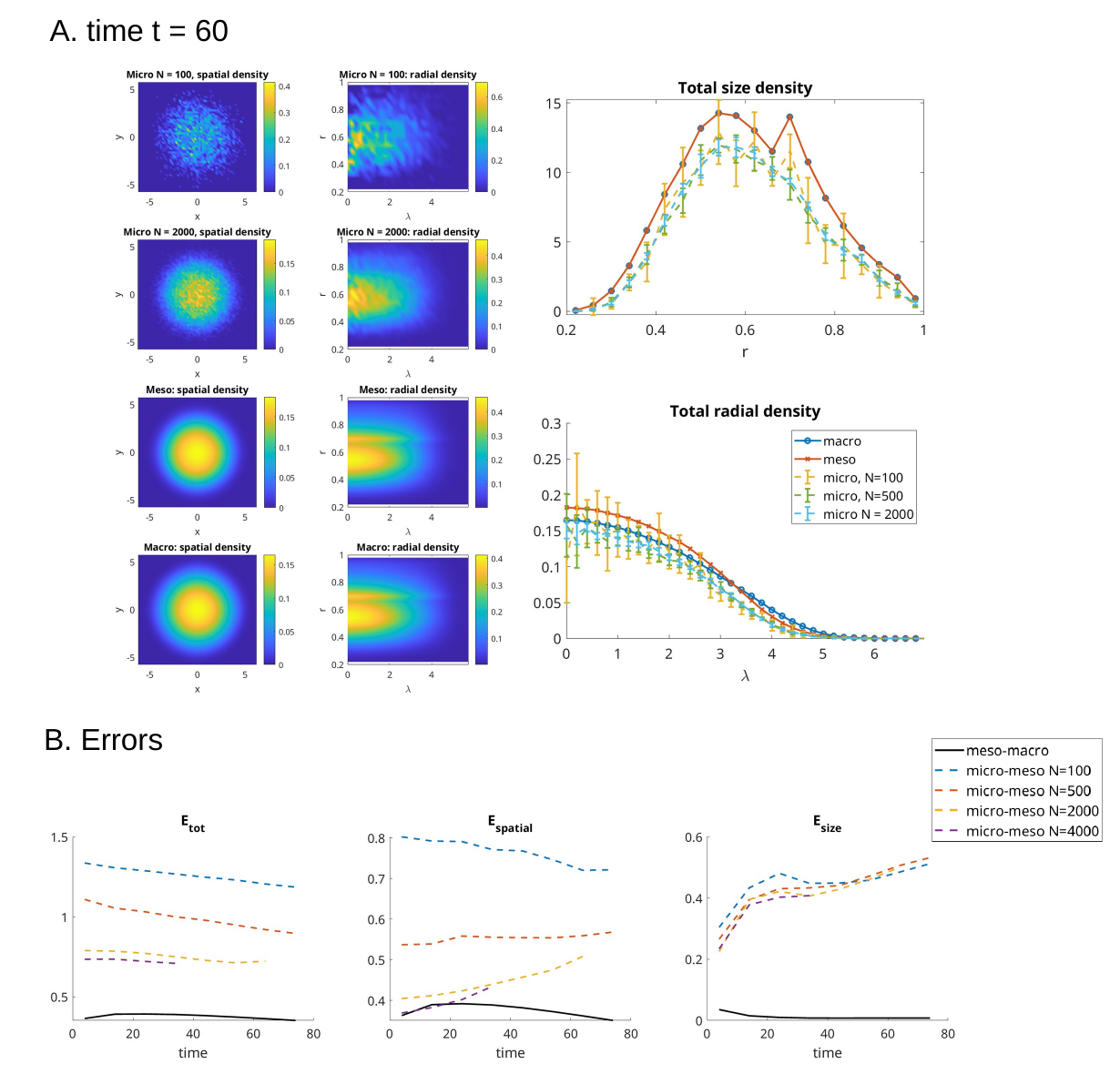}
    \vspace{-0.5cm}
    
    \caption{Panel A: Numerical simulations with particle fragmentation and growth at time $t=60$. Panel B: $L^1$ relative errors between the three quantities of interest as function of time: $E_{tot}$ (left figure), $E_{spatial}$ (middle figure) and $E_{size}$ (right figure). Dotted lines are the relative errors between the meso and micro models for $N_0=100$ (blue curves), $N_0=500$ (yellow curves), $N_0=2000$ (yellow curves) and $N_0=4000$ (purple curves). Black continuous lines are the relative errors between the meso and macro models.  \label{figgrowthfrag2}}
\end{figure}

 {Note that with both growth and fragmentation, the number of particles increases exponentially and so does the computation time. Because of computational efficiency, we stop the simulations when the number of particles exceeds $10000$ in the microscopic setting, corresponding to time $t\approx 64$ for $N_0 = 2000$ and $t\approx 36$ for $N_0 = 4000$. 
From Figs. \ref{figgrowthfrag}, we observe a good agreement between both models at early times of the simulations. The radii distribution reaches the expected profile at $t=20$, but after some time the mesoscopic and macroscopic models produce mass faster than the microscopic dynamics (top right figure of Fig. \ref{figgrowthfrag2} panel A and right figure in panel B). Again, these results are reminiscent of our previous observations. The mesoscopic and macroscopic models are obtained in the limit of infinite number of particles while the microscopic simulations are done with a finite number of particles. The errors in the initial condition are amplified by the growth fragmentation process. 

Altogether, these numerical results suggest that the micro and macro models are in very good agreement at least at early times of the growth fragmentation process, and that the agreement gets better as the number of particles in the microscopic setting $N$ increases. This suggest that the macroscopic model is a good approximation of the underlying macroscopic dynamics, that enables to overcome the problem of large computational cost raised by the microscopic model.    
}

\section{Conclusion and perspectives}
{ In this article, following the biological motivation of a more realistic mechanical model~\cite{doumic2020purely} on the modelling side and the theoretical study of the localisation limit for a discrete multispecies model on the mathematical side~\cite{doumic2024multispecies}, we proposed  a size and space stochastic individual-based model. We studied its asymptotics in two successive regimes: a mean-field limit, when the number of cells tends to infinity, then a localisation limit, when the size of the interaction domain between cells vanishes. 
For this last limit, we split the difficulties and proved a convergence result in the case without growth and division: the study of the full non-conservative equation is left for future work. Due to the lack of compactness of the equation in the size variable -- at least in the absence of growth -- we also required a strictly positive diffusion term and some regularity of the interaction potential. We have also explored the connections between the models numerically in the cases for which we do not yet have  a theoretical result, including growth and division, and also with a vanishing  diffusion coefficient.  
At first sight, the macroscopic model may appear more relevant for biology than the mesoscopic limit, since in most real life applications living cells only interact with a small number of neighbours. However, being derived from the mesoscopic limit, the model contains the fact that there is interaction with an infinite number of neighbours -- though these interactions tend to localise. Up to our knowledge, the direct derivation of an adequate macroscopic model from a microscopic one remains an open problem.}

{\bf Acknowledgments.} The authors warmly thank Nicolas Fournier and Benoit Perthame for illuminating discussions. 

\bibliographystyle{plain}       
\bibliography{biblioDHHP}  

\appendix
\section{Numerical simulations}
\label{sec_nogrowthnofrag}
\subsection{No growth, no fragmentation}

{In this section, we focus on the role of spatial interactions, by desactivating the growth and fragmentation processes. In Fig. \ref{fignogrowthnofrag}, we adot the same visualization as in the numerical section of the main paper, and we show the results of the micro meso and macro models with diffusion coefficient $\Dif=0.01$ at time $t=60$ (panel A) and the relative $L^1$-errors using the mesoscopic model as reference as previously (panel B). 
\begin{figure}[H]
   \includegraphics[scale = 0.7]{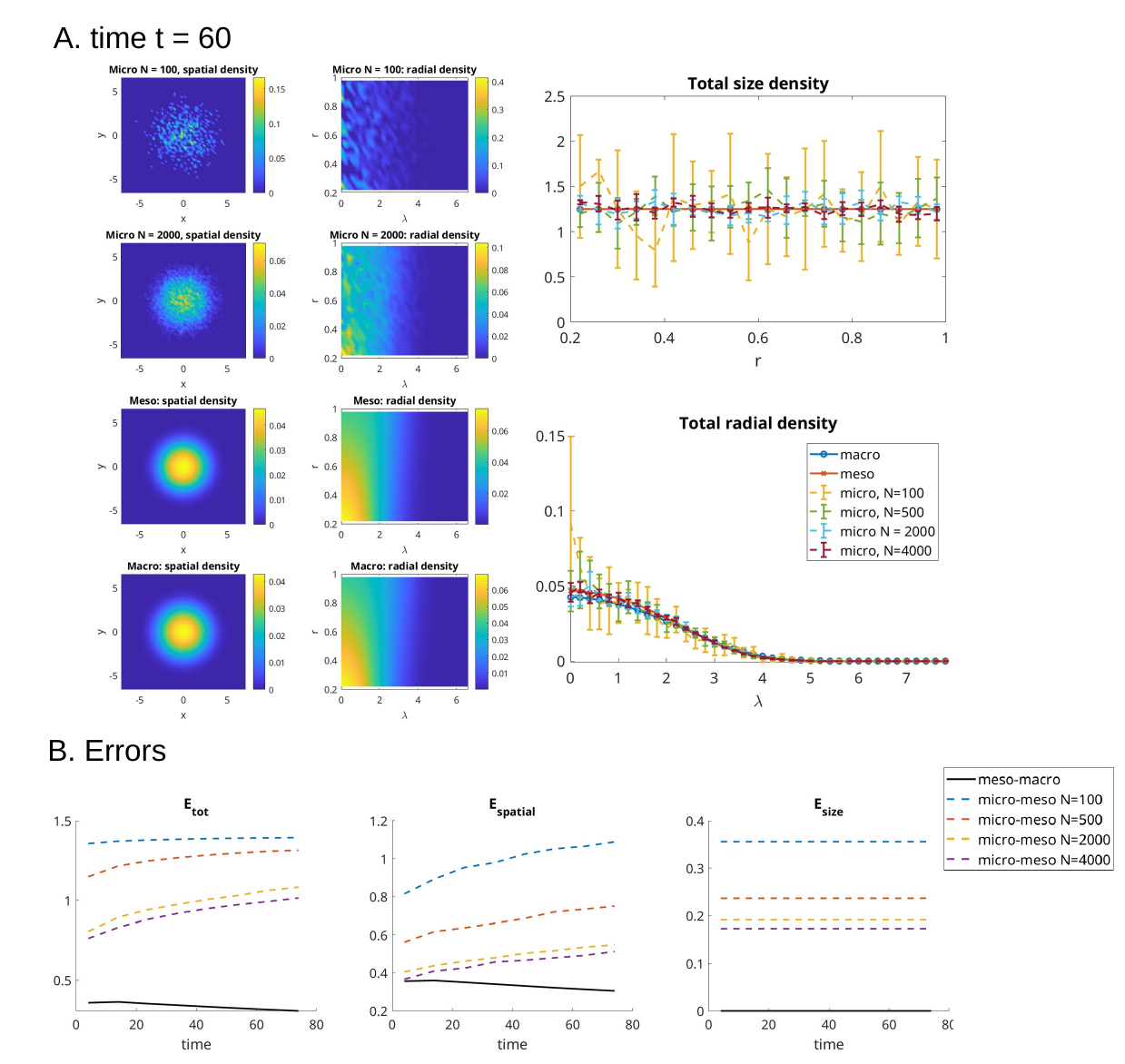}
    \vspace{-0.1cm}
    
    \caption{Panel A: Numerical simulations without particle fragmentation and without growth at time $t=60$. Panel B: $L^1$ relative errors between the three quantities of interest as function of time: $E_{tot}$ (left figure), $E_{spatial}$ (middle figure) and $E_{size}$ (right figure). Dotted lines are the relative errors between the meso and micro models for $N_0=100$ (blue curves), $N_0=500$ (yellow curves) and $N_0=2000$ (yellow curves). Black continuous lines are the relative errors between the meso and macro models.  \label{fignogrowthnofrag}}
\end{figure}

Fig. \ref{fignogrowthnofrag} panel A shows that the spatial distribution of particles spreads radially from the center (left column of panel A), as expected since the interaction kernel is spatially isotropic. Interestingly for a fixed population of cells, we observe a size-dependent spatial spreading (middle column of panel A). Indeed, small cells seem to concentrate in the middle of the domain while larger cells seem to diffuse farer from the center. This is due to our choice for the repulsion kernel $K$. Indeed, by choosing $\gamma(r) = r$ in \eqref{choiceK}, the repulsion interaction intensity between particles of sizes $r$ and $s$ is $\alpha(r,s) = rs$, meaning that pairs of large particles will interact stronger than pairs of small particles, leading to larger diffusion. Note that because the interaction kernel $K$ is of the form of spatial Gaussian functions of variance $(r+s)^2$, it also involves that larger particles will interact farer than small particles, and therefore with more particles in their vicinity. This can also lead to a larger spreading of large particles compared to small particles. Note however that this last effect can only be visible in the micro and meso setting, since they account for the non-locality of the interaction. Indeed in the localisation limit (i.e for the macro model), the Gaussian functions integrate to 1 and as a result, the interaction strength between particles of sizes $r$ and $s$ is only controlled by $\gamma(r) \gamma(s)$.  

Coming back to Fig. \ref{fignogrowthfrag} panel B, we observe that the micro, meso and macro models are in good agreement, and that the relative $L^1$-error between the micro and the meso model decreases as $N$ increases (compare the dotted lines of Fig. \ref{fignogrowthnofrag} panel B). One observes again that the spatial relative error between both models increases in time, as for the case with growth only (section \ref{numerics_case1}). These results confirm that the discrepancies between the micro and meso models are mostly controled by spatial interactions. As time goes, particles get farer from each other in the microscopic setting, disminishing the number of interactions and leading to less agreement with the mesoscopic model featuring an infinite number of particles. 

\subsection{No growth, no fragmentation -- no diffusion}
Here, we go a step further on the analysis of the role of nonlinear interactions in the link between the three models. To this aim, we perform simulations without growth, without fragmentation, and set the linear diffusion coefficient $\Dif = 0$. We adopt the same representation as previously and show in Fig. \ref{fignogrowthnofragD0} panel A the solution of the three models and in panel B the relative $L^1$-errors using the mesoscopic model as reference. 

\begin{figure}[H]
   \includegraphics[scale = 0.7]{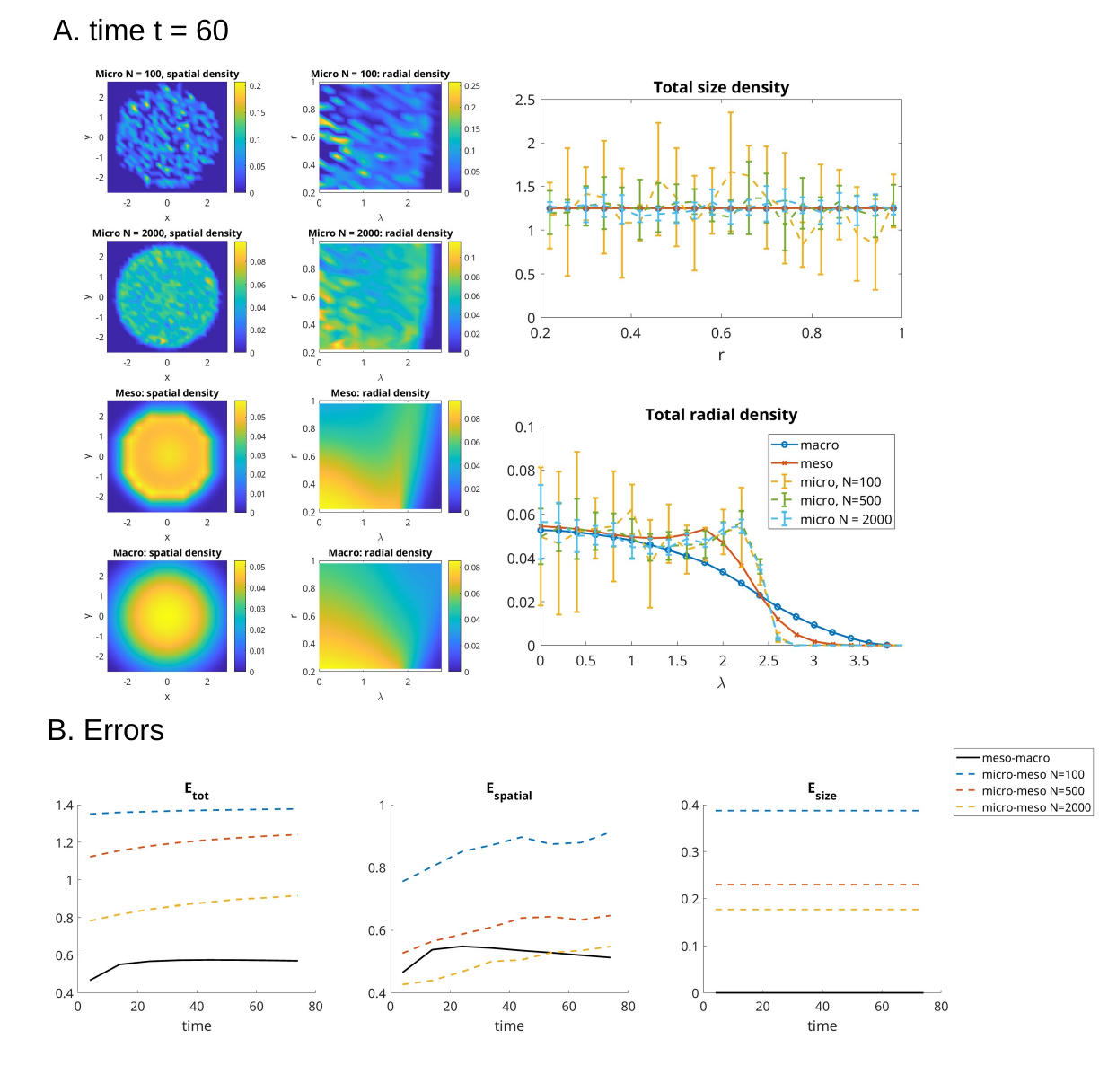}
    \vspace{-1cm}
    
    \caption{Panel A: Numerical simulations without particle fragmentation, without growth and for $\Dif = 0$, at time $t=60$. Panel B: $L^1$ relative errors between the three quantities of interest as function of time: $E_{tot}$ (left figure), $E_{spatial}$ (middle figure) and $E_{size}$ (right figure). Dotted lines are the relative errors between the meso and micro models for $N_0=100$ (blue curves), $N_0=500$ (yellow curves) and $N_0=2000$ (yellow curves). Black continuous lines are the relative errors between the meso and macro models.  \label{fignogrowthnofragD0}}
\end{figure}

As one can see in Fig. \ref{fignogrowthnofragD0}, the size sorting effect previously observed is even stronger when linear diffusion is deactivated (compare with Fig. \ref{fignogrowthnofrag}). These are expected results since linear diffusion tends to smoothen the solution. With nonlinear diffusion only, we observe a concentration of cells in rings located on the boundary of the solution spatial support (see bottom right figure of panel A) for the micro and meso models, while the radial density of the macro model remains monotically decreasing from the center. 

As one can observe in Fig. \ref{fignogrowthnofragD0} panel B, the relative error between the meso and macro models is larger with nonlinear diffusion only compared to the case with linear diffusion (compare continuous black lines of Fig. \ref{fignogrowthnofragD0} with Fig. \ref{fignogrowthnofrag} panel B). Altogether, these results illustrate the effects of nonlocal interactions and highlight the essential smoothening role of linear diffusion.

\end{document}